\newcommand{\nc}{\newcommand}
\nc{\rnc}{\renewcommand}
\numberwithin{equation}{section}
\newtheorem{theorem}{Theorem}[section]
\newcounter{abc}
\newtheorem*{thm}{\stepcounter{abc}Theorem \Alph{abc}}
\newtheorem{lemma}[theorem]{Lemma}
\newtheorem{definition}[theorem]{Definition}
\newtheorem{corollary}[theorem]{Corollary}
\newtheorem{example}[theorem]{Example}
\newtheorem{remark}[theorem]{Remark}
\newcommand{\R}{\mathbb{R}}
\newcommand{\N}{\mathbb{N}}
\newcommand{\Z}{\mathbb{Z}}
\newcommand{\To}{\rightarrow}
\newcommand{\FOC}{\mathcal{FOC}}
\newcommand{\U}{\mathcal{U}}
\newcommand{\V}{\mathcal{V}}
\newcommand{\W}{\mathcal{W}}
\newcommand{\Or}{\mathcal{O}}
\newcommand{\PO}{\mathcal{PO}_S}
\newcommand{\eps}{\epsilon}
\newcommand{\diam}{\text{diam}}
\newcommand{\ilim}{\underset{\longleftarrow}{\lim}}
\newcommand{\nempty}{\neq\emptyset}
\title{\Large\textbf{Shadowing and mixing on systems of countable group actions}
	%$\mathsection$   Email:ecchen@njnu.edu.cn; zjlin137@126.com
}
\author{
		Zijie Lin
	\thanks{School of Mathematical Sciences and Institute of Mathematics, Nanjing Normal University, Nanjing 210023, P. R. China
		(E-mail: zjlin137@126.com)},
		Ercai Chen
	\thanks{School of Mathematical Sciences and Institute of Mathematics, Nanjing Normal University, Nanjing 210023, P. R. China,
		and Center  of Nonlinear Science, Nanjing University, Nanjing 210093, P. R.China
		(E-mail: ecchen@njnu.edu.cn),
		},
		Xiaoyao Zhou
		\thanks{School of Mathematical Sciences and Institute of Mathematics, Nanjing Normal University, Nanjing 210023, P. R. China
			(E-mail: zhouxiaoyaodeyouxian@126.com),}
}
\begin{document}
	\date{}
	\maketitle
	
\begin{abstract}
	Let $(X,G,\Phi)$ be a dynamical system, where $X$ is a compact Hausdorff space, and $G$ is a countable discrete group. We investigated shadowing property and mixing between subshifts and general dynamical systems. For the shadowing property, fix some finite subset $S\subset G$. We proved that if $X$ is totally disconnected and $\Phi$ has $S$-shadowing property, then $(X,G,\Phi)$ is conjugate to an inverse limit of a sequence of shifts of finite type which satisfies Mittag-Leffler condition. Also, suppose that $X$ is a metric space (may be not totally disconnected), we proved that if $\Phi$ has $S$-shadowing property, then $(X,G,\Phi)$ is a factor of an inverse limit of a sequence of shifts of finite type by a factor map which almost lifts pseudo-orbit for $S$.
	
	On the other hand, let property $P$ be one of the following properties: transitivity, minimal, totally transitivity, weakly mixing, mixing, and specification property. We proved that if $X$ is totally disconnected, then $\Phi$ has property $P$ if and only if $(X,G,\Phi)$ is conjugate to an inverse limit of an inverse system that consists of subshifts with property $P$ which satisfies Mittag-Leffler condition. Also, for the case of metric space (may be not totally disconnected), if property $P$ is not minimal or specification property, we proved that $\Phi$ has property $P$ if and only if $(X,G,\Phi)$ is a factor of an inverse limit of a sequence of subshifts with property $P$ which satisfies Mittag-Leffler condition.
\end{abstract}
%	\noindent
%	{\bf Keywords.} .\\
	
\section{Introduction}
A \emph{dynamical system} is a triple $(X,G,\Phi)$,
where $X$ is a compact Hausdorff space,
$G$ is a countable discrete group with the identity $e_G$ and $\Phi: G\times X\To X$ satisfies that $\Phi_g:=\Phi(g,\cdot)$ is a homeomorphism,
$\Phi_{e_G}(x)=x$ and $\Phi_{gg'}(x)=\Phi_g(\Phi_{g'}(x))$ for any $g,g'\in G$ and any $x\in X$,
where $e_G$ is the identity of $G$.
A group $G$ is said to be \emph{finitely generated} if there exists a finite set $S\subset G$ such that for any $g\in G$, there exist $s_1,\dots,s_n\in S\cup S^{-1}$ such that $g=s_1\cdots s_n$. In this case, $S$ is called a finite \emph{generator} of $G$. A generator $S$ is called \emph{symmetric} if $S^{-1}\subset S$.

For a dynamical system $(X,G,\Phi)$, suppose that $X$ is a compact metric space with metric $d$.
%, and $S$ be a finite subset of $G$.
For $x\in X$ and $\eps>0$, denote $B(x,\eps)=\{y\in X:\, d(x,y)<\eps\}$.

For the case $G=\Z$, the shadowing property in dynamical systems has been studied for several years. See \cite{Pal00}, \cite{Pil99} and \cite{Pil11} for details. In \cite{Bow75}, the shadowing lemma plays a key role in the theory of shadowing, which shows that in a small neighbourhood of a hyperbolic set, the shadowing property is established. For the case $G=\Z^d$, S. Yu. Pilyugin and S. B. Tikhomirov introduce the notion of shadowing for $\Z^n$ and $\Z^n\times \R^m$ in \cite{PT03}. It is shown in \cite{Kos07} that generic $\Z^2$-actions of an interval have the periodic shadowing property and strong tolerance stability. In \cite{Opro08,Opr08}, for topologically Anosov $\Z^d$-actions that are not topologically hyperbolic in any direction, an analogue of spectral decomposition theorem is proved. The relations between shadowing property and distality for actions of $\R^n$ is studied in \cite{KK12}.

Motivated by the notion of shadowing for $\Z^n$ and $\Z^n\times \R^m$ in \cite{PT03}, the authors (\cite{OT14}) introduced the shadowing property for the finitely generated group, which is not necessarily abelian groups. They show that for a finitely generated nilpotent group, an action of the whole group has the shadowing property (and expansiveness) if the action of at least one element has the shadowing property and expansiveness.

Furthermore, shadowing property for a countable group is defined in \cite{CL18} and \cite{Mey19}. We used the definition of shadowing property for a countable group in \cite{CL18} and \cite{Mey19}, which generalize the definition for finitely generated group in \cite{Bar14} and \cite{OT14}. For $S\subset G$ and $\delta>0$, \emph{$(S,\delta)$-pseudo-orbit} of $(X,G,\Phi)$ is a sequence $\{x_g\}_{g\in G}$ in $X$ satisfying that $d(\Phi_s(x_g),x_{sg})<\delta$ holds for any $g\in G$ and $s\in S$.
For $\eps>0$, a point $z\in X$ is said to \emph{$\eps$-shadow} a sequence $\{x_g\}_{g\in G}$ in $X$ if $d(\Phi_g(z),x_g)<\eps$ for any $g\in G$.
For $S\subset G$, we say $\Phi$ has \emph{$S$-shadowing property} if for any $\eps>0$, there exists $\delta>0$ such that each $(S,\delta)$-pseudo-orbit can be $\eps$-shadowed by some point in $X$.

%Motivated by \cite{GM20}, we generalize their results to countable group actions.

In \cite{GM20}, C. Good and J. Meddaugh give a topological definition of shadowing property on a dynamical system $(X,f)$ where $X$ is compact Hausdorff space and $f:X\To X$ is a continuous map.
They prove that it is equivalent to the classic shadowing property when $X$ is a compact metric space.
For a finite open cover $\U$, a sequence $\{x_n\}_{n\in\N}$ is called a \emph{$\U$-pseudo-orbit} if for any $n\in\N$ there exists $U_n\in\U$ such that $x_{n+1},f(x_n)\in U_n$.
And a point $z\in X$ is called \emph{$\U$-shadow} a sequence $\{x_n\}_{n\in\N}$ if for any $n\in\N$ there exists $U_n\in\U$ such that $f^n(z),x_n\in U_n$.
The map $f$ is said to have the \emph{shadowing property} if for any finite open cover $\U$ of $X$, there exists finite open cover $\V$ such that each $\V$-pseudo-orbit can be $\U$-shadowed by some point in $X$.
C. Good and J. Meddaugh prove the following results in \cite{GM20}.
\begin{thm}[\cite{GM20}]
	Let $f:X\To X$ be a continuous map where $X$ is a compact totally disconnected Hausdorff space. Then $f$ has shadowing property if and only if it is conjugate to an inverse limit of an ML inverse system consisting of shifts of finite type.
\end{thm}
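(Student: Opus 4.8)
The plan is to treat the two implications separately, the ``inverse limit $\Rightarrow$ shadowing'' direction being the softer one. For it I would combine three facts. (i) Every one-sided shift of finite type has the shadowing property: if $\Sigma\subseteq A^{\N}$ has forbidden blocks of length $\le M$, then the cover of $\Sigma$ by cylinders of length $M$ is cofinal among its open covers, and a pseudo-orbit for such a cover is exactly a coordinatewise list of overlapping admissible $M$-blocks, so it agrees coordinatewise with a genuine point of $\Sigma$, which then shadows it. (ii) The topological shadowing property of \cite{GM20} is invariant under conjugacy, since a conjugacy transports open covers to open covers and pseudo-orbits to pseudo-orbits. (iii) The inverse limit of an ML inverse system $\{(X_i,f_i),p_{ij}\}$ of systems with shadowing has shadowing; this is the only place the ML hypothesis is used. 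For (iii): given a target cover of $\ilim X_i$, refine it to $p_{i\infty}^{-1}(\U_i)$ for some $i$; by ML fix $j\ge i$ with $p_{ij'}(X_{j'})=p_{i\infty}(\ilim X_i)$ for all $j'\ge j$; apply shadowing in $X_j$ with target $p_{ij}^{-1}(\U_i)$ to obtain a witness cover $\V_j$, and use $p_{j\infty}^{-1}(\V_j)$ upstairs. A $p_{j\infty}^{-1}(\V_j)$-pseudo-orbit projects to a $\V_j$-pseudo-orbit in $X_j$, which is shadowed by some $z_j$; then $p_{ij}(z_j)\in p_{ij}(X_j)=p_{i\infty}(\ilim X_i)$ lifts to a point of the inverse limit which, by equivariance of the projections, shadows the original orbit with respect to $\U_i$ and hence with respect to the given cover.

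For the converse I would construct the system explicitly. As $X$ is compact, totally disconnected and Hausdorff, the finite clopen partitions $\mathcal P$ are cofinal among its open covers and separate its points. Given such a $\mathcal P$, let $\Sigma_{\mathcal P}\subseteq\mathcal P^{\N}$ be the memory-one SFT allowing a transition $P\to Q$ iff $f(P)\cap Q\nempty$ (finitely many forbidden words of length two, so genuinely an SFT). The $\mathcal P$-itinerary map $x\mapsto(\text{the }\mathcal P\text{-atom of }f^{n}(x))_{n}$ is a factor map of $(X,f)$ onto a subshift $X_{\mathcal P}\subseteq\Sigma_{\mathcal P}$, and refining $\mathcal P$ to $\mathcal Q$ yields shift-commuting coarsening maps $\Sigma_{\mathcal Q}\to\Sigma_{\mathcal P}$ restricting to the (surjective) maps $X_{\mathcal Q}\to X_{\mathcal P}$; since the partitions separate points, $\ilim X_{\mathcal P}\cong(X,f)$.

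It then remains to show that the system $\{\Sigma_{\mathcal P}\}$ is ML and still has $(X,f)$ as its inverse limit, and this is where shadowing is invoked. Fix $\mathcal P$; by shadowing choose an open cover $\V$ so that every $\V$-pseudo-orbit is $\mathcal P$-shadowed, and let $\mathcal Q$ be a finite clopen partition refining both $\V$ and $\mathcal P$. For $\sigma=(a_n)_n\in\Sigma_{\mathcal Q}$, pick for each $n$ a point $y_n\in f(a_n)\cap a_{n+1}$ and an $x_n\in a_n$ with $f(x_n)=y_n$; then $f(x_n),x_{n+1}\in a_{n+1}$, so $(x_n)_n$ is a $\mathcal Q$-pseudo-orbit, hence a $\V$-pseudo-orbit, hence $\mathcal P$-shadowed by some $z$, meaning $f^{n}(z)$ lies in the $\mathcal P$-atom containing $x_n$, i.e. in the $\mathcal P$-atom containing $a_n$. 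Thus the coarsening of $\sigma$ to $\mathcal P$ equals the $\mathcal P$-itinerary of $z$, so it lies in $X_{\mathcal P}$; since $X_{\mathcal P}$ is already contained in the image of $\Sigma_{\mathcal Q'}\to\Sigma_{\mathcal P}$ for every $\mathcal Q'\ge\mathcal P$, all these images equal $X_{\mathcal P}$, which is the ML condition and forces $\ilim\Sigma_{\mathcal P}=\ilim X_{\mathcal P}\cong(X,f)$. I expect this last step to be the main obstacle: manufacturing an honest pseudo-orbit out of an abstract admissible word of $\Sigma_{\mathcal Q}$, and choosing $\mathcal Q$ fine enough that shadowing reconstructs the itinerary \emph{exactly} — here it matters that a partition is disjoint, so ``$f^{n}(z)$ and $x_n$ share an atom'' really says $f^{n}(z)\in a_n$ — all while keeping the argument in the language of open covers so that it remains valid for compact Hausdorff (not metric) $X$.
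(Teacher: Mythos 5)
Your proof is correct and follows essentially the same route as the paper's machinery in Sections 3--4 (specialized to the $\N$-case): your $\Sigma_{\mathcal P}$ and $X_{\mathcal P}$ are exactly the pseudo-orbit space $\PO(\mathcal P)$ and orbit space $\Or(\mathcal P)$, your coarsening maps are the paper's $\iota$, your ML argument is Lemma \ref{l:3.4} combined with Theorem \ref{t:4.4}, and your converse direction is Theorem \ref{t:3.4}. The only quibble is the phrasing that a single cover by length-$M$ cylinders is ``cofinal''; what you need (and clearly intend) is that the family of covers by cylinders of length $M'\ge M$ is cofinal, after which the splicing argument goes through unchanged.
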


\begin{thm}[\cite{GM20}]
	Let $f:X\To X$ be a continuous map where $X$ is compact metric space. If $f$ has shadowing property, then it is a factor of an inverse limit of a sequence of shifts of finite type.
\end{thm}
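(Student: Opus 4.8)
The plan is to realize $X$ as a factor of an inverse limit of (one-sided) shifts of finite type built from a refining sequence of finite open covers. Since $X$ is a compact metric space, I would fix finite open covers $\U_1,\U_2,\dots$ that generate the topology, with the diameters of the members of $\U_n$ tending to $0$ and each $\U_{n+1}$ refining $\U_n$. Using the shadowing property I would arrange in addition that every $\U_{n+1}$-pseudo-orbit is $\U_n$-shadowed by a point of $X$: having chosen $\U_n$, take the tolerance cover $\W_n$ that shadowing provides at resolution $\U_n$, and let $\U_{n+1}$ refine both $\W_n$ and an arbitrarily fine cover. I would also build $\U_{n+1}$ by subdividing each member of $\U_n$ separately, so that a fixed refinement assignment $r_n$ (sending each member of $\U_{n+1}$ to a member of $\U_n$ containing it) can be chosen with the extra property that every member of $\U_n$ genuinely contains members of $\U_{n+1}$; this is needed for surjectivity later.

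Next, to each $n$ I attach a shift of finite type: writing $\U_n=\{U^n_1,\dots,U^n_{k_n}\}$, take $\{1,\dots,k_n\}$ as alphabet, declare $(i,j)$ admissible when $f(U^n_i)\cap U^n_j\nempty$, and let $\Sigma_n\subseteq\{1,\dots,k_n\}^{\N}$ be the resulting subshift of finite type with shift $\sigma_n$. For $m>n$, composing refinement assignments gives $r\colon\{1,\dots,k_m\}\To\{1,\dots,k_n\}$ with $U^m_i\subseteq U^n_{r(i)}$; since admissibility of $(i,j)$ in $\Sigma_m$ forces $f(U^n_{r(i)})\cap U^n_{r(j)}\supseteq f(U^m_i)\cap U^m_j\nempty$, the map induced coordinatewise by $r$ is a factor map $\pi^m_n\colon\Sigma_m\To\Sigma_n$ commuting with shifts, so $\{\Sigma_n,\pi^m_n\}$ is an inverse system; set $\Sigma_\infty=\ilim\Sigma_n$ with the product shift $\sigma$. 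The key object is a factor map $\pi\colon\Sigma_\infty\To X$: for $\xi=(\xi_n)_n\in\Sigma_\infty$, $\xi_n=(i^{(n)}_k)_k$, admissibility at level $n$ yields points $w^{(n)}_k\in U^n_{i^{(n)}_k}$ with $f(w^{(n)}_k)\in U^n_{i^{(n)}_{k+1}}$, so $(w^{(n)}_k)_k$ is a $\U_n$-pseudo-orbit and is $\U_{n-1}$-shadowed by some $z_n\in X$; compatibility $r(i^{(n)}_k)=i^{(n-1)}_k$ makes the orbits of the $z_n$ agree through finer covers, so $(z_n)_n$ is Cauchy and I set $\pi(\xi)=\lim_n z_n$. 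One then verifies that $\pi$ is well defined (independent of the $w^{(n)}_k$ and of the shadowing points, since $\diam(\U_n)\to0$ forces $f^k(\pi(\xi))\in\overline{U^n_{i^{(n)}_k}}$ for all $n,k$), continuous (sequences agreeing on long initial blocks at level $n$ have images sharing a member of $\U_n$), equivariant ($\pi\circ\sigma=f\circ\pi$, since shifting $\xi$ shifts all the pseudo-orbits), and surjective (for $x\in X$ the controlled refinement structure lets one choose compatible itineraries $i^{(n)}_k$ with $f^k(x)\in U^n_{i^{(n)}_k}$, giving $\xi$ with $\pi(\xi)=x$); hence $X=\pi(\Sigma_\infty)$ is a factor of $\ilim\Sigma_n$.

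The step I expect to be the main obstacle is the well-definedness and continuity of $\pi$: shadowing points are highly non-unique, so there is no canonical choice at any single finite level, and the argument works only because the whole tower of covers, together with $\diam(\U_n)\to0$, pins all admissible choices down to one limit point and pushes images of nearby sequences into a common small cover element. A secondary but bookkeeping-heavy point is verifying that the transition relation really defines a shift of finite type with bonding maps that are genuine factor maps, and that the covers can be taken simultaneously arbitrarily fine, refining the shadowing-tolerance covers, and with surjective refinement assignments; this is where uniform continuity of $f$ and the explicit subdivision construction of the $\U_n$ are used.
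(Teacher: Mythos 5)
Your proposal is correct, and it reaches the same destination as the paper's argument (the proof of Theorem \ref{t:1.2} in Section 4.2, which is the group-action version of this statement) using the same raw materials: a tower of finite open covers $\{\U_n\}$ with $\diam(\U_n)\To 0$, each level witnessing shadowing for the previous one, and transition SFTs whose alphabet is $\U_n$ and whose admissible pairs are those with $f(U)\cap U'\nempty$ --- these are exactly the pseudo-orbit spaces $\PO(\U_n)$ of the paper. The genuine difference is \emph{where the shadowing property is spent}. The paper spends it on the bonding maps: condition (3) of its construction ($st(U,\U_{n+1})\subset W$ for some $W\in\U_n$) lets it define $\omega(\{U_g\})=\{W(U_g)\}$ and prove $\omega(\PO(\U_{n+2}))\subset\Or(\U_n)$, i.e.\ every pseudo-orbit pattern two levels down is carried to a genuine orbit pattern; after that the projection onto $X$ is the canonical nested-intersection map $\{u_i\}\mapsto\bigcap_i\overline{\pi_{e_G}(u_i)}$ and needs no further use of shadowing. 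You instead keep naive refinement maps as bonding maps and spend shadowing on the projection $\pi$ itself, defining it as a limit of shadowing points; this forces the Cauchy/independence-of-choices argument that you correctly single out as the main obstacle, and your observation that $f^k(\pi(\xi))$ is pinned into the closed $\diam(\U_{n-1})$-neighbourhood of $U^{n-1}_{i^{(n-1)}_k}$, which shrinks to a point, is exactly what closes it. Each approach buys something: the paper's star-refinement trick makes the final factor map transparent and exhibits the intermediate orbit spaces $\Or(\U_n)$, which it reuses for the transitivity and mixing results; yours avoids introducing orbit spaces altogether and keeps the inverse system combinatorially simpler. One small overstatement: your coordinatewise maps $\pi^m_n$ need not be surjective (an admissible level-$n$ word need not lift to level $m$), so they are bonding maps rather than factor maps --- but the statement only asks that $X$ be a factor of the inverse limit, and your surjectivity proof for $\pi$ constructs compatible itineraries directly, so this costs you nothing.
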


Naturally, we consider whether their results hold for the case of group actions.
We also give a topological definition of $S$-shadowing property and prove the following results.

\begin{theorem}\label{t:1.1}
	Suppose that $(X,G,\Phi)$ is a dynamical system, where $X$ is compact totally disconnected Hausdorff space. Let $S$ be a finite subset of $G$. If $\Phi$ has $S$-shadowing property, then $(X,G,\Phi)$ is conjugate to the inverse limit of an ML inverse system of shifts of finite type over $S\cup \{e_G\}$. And if $(X,G,\Phi)$ is conjugate to the inverse limit of an ML inverse system of subshifts which have $S$-shadowing property, then $\Phi$ has $S$-shadowing property.
\end{theorem}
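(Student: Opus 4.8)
The proof splits into two implications; the forward one carries the weight. For it, I would pass to the inverse limit of itinerary subshifts and use $S$-shadowing only to supply the shift-of-finite-type approximations and the Mittag--Leffler (ML) condition. Concretely: since $X$ is compact, totally disconnected and Hausdorff, the finite clopen partitions of $X$, ordered by refinement, form a directed set $\Lambda$ which is cofinal among all finite clopen covers. For $\mathcal{P}\in\Lambda$ let $\iota_{\mathcal{P}}\colon X\To\mathcal{P}^{G}$ be the itinerary map, $\iota_{\mathcal{P}}(x)(g)=$ the cell of $\mathcal{P}$ containing $\Phi_{g}(x)$. Since the cells are clopen, $\iota_{\mathcal{P}}$ is continuous, and it intertwines $\Phi$ with the natural shift action on $\mathcal{P}^{G}$, so $X_{\mathcal{P}}:=\iota_{\mathcal{P}}(X)$ is a subshift of $\mathcal{P}^{G}$. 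If $\mathcal{Q}$ refines $\mathcal{P}$, the $1$-block map $\mathcal{Q}\To\mathcal{P}$ sending a $\mathcal{Q}$-cell to the $\mathcal{P}$-cell containing it induces an equivariant factor map $\pi_{\mathcal{Q}\mathcal{P}}\colon X_{\mathcal{Q}}\To X_{\mathcal{P}}$ with $\pi_{\mathcal{Q}\mathcal{P}}\circ\iota_{\mathcal{Q}}=\iota_{\mathcal{P}}$. As clopen sets separate points of $X$, a finite-intersection-property argument shows that the $\iota_{\mathcal{P}}$ assemble into an equivariant homeomorphism $X\cong\ilim(X_{\mathcal{P}},\pi_{\mathcal{Q}\mathcal{P}})_{\mathcal{P}\in\Lambda}$; this step uses only compactness, not shadowing.

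Next, for $\mathcal{P}\in\Lambda$ let $W_{\mathcal{P}}$ be the finite set of patterns on $S\cup\{e_{G}\}$ that occur in $X_{\mathcal{P}}$, and let $\Sigma_{\mathcal{P}}\subseteq\mathcal{P}^{G}$ be the shift of finite type over $S\cup\{e_{G}\}$ consisting of those configurations all of whose $(S\cup\{e_{G}\})$-windows lie in $W_{\mathcal{P}}$. Then $X_{\mathcal{P}}\subseteq\Sigma_{\mathcal{P}}$, and for $\mathcal{Q}$ refining $\mathcal{P}$ the coarsening $1$-block map sends $\Sigma_{\mathcal{Q}}$ into $\Sigma_{\mathcal{P}}$ (the image of an occurring window occurs), so $(\Sigma_{\mathcal{P}},\pi_{\mathcal{Q}\mathcal{P}})_{\mathcal{P}\in\Lambda}$ is an inverse system of shifts of finite type over $S\cup\{e_{G}\}$ containing $(X_{\mathcal{P}},\pi_{\mathcal{Q}\mathcal{P}})$. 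The role of $S$-shadowing is to show: for every $\mathcal{P}$ there is $\mathcal{Q}=\mathcal{Q}(\mathcal{P})\in\Lambda$, $\mathcal{Q}\ge\mathcal{P}$, with $\pi_{\mathcal{Q}\mathcal{P}}(\Sigma_{\mathcal{Q}})=X_{\mathcal{P}}$. To see this, apply the topological $S$-shadowing property to the clopen cover $\mathcal{P}$ to obtain a finite open cover $\mathcal{V}$, and pick $\mathcal{Q}\in\Lambda$ refining both $\mathcal{P}$ and $\mathcal{V}$. Given $\xi\in\Sigma_{\mathcal{Q}}$, for each $g\in G$ the window of $\xi$ around $g$ occurs in $X_{\mathcal{Q}}$, so there is $y_{g}\in X$ whose $\mathcal{Q}$-itinerary matches $\xi$ on that window; putting $x_{g}:=y_{g}$, the matchings force $\Phi_{s}(x_{g})$ and $x_{sg}$ into the common $\mathcal{Q}$-cell $\xi(sg)$ for $s\in S$, hence into a common member of $\mathcal{V}$, so $\{x_{g}\}_{g\in G}$ is a $(\mathcal{V},S)$-pseudo-orbit. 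By $S$-shadowing some $z\in X$ $\mathcal{P}$-shadows $\{x_{g}\}$, and reading off $\mathcal{P}$-cells forces $\iota_{\mathcal{P}}(z)=\pi_{\mathcal{Q}\mathcal{P}}(\xi)$, whence $\pi_{\mathcal{Q}\mathcal{P}}(\xi)\in X_{\mathcal{P}}$; the reverse inclusion is trivial. Consequently, for every $\mathcal{R}\ge\mathcal{Q}(\mathcal{P})$ one has $X_{\mathcal{P}}=\pi_{\mathcal{R}\mathcal{P}}(X_{\mathcal{R}})\subseteq\pi_{\mathcal{R}\mathcal{P}}(\Sigma_{\mathcal{R}})\subseteq\pi_{\mathcal{Q}\mathcal{P}}(\Sigma_{\mathcal{Q}})=X_{\mathcal{P}}$, so the images into $X_{\mathcal{P}}$ stabilise and the system $(\Sigma_{\mathcal{P}},\pi_{\mathcal{Q}\mathcal{P}})$ satisfies the ML condition. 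Finally, any thread $(\xi_{\mathcal{P}})\in\ilim\Sigma_{\mathcal{P}}$ has $\xi_{\mathcal{P}}=\pi_{\mathcal{Q}(\mathcal{P})\mathcal{P}}(\xi_{\mathcal{Q}(\mathcal{P})})\in X_{\mathcal{P}}$ for all $\mathcal{P}$, so $\ilim\Sigma_{\mathcal{P}}=\ilim X_{\mathcal{P}}\cong X$ equivariantly, proving the first assertion. I expect the main obstacle here to be the bookkeeping in the pseudo-orbit construction — choosing the itinerary and window conventions so that the window shape $S\cup\{e_{G}\}$ matches the pseudo-orbit relation $\Phi_{s}(x_{g})\approx x_{sg}$, with the non-abelian group acting throughout; the rest is standard inverse-limit manipulation.

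For the converse it suffices to prove the general fact, analogous to the corresponding lemma of Good and Meddaugh, that the inverse limit of an ML inverse system $(X_{i},\pi_{ij})$ of dynamical systems each having $S$-shadowing again has $S$-shadowing (the subshift hypothesis is not needed). Let $X=\ilim X_{i}$ with projections $\pi_{i}$, and let $\mathcal{U}$ be a finite open cover of $X$; it is refined by $\pi_{i_{0}}^{-1}(\mathcal{U}_{0})$ for some index $i_{0}$ and finite open cover $\mathcal{U}_{0}$ of $X_{i_{0}}$. By the ML condition choose $i_{1}\ge i_{0}$ so that the image of $X_{j}$ in $X_{i_{0}}$ is independent of $j\ge i_{1}$ and hence equals $\pi_{i_{0}}(X)$. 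Apply the $S$-shadowing of $X_{i_{1}}$ to the cover $\pi_{i_{1}i_{0}}^{-1}(\mathcal{U}_{0})$ of $X_{i_{1}}$ to get a finite open cover $\mathcal{V}_{1}$, and set $\mathcal{V}:=\pi_{i_{1}}^{-1}(\mathcal{V}_{1})$. If $\{x_{g}\}$ is a $(\mathcal{V},S)$-pseudo-orbit in $X$, then $\{\pi_{i_{1}}(x_{g})\}$ is a $(\mathcal{V}_{1},S)$-pseudo-orbit in $X_{i_{1}}$, hence is $\pi_{i_{1}i_{0}}^{-1}(\mathcal{U}_{0})$-shadowed by some $w_{1}\in X_{i_{1}}$; its image $w_{0}:=\pi_{i_{1}i_{0}}(w_{1})$ lies in $\pi_{i_{0}}(X)$, so $w_{0}=\pi_{i_{0}}(z)$ for some $z\in X$. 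Chasing the definitions through $\pi_{i_{1}i_{0}}$ and using $\pi_{i_{0}}\circ\Phi_{g}=\Phi_{g}\circ\pi_{i_{0}}$ on each level, one finds that $\Phi_{g}(z)$ and $x_{g}$ lie in a common member of $\pi_{i_{0}}^{-1}(\mathcal{U}_{0})$ for every $g$, hence in a common member of $\mathcal{U}$; thus $z$ $\mathcal{U}$-shadows $\{x_{g}\}$. Therefore $\Phi$ has $S$-shadowing, and the theorem follows.
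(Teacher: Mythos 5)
Your proposal is correct and follows essentially the same route as the paper: your itinerary subshifts $X_{\mathcal{P}}$ are the paper's orbit spaces $\Or(\mathcal{P})$ (Corollary \ref{c:4.2}), your $\Sigma_{\mathcal{P}}$ coincides with the pseudo-orbit space $\PO(\mathcal{P})$ of Lemma \ref{l:3.6}, your stabilization step $\pi_{\mathcal{Q}\mathcal{P}}(\Sigma_{\mathcal{Q}})=X_{\mathcal{P}}$ is Lemma \ref{l:3.4} combined with Theorem \ref{t:4.4}, and your converse is exactly Theorem \ref{t:3.4}. The only cosmetic difference is that you identify the two inverse limits by comparing threads directly rather than by exhibiting the explicit inverse $\psi$ built from the monotone function $p$.
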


If $G$ is a finitely generated group and $S$ is a finite generator of $G$, then it is proved in \cite[Theorem 3.2]{CL18} that a subshift is shift of finite type if and only if it has $S$-shadowing property. Thus we have the following corollary.

\begin{corollary}
	Suppose that $(X,G,\Phi)$ is a dynamical system, where $X$ is compact totally disconnected Hausdorff space and $G$ is a finitely generated group. Let $S$ be a finite generator of $G$. Then $\Phi$ has $S$-shadowing property if and only if $(X,G,\Phi)$ is conjugate to the inverse limit of an ML inverse system of shifts of finite type.
\end{corollary}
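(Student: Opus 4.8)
The plan is to obtain the corollary directly by combining Theorem~\ref{t:1.1} with the characterization of shifts of finite type via $S$-shadowing from \cite[Theorem 3.2]{CL18}, the crucial extra input being that here $S$ is a finite \emph{generator} of $G$.

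For the ``only if'' direction, assume $\Phi$ has $S$-shadowing property. The first assertion of Theorem~\ref{t:1.1} then provides a conjugacy between $(X,G,\Phi)$ and the inverse limit of an ML inverse system whose terms are shifts of finite type over $S\cup\{e_G\}$. It remains only to observe that a shift of finite type over the prescribed finite window $S\cup\{e_G\}$ is in particular a shift of finite type (a subshift cut out by finitely many forbidden patterns), so the conjugacy already witnesses the representation demanded by the corollary.

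For the ``if'' direction, assume $(X,G,\Phi)$ is conjugate to the inverse limit of an ML inverse system whose terms are shifts of finite type. Since $S$ is a finite generator of $G$, \cite[Theorem 3.2]{CL18} applies to each term and shows that every such shift of finite type has the $S$-shadowing property. Hence the inverse system is an ML inverse system of subshifts all of which have $S$-shadowing property, and the second assertion of Theorem~\ref{t:1.1} yields that $\Phi$ has $S$-shadowing property.

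Since both implications are obtained by invoking results already in hand, no genuinely new argument is required; the only points needing (minor) care are the bookkeeping identification that ``shift of finite type over $S\cup\{e_G\}$'' is a special case of ``shift of finite type'', so that the output of Theorem~\ref{t:1.1} is literally of the form asserted here, and, in the converse direction, the observation that the generating hypothesis on $S$ is precisely what licenses the application of \cite[Theorem 3.2]{CL18}. Thus whatever obstacle there is, is definitional rather than mathematical.
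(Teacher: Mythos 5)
Your proposal is correct and matches the paper's own (implicit) argument exactly: the paper derives this corollary by combining Theorem~\ref{t:1.1} with the characterization from \cite[Theorem 3.2]{CL18} that, for a finite generator $S$, a subshift is a shift of finite type if and only if it has $S$-shadowing property. Both directions are handled just as you describe, so no further comment is needed.
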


Also, for the metric case, we prove the following result.

\begin{theorem}\label{t:1.2}
	Let $(X,G,\Phi)$ be a dynamical system, where $X$ is compact metric systems, and $S$ be a finite subset of $G$. If $\Phi$ has $S$-shadowing property, then there exists an inverse system $(\phi^{n+1}_n,(X_n,G,\sigma))$ consisting of shifts of finite type over $S\cup \{e_G\}$ such that $(X,G,\Phi)$ is a factor of $(\ilim\{\phi^{n+1}_n,X_n\},G,\sigma^*)$.
\end{theorem}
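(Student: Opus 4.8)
The plan is to realise $(X,G,\Phi)$ as a topological quotient of a symbolic inverse limit built from a shrinking sequence of finite open covers, with the $S$-shadowing property supplying the genuine points of $X$ that turn symbolic pseudo-orbits into the values of the quotient map. First I would fix the data. Choose $\eps_k\searrow 0$; by $S$-shadowing pick $\delta_k>0$, decreasing with $\delta_k<\eps_k$, so that every $(S,\delta_k)$-pseudo-orbit is $\eps_k$-shadowed by some point of $X$. Using uniform continuity of the finitely many homeomorphisms $\{\Phi_s:s\in S\}$, shrink to $\gamma_k\le\delta_k/2$ so that $d(x,y)<\gamma_k$ forces $d(\Phi_s x,\Phi_s y)<\delta_k/2$ for all $s\in S$. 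Finally choose finite open covers $\mathcal{V}_k$ of $X$ with $\mathrm{mesh}(\mathcal{V}_k)<\gamma_k$ and with $\mathcal{V}_{k+1}$ refining $\mathcal{V}_k$, and fix refinement maps $r_k:\mathcal{V}_{k+1}\to\mathcal{V}_k$ with $V\subseteq r_k(V)$ for every $V\in\mathcal{V}_{k+1}$.

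Next I would define the symbolic models. Let $X_k\subseteq\mathcal{V}_k^{\,G}$ be the subshift consisting of those $\omega$ with $\Phi_s(\omega(g))\cap\omega(sg)\neq\emptyset$ for all $g\in G$ and $s\in S$; since each constraint involves only the coordinates indexed by $(S\cup\{e_G\})g$, this is a shift of finite type whose defining pattern set lives on $S\cup\{e_G\}$. It is nonempty: for any $x\in X$ the assignment $g\mapsto$ a member of $\mathcal{V}_k$ containing $\Phi_g(x)$ lies in $X_k$ (indeed every genuine orbit lifts). The map $\phi^{k+1}_k(\omega):=r_k\circ\omega$ sends $X_{k+1}$ into $X_k$, because enlarging entries only makes the intersection conditions easier, and it is continuous and shift-equivariant; this produces an inverse system $(\phi^{k+1}_k,(X_k,G,\sigma))$ of shifts of finite type over $S\cup\{e_G\}$.

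Then I would construct the factor map $\pi:(\ilim\{\phi^{k+1}_k,X_k\},G,\sigma^*)\to(X,G,\Phi)$. Given $\bar\omega=(\omega_k)_k$, choose arbitrary $x^{(k)}_g\in\omega_k(g)$ for each $g$ and $k$; the intersection conditions together with the mesh and uniform-continuity choices force $\{x^{(k)}_g\}_{g\in G}$ to be an $(S,\delta_k)$-pseudo-orbit, so by $S$-shadowing there is $z_k\in X$ with $d(\Phi_g z_k,x^{(k)}_g)<\eps_k$ for all $g$. From the coherence $\omega_m(e_G)\subseteq\omega_k(e_G)$ for $m\ge k$ (a consequence of $\omega_k=r_k\circ\cdots\circ r_{m-1}\circ\omega_m$) and $\mathrm{mesh}(\mathcal{V}_k)\to 0$, the sequence $(z_k)$ is Cauchy, its limit is independent of all the choices made (two admissible choices yield $z_k$'s within $3\eps_k$ of one another), so $\pi(\bar\omega):=\lim_k z_k$ is a well-defined continuous map; it is equivariant because $\Phi_h z_k$ still $\eps_k$-shadows the point-choice associated with $\sigma^*_h\bar\omega$ at level $k$. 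For surjectivity, fix $x\in X$ and set $L_k=\{\omega\in X_k:\Phi_g(x)\in\omega(g)\text{ for all }g\in G\}$; each $L_k$ is a nonempty compact subset of $X_k$ with $\phi^{k+1}_k(L_{k+1})\subseteq L_k$, hence $\ilim\{L_k\}\neq\emptyset$, and for any $\bar\omega$ in it one may take $x^{(k)}_g=\Phi_g(x)$ and $z_k=x$, so $\pi(\bar\omega)=x$.

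The purely routine part is the $\eps$-$\delta$ bookkeeping. The step I expect to need the most care is pinning down the symbolic model so that its transition constraints genuinely involve only the coordinates indexed by translates of $S\cup\{e_G\}$ (so that each $X_k$ really is an SFT over $S\cup\{e_G\}$ in the required sense), while at the same time being strong enough that every symbolic pseudo-orbit of $X_k$ is an honest $(S,\delta_k)$-pseudo-orbit of $X$; this is exactly where the relation between $\mathrm{mesh}(\mathcal{V}_k)$, the moduli of continuity of the maps $\Phi_s$ ($s\in S$), and the shadowing constant $\delta_k$ must be calibrated, together with the correct (possibly non-abelian) indexing conventions for the shift.
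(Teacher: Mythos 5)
Your proposal is correct, and it follows the paper's overall strategy (a refining sequence of finite open covers, symbolic pseudo-orbit spaces realized as shifts of finite type over $S\cup\{e_G\}$, and shadowing supplying the factor map), but the implementation genuinely differs in two places. First, your symbolic model imposes only the pairwise condition $\Phi_s(\omega(g))\cap\omega(sg)\nempty$ for each $s\in S$, whereas the paper's $\PO(\U)$ demands a common witness, $U_g\cap\bigcap_{s\in S}\Phi_{s^{-1}}(U_{sg})\nempty$; the paper explicitly remarks after Lemma \ref{l:3.6} that the pairwise set is strictly larger and that its elements need not carry any exact pseudo-orbit. Your calibration rescues this: because $\mathrm{diam}(\V_k)<\gamma_k$ and $\gamma_k$ is chosen from the uniform continuity of the finitely many maps $\Phi_s$, an arbitrary selection $x_g^{(k)}\in\omega_k(g)$ still satisfies $d(\Phi_s(x_g^{(k)}),x_{sg}^{(k)})<\delta_k$, which is all the metric shadowing hypothesis requires, so the weaker (and still finite-type, over $S\cup\{e_G\}$) constraint suffices. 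Second, you construct the factor map in one step as a Cauchy limit of tracing points (using $\omega_m(g)\subseteq\omega_k(g)$ for $m\ge k$ to get $d(z_k,z_m)\le\eps_k+\gamma_k+\eps_m$), whereas the paper factors through orbit spaces: it uses a star-refinement condition to build $\omega:\PO(\U_{n+2})\To\Or(\U_n)$, shows the induced map on inverse limits is onto $\ilim\{\iota'',\Or(\U_{2i})\}$, and then maps that onto $X$ by intersecting closures of the $e_G$-entries. Your route is more elementary and purely metric; the paper's cover-theoretic route parallels its totally disconnected argument and exhibits the intermediate orbit-space model. Both establish the theorem as stated.
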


Inspired by the results that systems with shadowing property can be represented by shifts of finite type which are subshifts with shadowing property, we consider other dynamical properties that will have similar results.
So we get the following theorems, which show that subshifts can be seen as fundamental objects in the theory of mixing.
%Moreover, we also investigate other dynamical properties which has similar results.

\begin{theorem}\label{t:1.3}
	Suppose that $(X,G,\Phi)$ is a dynamical system, where $X$ is a compact totally disconnected Hausdorff space. Let property $P$ be one of the following properties: transitivity, minimal,  totally transitivity, weakly mixing, mixing, and specification property. Then $\Phi$ has property $P$ if and only if $(X,G,\Phi)$ is conjugate to the inverse limit of an ML inverse system of subshifts with property $P$.
\end{theorem}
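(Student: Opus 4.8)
The plan is to establish the two implications separately. For the direction ``$\Phi$ has property $P$ $\Rightarrow$ symbolic representation'', I would attach to $(X,G,\Phi)$ its canonical inverse system of symbolic factors coming from the clopen partitions of $X$; for the converse, I would show that each of the six properties is inherited both by factor systems and by inverse limits with surjective bonding maps, the Mittag--Leffler condition being precisely the hypothesis that reduces a general inverse limit to the surjective case.

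First, for the forward direction: since $X$ is compact Hausdorff and totally disconnected, its finite clopen partitions form a directed set under refinement and separate the points of $X$. Given such a partition $\mathcal P=\{P_1,\dots,P_k\}$, I would define the itinerary map $\iota_{\mathcal P}\colon X\to\{1,\dots,k\}^{G}$ by letting $\iota_{\mathcal P}(x)(g)$ be the index $i$ with $\Phi_g(x)\in P_i$. Each coordinate is locally constant, being determined by the clopen partition $\{\Phi_g^{-1}(P_i)\}_i$, so $\iota_{\mathcal P}$ is continuous; it is clearly equivariant for the shift action $\sigma$ on $\{1,\dots,k\}^{G}$. Hence $X_{\mathcal P}:=\iota_{\mathcal P}(X)$ is a subshift and $\iota_{\mathcal P}\colon(X,G,\Phi)\to(X_{\mathcal P},G,\sigma)$ is a factor map, so $(X_{\mathcal P},G,\sigma)$ has property $P$ because $P$ is preserved by factor maps. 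When $\mathcal Q$ refines $\mathcal P$, the block map recording which $\mathcal P$-cell contains a given $\mathcal Q$-cell induces a surjective factor map $X_{\mathcal Q}\to X_{\mathcal P}$ compatible with the $\iota$'s; thus the $X_{\mathcal P}$ form an inverse system with surjective bonding maps, in particular an ML system, and the induced equivariant map $\iota\colon X\to\ilim X_{\mathcal P}$ is injective (clopen sets separate points) and surjective (a standard compactness argument using that each $\iota_{\mathcal P}$ is onto), hence a conjugacy. This gives one direction for all six properties at once.

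For the converse, suppose $(X,G,\Phi)$ is conjugate to $\ilim(\phi^{j}_i,(X_i,G,\sigma))$ with the $X_i$ subshifts having property $P$ and the system ML. I would first pass to the eventual images $Y_i:=\phi^{j_0}_i(X_{j_0})$, which stabilize by the ML condition; the resulting system has the same inverse limit and \emph{surjective} bonding maps, and --- crucially --- because the eventual image is attained at a finite stage $j_0$, each $Y_i$ is a factor of the subshift $X_{j_0}$ and hence is itself a subshift with property $P$. Now the projections $\pi_i\colon X\to Y_i$ are surjective, every open subset of $X$ contains one of the form $\pi_i^{-1}(W)$ with $W$ open in $Y_i$, and $\Phi_g(\pi_i^{-1}(A))=\pi_i^{-1}(\sigma_g A)$. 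For transitivity, total transitivity and mixing these identities transport the defining property of $Y_i$ directly to $X$; for minimality one uses in addition that a closed invariant subset of $X$ surjects onto every $Y_i$ while $\bigcap_i\pi_i^{-1}(\pi_i(x))=\{x\}$; weak mixing follows by applying the transitive case to $X\times X=\ilim(Y_i\times Y_i)$; and for the specification property one runs the same pullback argument on orbit segments.

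The hard part will be the bookkeeping in the converse direction: I must fix the topological (open-cover) formulations of all six properties for countable group actions and verify that each is simultaneously preserved by factor maps and by inverse limits with surjective bonding maps. The specification property is the most delicate case, since its definition involves both the orbit segments and the admissible ``gap'' sets, and one must check that the gap set produced from specification on $Y_i$ still works for $X$ without having to pass further up the system. Conceptually, the single load-bearing use of the Mittag--Leffler hypothesis is exactly the step above: without it the eventual image $Y_i=\bigcap_j\phi^{j}_i(X_j)$ is only a possibly infinite intersection of factors and may fail to retain property $P$, whereas ML makes it an honest factor of a single term $X_{j_0}$.
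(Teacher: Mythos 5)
Your proposal is correct and follows essentially the same route as the paper: your itinerary subshifts $X_{\mathcal P}$ are exactly the paper's orbit spaces $\Or(\U)$ for clopen partitions $\U$ (Corollary \ref{c:4.2}), and the forward transfer of $P$ amounts to Lemmas \ref{l:6.3}--\ref{l:6.10} specialised to partitions, where the orbit space is an honest factor of $X$. Your reduction of the converse to surjective bonding maps via eventual images is an equivalent repackaging of the paper's direct ML lifting arguments (Lemmas \ref{l:6.1}, \ref{l:6.2}, \ref{l:6.6}, \ref{l:6.9}).
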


\begin{theorem}\label{t:1.4}
	Suppose that $(X,G,\Phi)$ is a dynamical system, where $X$ is compact metric space. Let property $P$ be one of the following properties: transitivity, totally transitivity, weakly mixing, and mixing. Then $\Phi$ has property $P$ if and only if $(X,G,\Phi)$ is a factor of the inverse limit of an ML inverse system of subshifts with property $P$.
\end{theorem}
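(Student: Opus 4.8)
The plan is to prove both directions of the equivalence, with essentially all the work in the forward (``only if'') direction. For the converse, suppose $(X,G,\Phi)$ is a factor of $\ilim\{\phi^{n+1}_n,X_n\}$ where each $(X_n,G,\sigma)$ is a subshift with property $P$ and the system is ML. Since transitivity, total transitivity, weak mixing and mixing all pass to factors, it suffices to check that the inverse limit itself has $P$. By the ML condition the images $\phi^n_m(X_n)\subseteq X_m$ eventually equal a fixed subshift $X_m^{\infty}=\phi^N_m(X_N)$, which inherits $P$ as a continuous equivariant image of $X_N$; replacing $X_m$ by $X_m^{\infty}$, the bonding maps---hence the projections $q_m\colon\ilim X_n\to X_m^{\infty}$---become surjective. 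Then, for open sets pulled back from level $m$, $\sigma^{*}_g q_m^{-1}(O)\cap q_m^{-1}(O')=q_m^{-1}(\sigma_g O\cap O')$, so transitivity of $X_m^{\infty}$ (respectively mixing, via preservation of cofinite return-time sets) lifts to the limit; weak mixing follows by applying this to the product inverse system, whose limit is $(\ilim X_n)^2$ and whose terms $X_n\times X_n$ are transitive, and total transitivity by restricting everything to each finite-index subgroup $H\le G$. These are the expected routine preservation lemmas.

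For the forward direction, I would fix a decreasing sequence of finite open covers $\U_1\succ\U_2\succ\cdots$ of the compact metric space $X$ with $\mathrm{mesh}(\U_n)\to 0$ and refinement maps $r_n\colon\U_{n+1}\to\U_n$ with $V\subseteq r_n(V)$, and set
\[ X_n=\Big\{(U_g)_{g\in G}\in\U_n^{\,G}:\ \bigcap_{g\in F}\Phi_g^{-1}(U_g)\neq\emptyset\ \text{ for every finite }F\subseteq G\Big\}. \]
This is a closed subset of $\U_n^{\,G}$, invariant under the shift $\sigma$ given by $(\sigma_h\xi)_g=\xi_{gh}$, hence a subshift; the maps $r_n$ induce equivariant continuous maps $\phi^{n+1}_n\colon X_{n+1}\to X_n$. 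Choosing the covers and refinement maps with the usual care, one arranges that the resulting inverse system is ML and that sending a thread $(U^{(n)}_g)_{g,n}$ to the unique point of $\bigcap_n\bigcap_{g\in G}\Phi_g^{-1}\big(\overline{U^{(n)}_g}\big)$---a singleton because these are nested nonempty compact sets with diameters tending to $0$---defines an equivariant continuous surjection $\pi\colon\ilim\{\phi^{n+1}_n,X_n\}\to X$. This is the standard presentation of a compact metric $G$-system as a factor of a zero-dimensional one, and only the coordination needed to keep the system ML and $\pi$ onto is delicate here.

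The heart of the argument is that each $X_n$ inherits property $P$ from $\Phi$, and this rests on one combinatorial observation. For a finite $F\subseteq G$ and a labelling $a\colon F\to\U_n$, write $\mathcal A(a)=\bigcap_{g\in F}\Phi_g^{-1}(a(g))$ and $[a]=\{\xi\in X_n:\xi|_F=a\}$. Then $[a]\neq\emptyset$ iff $\mathcal A(a)\neq\emptyset$ (given $x\in\mathcal A(a)$, extend $a$ to a full labelling by choosing for each $g\notin F$ a member of $\U_n$ containing $\Phi_g(x)$). Moreover $\sigma_h[a]$ is the cylinder over the labelling $g\mapsto a(gh)$ of $Fh^{-1}$, whose associated open set is $\Phi_h(\mathcal A(a))$; hence for every $h$ outside the finite set $(F')^{-1}F$ (so that $Fh^{-1}$ and $F'$ are disjoint) one has $\sigma_h[a]\cap[b]\neq\emptyset$ iff $\Phi_h(\mathcal A(a))\cap\mathcal A(b)\neq\emptyset$. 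Since cylinders form a basis for $X_n$, this means the return-time sets of $X_n$ between nonempty open sets coincide, up to a finite set, with return-time sets of $X$. Consequently transitivity and mixing pass from $X$ to $X_n$ at once; weak mixing passes because the $\U_n\times\U_n$-cover-subshift of $(X\times X,G)$ is exactly $X_n\times X_n$, so $X\times X$ transitive forces $X_n\times X_n$ transitive; and total transitivity passes by repeating the argument with each finite-index $H\le G$ in place of $G$. Composing with $\pi$ then exhibits $(X,G,\Phi)$ as a factor of an ML inverse limit of subshifts with property $P$.

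I expect the main obstacle to be exactly this transfer step: $X_n$ must be set up so that emptiness of a cylinder is governed precisely by emptiness of the corresponding finite intersection of preimages in $X$, after which the return-time bookkeeping has to be pushed through all four properties uniformly (the shift conventions and the finite exceptional set $(F')^{-1}F$ need tracking, and for weak mixing one checks that the cover-subshift of the product system is the product of the cover-subshifts). A secondary, routine, difficulty is the coordination of covers and bonding maps needed to make the system ML and $\pi$ surjective. It is also worth remarking that this construction explains why minimality and the specification property are excluded here although they appear in the totally disconnected case (Theorem~\ref{t:1.3}): the subshift $X_n$ carries many distinct labellings of a single $\Phi$-orbit and so is generally far from minimal even when $X$ is, and one cannot replace it by a minimal subsystem without destroying surjectivity of $\pi$; the specification property fails to transfer through the construction for an analogous reason.
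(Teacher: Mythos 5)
Your overall strategy is the same as the paper's: build orbit-space subshifts from a refining sequence of finite open covers, transfer property $P$ to each level via a comparison of hitting time sets between cylinders and the open sets $\bigcap_{g\in F}\Phi_{g^{-1}}(a(g))$ (your combinatorial observation is exactly the paper's Lemma \ref{l:6.3}, including the finite exceptional set $(F')^{-1}F$ and the product trick $\Or(\U)\times\Or(\U)\cong\Or(\U\times\U)$ for weak mixing), then map the inverse limit onto $X$; the converse via factor-preservation and Lemma \ref{l:6.6} is also as in the paper. However, there is a genuine gap at the one step you label ``routine'': with a single-valued refinement map $r_n:\U_{n+1}\To\U_n$, $V\subseteq r_n(V)$, the induced bonding maps on the orbit-space subshifts need \emph{not} be surjective, and the resulting inverse system need not satisfy the ML condition. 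The problem is that a given $V\in\U_{n+1}$ may lie inside two different elements $U,U'\in\U_n$ (covers are not partitions in the metric case), and $r_n$ must commit to one of them; an orbit pattern in $\U_n$ that uses $U'$ at some coordinate where the only available refining element $V$ has $r_n(V)=U$ then has no preimage. No choice of $r_n$ repairs this in general, and the paper explicitly identifies this failure as the obstruction at the start of Section 6.2. Since the ML condition is needed both for the statement of the theorem and for Lemma \ref{l:6.6} (property $P$ passing to the inverse limit), this cannot be waved away.

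The paper's fix, which is the essential new ingredient of the metric case and is absent from your proposal, is to replace cover elements by nested chains: the alphabet at level $n$ is $\U_n=\{(V_0,\dots,V_n)\in\V_1\times\cdots\times\V_n:V_0\supset\cdots\supset V_n\}$, the subshift $\Or'(\U_n)$ records a full history of refinements along each orbit, and the bonding map is truncation of the chain. Surjectivity of the induced map $\iota':\Or'(\U_{n+1})\To\Or'(\U_n)$ then follows from arranging the covers so that each $V\in\V_n$ equals $\bigcup\{W\in\V_{n+1}:W\subseteq V\}$: given a tracing point $x$, one can always append a coordinate $V_{n+1}\ni\Phi_g(x)$ with $V_{n+1}\subseteq V_n$, and no global choice function is needed. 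One then needs your hitting-time comparison one more time (the paper's Lemma \ref{l:6.14}) to pass property $P$ from $\Or(\V_n)$ to the chain subshift $\Or'(\U_n)$ through the last-coordinate factor map $\kappa$. The rest of your argument (the transfer lemma, the definition and surjectivity of $\pi$, the converse) goes through as you describe once this construction is in place.
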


\begin{remark}
	For the case of $\N$, that is, dynamical system $(X,f)$ where $X$ is compact Hausdorff space and $f:X\To X$ is a continuous surjection, we can also prove similar results of Theorem \ref{t:1.3} and Theorem \ref{t:1.4}.
\end{remark}

This paper is organized as follows.
In Section 2, we give some denotations and notations.
In Section 3, we give the topological definition of $S$-shadowing property for compact Hausdorff space and show that $S$-shadowing property is preserved for inverse limits. Finally, we give the definition of orbit spaces and pseudo-orbit spaces for countable group actions.
Section 4 is the proof of Theorem \ref{t:1.1} and Theorem \ref{t:1.2}.
In Section 5, we show a class of factor maps which preserve shadowing property.
In Section 6, we give the proof of Theorem \ref{t:1.3} and Theorem \ref{t:1.4}.
%In Section 6, we prove that for some dynamical properties $P$, the dynamical systems which has property $P$ can be represented by subshifts with same property.

%Fix any $t<\inf\{\dim_C(\mu^{(l)}):l\ge 0\}$.For any $l\ge 0$, $t<\dim_C(\mu^{(l)})\le \dim_C(G(\mu^{(l)}))$. By Theorem 3.4, $E(\mu^{(l)})\in\mathcal{G}^t(X)$. By Theorem 3.2, $E(\mathcal{J})\in\mathcal{G}^t(X)$ and $\dim_C(E(\mathcal{J}))\ge t$.

\section{Preliminaries}
In this section, we give some basic denotations and notations.

For a finite set $\Sigma$ and a countable group $G$, the dynamical system $(\Sigma^G,G,\sigma)$ is called a \emph{full shift with alphabet $\Sigma$ over group $G$}, where $\Sigma^G$ is endowed by the Tychonoff topology, and for each $g\in G$, $\sigma_g(x)(h)=x(hg)$ for any $h\in G$.
If we take $G=\{g_0=e_G,g_1,g_2,\dots\}$, a metric $d$ on $\Sigma^G$, which is compatible with the Tychonoff topology, can be taken as:
$$d(x,y)=2^{-\min\{n\in\N:x(g_n)\neq y(g_n)\}}$$
for any $x,y\in \Sigma^G$.
For a set $A\subset G$ and $P_A\in\Sigma^A$, denote the cylinder set by
$$C(P_A)=\{y\in \Sigma^G:y(g)=P_A(g),\text{ for any }g\in A\}.$$
A \emph{subshift} is a closed $G$-invariant subset $X$ of $\Sigma^G$.
For a finite subset $A\subset G$, the \emph{language} $\mathcal{L}_A(X)$ of a subshift $(X,G,\sigma)$ is the set $\{P_A\in\Sigma^A:C(P_A)\cap X \nempty\}$.
For a finite subset $A$ of $G$, the subshift $X$ is called \emph{shift of finite type over $A$} if there exists a collection of \emph{forbidden patterns} $\mathcal{F}\subset \Sigma^A$ such that
$$X=\{x\in\Sigma^G:\text{for any }g\in G \text{ and any }P_A\in\mathcal{F},\,\sigma_g(x)\notin C(P_A)\}.$$

For two dynamical systems $(X,G,\Phi^X)$ and $(Y,G,\Phi^Y)$, recall that $(Y,G,\Phi^Y)$ is a \emph{factor} of $(X,G,\Phi^X)$ if there exists a continuous surjection $\phi:X\To Y$ satisfying $\Phi^Y_g\circ\phi=\phi\circ\Phi^X_g$ for any $g\in G$ and the map $\phi$ is called a \emph{factor map} from $(X,G,\Phi^X)$ to $(Y,G,\Phi^Y)$. Moreover, we call $(X,G,\Phi^X)$ is \emph{conjugate} to $(Y,G,\Phi^Y)$ if the factor map $\phi$ is injective.

A pair $(\Lambda,\le)$ is called a \emph{directed} set if $\le$ is a transitive order and for any $\lambda,\eta\in\Lambda$ there exists $\gamma\in\Lambda$ such that $\lambda\le\gamma$, $\eta\le\gamma$. A subset $A$ of $\Lambda$ is \emph{cofinal} if for any $\lambda\in\Lambda$, there exists $\eta\in A$ such that $\lambda\le\eta$.

\begin{definition}
	Let $(\Lambda,\le)$ be a directed set. For each $\lambda\in\Lambda$, let $X_\lambda$ be a compact Hausdorff space, and for each $\lambda\le\eta$, $\phi^{\eta}_{\lambda}:X_\eta\To X_\lambda$ is a continuous map. Then $(\phi^{\eta}_{\lambda},X_\lambda)$ is called \emph{inverse system} if
	\begin{enumerate}
		\item[(1)] $\phi^{\lambda}_{\lambda}$ is the identity map;
		\item[(2)] For $\lambda\le\eta\le\gamma$, $\phi^{\gamma}_{\lambda}=\phi^{\eta}_{\lambda}\circ\phi^{\gamma}_{\eta}$.
	\end{enumerate}

	The \emph{inverse limit} of $(\phi^{\eta}_{\lambda},X_\lambda)$ is the space
	$$\ilim\{\phi^{\eta}_{\lambda},X_\lambda\}=\{(x_\lambda)_{\lambda\in\Lambda}\in \prod_{\lambda\in\Lambda}X_\lambda:\text{for any }\lambda\le\eta,\,x_\lambda=\phi^{\eta}_{\lambda}(x_{\eta})\},$$
	whose topology is inherited by the Tychonoff topology on $\prod_{\lambda\in\Lambda}X_\lambda$.
	
	Also, the inverse system $(\phi^{\eta}_{\lambda},X_\lambda)$ satisfies \emph{Mittag--Leffler condition} (or ML condition for short) if for any $\lambda\in\Lambda$, there exists $\eta\ge\lambda$ such that for any $\gamma\ge\eta$, $\phi^\eta_\lambda(X_\eta)=\phi^\gamma_\lambda(X_\gamma)$. In this case, the inverse system is called \emph{ML inverse system}.
\end{definition}

It can be seen that the inverse limit of $(\phi^{\eta}_{\lambda},X_\lambda)$ is a compact Hausdorff space and $\{\pi_\lambda^{-1}(U_\lambda)\cap\ilim\{\phi^{\eta}_{\lambda},X_\lambda\}:\lambda\in\Lambda\text{ and } U_\lambda \text{ is an open subset of }X_\lambda\}$ forms a basis for $\ilim\{\phi^{\eta}_{\lambda},X_\lambda\}$, where $\pi_\lambda$ is the projection from the product space $\prod_{\eta\in\Lambda}X_\eta$ to space $X_\lambda$.
If the inverse system $(\phi^{\eta}_{\lambda},X_\lambda)$ satisfies the ML condition and $\gamma$ witnesses the condition with respect to $\mu$, then $\pi_\mu^{-1}(x)\cap\ilim\{\phi^{\eta}_{\lambda},X_\lambda\}\nempty$ for any $x\in\phi^\gamma_\mu(X_\gamma)$. Also, if the bonding maps $\phi^\eta_\lambda$ is surjective, then the inverse system satisfies the ML condition.

Now, suppose that there is a group $G$ which actions on $X_\lambda$ by $\Phi^\lambda$ for each $\lambda\in\Lambda$, that is, $(X_\lambda,G,\Phi^\lambda)$ is dynamical system for each $\lambda\in\Lambda$. If the bonding maps $\phi^\eta_\lambda:X_\eta\To X_\lambda$ commute with the action $\Phi^\lambda$, we can also extend this definition to the family of dynamical systems $\{(X_\lambda,G,\Phi^\lambda):\lambda\in\Lambda\}$.

\begin{definition}
	Let $(\Lambda,\le)$ be a directed set. For each $\lambda\in\Lambda$, let $(X_\lambda,G,\Phi^\lambda)$ be a dynamical system, and for each $\lambda\le\eta$, $\phi^{\eta}_{\lambda}:X_\eta\To X_\lambda$ is a continuous map. Then $(\phi^{\eta}_{\lambda},(X_\lambda,G,\Phi^\lambda))$ is called \emph{inverse system} if
	\begin{enumerate}
		\item[(1)] $\phi^{\lambda}_{\lambda}$ is the identity map;
		\item[(2)] For $\lambda\le\eta\le\gamma$, $\phi^{\gamma}_{\lambda}=\phi^{\eta}_{\lambda}\circ\phi^{\gamma}_{\eta}$;
		\item[(3)] For $\lambda\le\eta$ and $g\in G$, $\Phi^\lambda_g\circ\phi^{\eta}_{\lambda}=\phi^{\eta}_{\lambda}\circ\Phi^\eta_g$.
	\end{enumerate}
	
	The \emph{inverse limit} of $(\phi^{\eta}_{\lambda},(X_\lambda,G,\Phi^\lambda))$ is the dynamical system
	$(\ilim\{\phi^{\eta}_{\lambda},X_\lambda\},G,\Phi^*)$
	where $\Phi^*_g$ is given by
	$$\Phi^*_g((x_\lambda)_{\lambda\in\Lambda})=(\Phi^\lambda_g(x_\lambda))_{\lambda\in\Lambda}$$
	for any $g\in G$.
\end{definition}

\section{Shadowing in compact Hausdorff space}
In this section, inspired by the idea in \cite{GM20}, we investigated a topological definition of shadowing for group action.
The metric version of shadowing for group action is introduced in \cite{Mey19}, and in \cite{CL18}, it is the case of finitely generated groups.

To define the topological version of shadowing property, we need some notations and definitions.
For a topological space $X$, denote by $\FOC(X)$ the collection of all finite open covers of $X$. Here, we always assume that each $\U\in\FOC(X)$ does not contain empty set.

\begin{definition}
	Let $(X,G,\Phi)$ be a dynamical system, $S$ is a subset of $G$, and $\U\in\FOC(X)$.
	\begin{itemize}
		\item[(1)] The sequence $\{x_g\}_{g\in G}$ on $X$ is called \emph{$(S,\U)$-pseudo-orbit} if there exists a sequence $\{U_g\}_{g\in G}$ on $\U$ such that for any $g\in G$ and $s\in S$, $x_{sg},\Phi_s(x_g)\in U_{sg}$.
		
		\item[(2)] For a sequence $\{U_{g}\}_{g\in G}$ on $\U$, it is called \emph{$(S,\U)$-pseudo-orbit pattern} if there exists $(S,\U)$-pseudo-orbit such that $x_{sg},\Phi_s(x_g)\in U_{sg}$ for all $g\in G$ and $s\in S$. And it is called \emph{$\U$-orbit pattern} if there exists a point $x\in X$ such that $\Phi_g(x)\in U_g$ for all $g\in G$.
		
		\item[(3)] We say a point $x\in X$ \emph{$\U$-shadows} $\{x_g\}_{g\in G}$ if for each $g\in G$, there exists $U_g\in\U$ such that $\Phi_g(x),x_g\in U_g$.
	\end{itemize}
\end{definition}

\iffalse
\begin{definition}
	Let $(X,G,\Phi)$ be a dynamical system, group $G$ be a finitely generated by a generator $S$, and $\U\in\FOC(X)$.
	\begin{itemize}
		\item[(1)] The sequence $\{x_g\}_{g\in G}$ on $X$ is called \emph{$(S,\U)$-pseudo-orbit} if for any $g\in G$ and $s\in S$, there exists $U_{s,g}\in \U$ such that $x_{sg},\Phi_s(x_g)\in U_{s,g}$.
			
		\item[(2)] For a sequence $\{U_{g}\}_{g\in G}$ on $\U$, it is called \emph{$(S,\U)$-pseudo-orbit pattern} if there exists an $(S,\U)$-pseudo-orbit such that $x_{sg},\Phi_s(x_g)\in U_{sg}$ for all $g\in G$ and $s\in S$. And it is called \emph{$\U$-orbit pattern} if there exists a point $x\in X$ such that $\Phi_g(x)\in U_g$ for all $g\in G$.
			
		\item[(3)] We say a point $x\in X$ \emph{$\U$-shadows} $\{x_g\}_{g\in G}$ if for each $g\in G$, there exists $U_g\in\U$ such that $\Phi_g(x),x_g\in U_g$.
	\end{itemize}
\end{definition}
\fi

If a sequence $\{x_g\}_{g\in G}$ on $X$ satisfies that for any $s\in S$ and $g\in G$, there exists $U_{s,g}\in \U$ such that $x_{sg},\Phi_s(x_g)\in U_{s,g}$, then it may not correspond any $(S,\U)$-pseudo-orbit pattern, noting that it may happen $U_{s,g}\neq U_{s',g'}$ with some $sg=s'g'$.

\begin{example}\label{ex}
	For the dynamical system $(\{0,1\}^{\Z^2},\Z^2,\sigma)$, we take a finite subset $S$ of $\Z^2$ is the set $\{\vec{e}_1=(1,0),\vec{e}_2=(0,1)\}$. Let $A=\{(0,0),(1,0)\}$, and for any $a,b\in\{0,1\}$, take $P^{(a,b)}_A$ such that $P^{(a,b)}_A((0,0))=a$ and $P^{(a,b)}_A((1,0))=b$. Set $U_1=C(P^{(0,0)}_A)\cup C(P^{(1,0)}_A)$, $U_2=C(P^{(0,0)}_A)\cup C(P^{(0,1)}_A)$,  $U_3=C(P^{(1,1)}_A)$ and $\U=\{U_1,U_2,U_3\}$ be a finite open cover of $\{0,1\}^{\Z^2}$.
	Take a sequence $\{x_{\vec{a}}\}_{\vec{a}\in \Z^2}$ such that $\sigma_{(0,1)}(x_{(1,0)})\in C(P^{(1,0)}_A)$, $\sigma_{(1,0)}(x_{(0,1)})\in C(P^{(0,1)}_A)$, $x_{(1,1)}\in C(P^{(0,0)}_A)$, and satisfies that for any $\vec{s}\in S$ and $\vec{a}\in \Z^2$, there exists $U_{\vec{s},\vec{a}}\in \U$ such that $x_{\vec{s}+\vec{a}},\sigma_{\vec{s}}(x_{\vec{a}})\in U_{\vec{s},\vec{a}}$. Noticing that $\sigma_{(0,1)}(x_{(1,0)}),x_{(1,1)}\in U_1$ and $\sigma_{(1,0)}(x_{(0,1)}),x_{(1,1)}\in U_2$, the sequence $\{x_{\vec{a}}\}_{\vec{a}\in \Z^2}$ does not respond to any $(S,\U)$-pseudo-orbit pattern.
\end{example}

However, if the elements of $\U$ are pairwise disjoint, then for some $sg=s'g'$, $x_{sg}=x_{s'g'}\in U_{s,g}\cap U_{s',g'}$, which implies that $U_{s,g}=U_{s',g'}$. Thus, in this case, such a sequence $\{x_g\}_{g\in G}$ is $(S,\U)$-pseudo-orbit if for any $s\in S$ and $g\in G$, there exists $U_{s,g}\in \U$ such that $x_{sg},\Phi_s(x_g)\in U_{s,g}$.

\begin{lemma}
	Let $(X,G,\Phi)$ be a dynamical system, where $X$ is compact metric space. Let $S$ be a subset of $G$. Then $\Phi$ has $S$-shadowing property if and only if for any $\U\in\FOC(X)$, there exists $\V\in\FOC(X)$ such that each $(S,\V)$-pseudo-orbit can be $\U$-shadowed by some point in $X$.
\end{lemma}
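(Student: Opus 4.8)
The plan is to deduce the equivalence from the Lebesgue number lemma, exactly as in the single-map case of \cite{GM20}; the only genuinely new point is the passage between ``metric'' pseudo-orbits and genuine $(S,\V)$-pseudo-orbits, which by Example~\ref{ex} is not entirely automatic. Throughout I would use two standard facts about the compact metric space $X$: for every $\W\in\FOC(X)$ there is a Lebesgue number $\lambda_\W>0$, i.e. every subset of $X$ of diameter $<\lambda_\W$ lies in some member of $\W$; and for every $\rho>0$ the balls $\{B(x,\rho):x\in X\}$ form an open cover and hence admit a finite subcover, which therefore belongs to $\FOC(X)$ and whose members all have diameter $<2\rho$.

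For the implication ``$S$-shadowing $\Rightarrow$ topological shadowing'', given $\U\in\FOC(X)$ I would set $\lambda:=\lambda_\U$, apply the $S$-shadowing property with $\eps:=\lambda$ to obtain $\delta>0$, and take $\V$ to be a finite subcover of $\{B(x,\delta/2):x\in X\}$. If $\{x_g\}_{g\in G}$ is an $(S,\V)$-pseudo-orbit with witnessing sequence $\{V_g\}_{g\in G}$ in $\V$, then $x_{sg}$ and $\Phi_s(x_g)$ both lie in the common radius-$\delta/2$ ball $V_{sg}$, so $d(\Phi_s(x_g),x_{sg})<\delta$ for all $g\in G$, $s\in S$; thus $\{x_g\}$ is an $(S,\delta)$-pseudo-orbit and is $\lambda$-shadowed by some $z\in X$. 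Since $d(\Phi_g(z),x_g)<\lambda$ for every $g$, the two-point set $\{\Phi_g(z),x_g\}$ has diameter $<\lambda$ and so lies in some member of $\U$, i.e. $z$ $\U$-shadows $\{x_g\}$.

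For the converse, given $\eps>0$ I would take $\U$ to be a finite subcover of $\{B(x,\eps/2):x\in X\}$, apply the hypothesis to get $\V\in\FOC(X)$ with the stated topological shadowing property, and put $\delta:=\lambda_\V/3$. Given an $(S,\delta)$-pseudo-orbit $\{x_g\}_{g\in G}$, for each $h\in G$ the ball $B(x_h,\delta)$ has diameter at most $2\delta<\lambda_\V$, so I may fix $V_h\in\V$ with $B(x_h,\delta)\subseteq V_h$. Then for every $g\in G$ and $s\in S$ we have $x_{sg}\in B(x_{sg},\delta)\subseteq V_{sg}$, and since $d(\Phi_s(x_g),x_{sg})<\delta$ also $\Phi_s(x_g)\in B(x_{sg},\delta)\subseteq V_{sg}$; hence $\{x_g\}$ is a bona fide $(S,\V)$-pseudo-orbit with witness $\{V_h\}_{h\in G}$. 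By hypothesis some $z\in X$ $\U$-shadows it, and since each member of $\U$ has diameter $<\eps$ this forces $d(\Phi_g(z),x_g)<\eps$ for all $g$, i.e. $z$ $\eps$-shadows $\{x_g\}$.

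I expect the one delicate step to be precisely the construction, in the converse, of a single labelling $\{V_h\}_{h\in G}$ that simultaneously witnesses the pseudo-orbit for \emph{all} factorisations $h=sg$: an arbitrary pointwise choice $(s,g)\mapsto U_{s,g}$ need not descend to such a labelling (cf. Example~\ref{ex}). The resolution is to center the chosen cover elements at the points $x_h$ of the pseudo-orbit itself, so that $B(x_h,\delta)$ automatically contains $\Phi_s(x_g)$ for every way of writing $h=sg$; the resulting factor-of-$3$ slack is absorbed harmlessly by the Lebesgue number. The remainder is routine bookkeeping with the triangle inequality.
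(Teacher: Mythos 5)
Your proposal is correct and follows essentially the same route as the paper's own proof: both directions hinge on converting between metric and cover pseudo-orbits via the Lebesgue number, and in the converse direction both arguments resolve the labelling issue by choosing the cover element $V_h$ to contain the ball $B(x_h,\delta)$ centered at the pseudo-orbit point itself, so that every factorisation $h=sg$ is witnessed by the same $V_h$. Your extra factor of $3$ in $\delta=\lambda_\V/3$ is a slightly more careful calibration than the paper's direct use of the Lebesgue number, but the argument is otherwise identical.
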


\begin{proof}
	First, suppose that $\Phi$ has $S$-shadowing property.
	For any $\U\in\FOC(X)$, take $\eps$ be the Lebesgue number of $\U$.
	By the $S$-shadowing property of $\Phi$, there exists $\delta>0$ such that each $(S,\delta)$-pseudo-orbit can be $\eps$-shadowed by some point in $X$.
	By the compactness of $X$, take $\V$ be a finite subcover of $\{B(x,\delta/2): x\in X\}$. For any $(S,\V)$-pseudo-orbit $\{x_g\}_{g\in G}$, it can be seen that $d(\Phi_s(x_g),x_{sg})<\delta$, which implies it is $(S,\delta)$-pseudo-orbit. So there exists a point $z\in X$ such that $d(\Phi_g(z),x_g)<\eps$ for all $g\in G$. It means that there exists $U_g\in \U$ such that $\Phi_g(z),x_g\in U_g$, that is, $z$ $\U$-shadows $\{x_g\}_{g\in G}$.
	
	Conversely, for any $\eps>0$, we take $\U$ be a finite subcover of $\{B(x,\eps/2): x\in X\}$. So there exists $\V\in\FOC(X)$ such that each $(S,\V)$-pseudo-orbit can be $\U$-shadowing by some point in $X$. Take $\delta$ be the Lebesgue number of $\V$. Fix any $(S,\delta)$-pseudo-orbit $\{x_g\}_{g\in G}$. For any $g\in G$, there exists $V_g
	\in\V$ such that $B(x_g,\delta)\subset V_g$. If $g=s'g'$ for some $s'\in S$ and $g'\in G$, then $\Phi_{s'}(x_{g'})\in B(x_g,\delta)\subset V_g$. So $\{x_g\}_{g\in G}$ is $(S,\V)$-pseudo-orbit with $(S,\V)$-pseudo-orbit pattern $\{V_g\}_{g\in G}$. Thus there exists a point $z\in X$ $\U$-shadowing $\{x_g\}_{g\in G}$, which implies that $z$ $\eps$-shadows $\{x_g\}_{g\in G}$. Thus $\Phi$ has $S$-shadowing property.
\end{proof}

By the above lemma, for a dynamical system $(X,G,\Phi)$ where $X$ is a compact Hausdorff space, we say that $\Phi$ has \emph{$S$-shadowing property} if, for any $\U\in\FOC(X)$, there exists $\V\in\FOC(X)$ such that each $(S,\V)$-pseudo-orbit can be $\U$-shadowed by some point in $X$. Since $(S,\V)$-pseudo-orbit is equivalent to $(S\cup \{e_G\},\V)$-pseudo-orbit, $S$-shadowing property is equivalent to $S\cup \{e_G\}$-shadowing property. For two covers $\U,\V\in\FOC(X)$, recall that $\V$ \emph{refines} $\U$ (denote by $\V\succ\U$) if for any $V\in\V$, there exists $U\in\U$ such that $V\subset U$.

The following theorem shows that the shadowing property is preserved by inverse limit.

\begin{theorem}\label{t:3.4}
	Let $S$ be a subset of $G$. Suppose that dynamical system $(X,G,\Phi)$ is conjugate to an inverse limit $(\ilim\{\phi^{\eta}_{\lambda},X_\lambda\},G,(\Phi^\lambda)^*)$ satisfying the ML condition. If $\Phi^\lambda$ has $S$-shadowing property for each $\lambda\in\Lambda$, then $\Phi$ has $S$-shadowing property.
\end{theorem}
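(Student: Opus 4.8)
The plan is to reduce to the case where $(X,G,\Phi)$ literally \emph{is} the inverse limit $(\ilim\{\phi^{\eta}_{\lambda},X_\lambda\},G,(\Phi^\lambda)^*)$, since $S$-shadowing is a conjugacy invariant (a conjugacy carries $\FOC(X)$ to $\FOC(X')$ and $(S,\cdot)$-pseudo-orbits to $(S,\cdot)$-pseudo-orbits, in both directions). So write $\pi_\lambda:X\To X_\lambda$ for the projections, and recall they are continuous, satisfy $\pi_\lambda=\phi^\eta_\lambda\circ\pi_\eta$ for $\lambda\le\eta$, and commute with the actions, $\pi_\lambda\circ\Phi_g=\Phi^\lambda_g\circ\pi_\lambda$. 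Fix $\U\in\FOC(X)$. The first step is to pull $\U$ down to a single finite level: using that the sets $\pi_\lambda^{-1}(U_\lambda)\cap X$ form a basis for $X$, the compactness of $X$, and the directedness of $\Lambda$, one produces $\lambda\in\Lambda$ and $\U_\lambda\in\FOC(X_\lambda)$ with $\pi_\lambda^{-1}(\U_\lambda):=\{\pi_\lambda^{-1}(U)\cap X:U\in\U_\lambda\}\succ\U$; here the only wrinkle is that the pulled-back sets need only cover $\pi_\lambda(X)$, which is closed in $X_\lambda$, so one enlarges each by the open set $X_\lambda\setminus\pi_\lambda(X)$ to get a genuine element of $\FOC(X_\lambda)$ without changing its trace on $X$.

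Next invoke the ML condition: choose $\eta\ge\lambda$ witnessing it for $\lambda$, so that $\phi^\gamma_\lambda(X_\gamma)=\phi^\eta_\lambda(X_\eta)$ for all $\gamma\ge\eta$; in particular $\pi_\lambda(X)=\phi^\eta_\lambda(X_\eta)$ and, by the observation recorded after the definition of ML inverse system, $\pi_\lambda^{-1}(y)\cap X\nempty$ for every $y\in\phi^\eta_\lambda(X_\eta)$. Now set $\U_\eta:=\{(\phi^\eta_\lambda)^{-1}(U):U\in\U_\lambda\}\in\FOC(X_\eta)$ and apply the $S$-shadowing property of $\Phi^\eta$ to obtain $\V_\eta\in\FOC(X_\eta)$ such that every $(S,\V_\eta)$-pseudo-orbit of $\Phi^\eta$ is $\U_\eta$-shadowed by some point of $X_\eta$. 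Then put $\V:=\{\pi_\eta^{-1}(V)\cap X:V\in\V_\eta\}\in\FOC(X)$; I claim this $\V$ works for $\U$.

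To verify the claim, let $\{x_g\}_{g\in G}$ be an $(S,\V)$-pseudo-orbit, with pattern $\{V_g\}_{g\in G}$ where $V_g=\pi_\eta^{-1}(\tilde V_g)\cap X$, $\tilde V_g\in\V_\eta$. Applying $\pi_\eta$ and using $\pi_\eta\circ\Phi_s=\Phi^\eta_s\circ\pi_\eta$ shows that $\{\pi_\eta(x_g)\}_{g\in G}$ is an $(S,\V_\eta)$-pseudo-orbit of $\Phi^\eta$ with pattern $\{\tilde V_g\}_{g\in G}$; let $z_\eta\in X_\eta$ $\U_\eta$-shadow it, so for each $g$ there is $U_g\in\U_\lambda$ with $\Phi^\eta_g(z_\eta),\pi_\eta(x_g)\in(\phi^\eta_\lambda)^{-1}(U_g)$. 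Applying $\phi^\eta_\lambda$ and using $\phi^\eta_\lambda\circ\Phi^\eta_g=\Phi^\lambda_g\circ\phi^\eta_\lambda$ together with $\phi^\eta_\lambda\circ\pi_\eta=\pi_\lambda$ gives $\Phi^\lambda_g(z_\lambda),\pi_\lambda(x_g)\in U_g$ for $z_\lambda:=\phi^\eta_\lambda(z_\eta)\in\phi^\eta_\lambda(X_\eta)$. Lift $z_\lambda$ to $z\in X$ with $\pi_\lambda(z)=z_\lambda$; then $\pi_\lambda(\Phi_g(z))=\Phi^\lambda_g(z_\lambda)\in U_g$ and $\pi_\lambda(x_g)\in U_g$, so $\Phi_g(z),x_g\in\pi_\lambda^{-1}(U_g)\cap X$, which is contained in a member of $\U$ by $\pi_\lambda^{-1}(\U_\lambda)\succ\U$. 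Hence $z$ $\U$-shadows $\{x_g\}_{g\in G}$, as required.

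The delicate point — and the only place the ML hypothesis enters — is the passage from a shadowing point at a finite stage to one in $X$: shadowing the projected pseudo-orbit directly at stage $\lambda$ would produce a point of $X_\lambda$ that need not lie in $\pi_\lambda(X)$, hence cannot be lifted. The remedy is to shadow one stage higher, at $\eta$, where the ML condition forces $\phi^\eta_\lambda(X_\eta)=\pi_\lambda(X)$ and that every such point lifts; this is the heart of the proof, while the remainder is a routine chase through the compatibility relations among the $\pi_\lambda$, the $\phi^\eta_\lambda$ and the actions.
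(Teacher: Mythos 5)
Your proposal is correct and follows essentially the same route as the paper's proof: pull the cover $\U$ down to a finite level $\lambda$, pass to a level $\eta$ witnessing the ML condition, apply the $S$-shadowing property of $\Phi^\eta$ to the pulled-back cover $(\phi^\eta_\lambda)^{-1}(\U_\lambda)$, project the projected pseudo-orbit's shadowing point down to level $\lambda$, and lift it to $X$ using the ML condition. Your extra remark about enlarging the cover elements by $X_\lambda\setminus\pi_\lambda(X)$ is a minor point of care the paper glosses over, but the argument is otherwise the same.
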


\begin{proof}
	Without loss of generality, let $(\Lambda,\le)$ be a directed set and $(X,G,\Phi)$ be the inverse limit $(\ilim\{\phi^{\eta}_{\lambda},X_\lambda\},G,(\Phi^\lambda)^*)$ satisfying the ML condition, where $\Phi^\lambda$ has $S$-shadowing property for each $\lambda\in\Lambda$.
	
	Fix any $\U\in\FOC(X)$.
	Since the set $$\{\pi_\lambda^{-1}(U_\lambda)\cap X:\lambda\in\Lambda\text{ and } U_\lambda \text{ is an open subset of }X_\lambda\}$$
	forms a basis for $\ilim\{\phi^{\eta}_{\lambda},X_\lambda\}$ and $(\Lambda,\le)$ is a directed set, there exist $\lambda\in\Lambda$ and a finite open cover $\W_\lambda$ of $X_\lambda$ such that $\{\pi^{-1}_\lambda(W)\cap X:W\in\W_\lambda\}$ is a finite open cover of $X$ refining $\U$.
	By the ML condition, there exists $\eta\ge\lambda$ such that $\phi^\gamma_\lambda(X_\gamma)=\phi^\eta_\lambda(X_\eta)$ for any $\gamma\ge\eta$.
	Let $\W_\eta=\{(\phi^\eta_\lambda)^{-1}(W): W\in\W_\lambda\}$.
	Since $\Phi^\eta$ has $S$-shadowing property, we can find $\V_\eta\in\FOC(X_\eta)$ witnesses the property with respect to $\W_\eta$ and let $\V=\{\pi^{-1}_\eta(V)\cap X: V\in\V_\eta\}$.
	
	We claim that each $(S,\V)$-pseudo-orbit can be $\U$-shadowed by some point in $X$.
	Fix any $(S,\V)$-pseudo-orbit $\{x_g\}_{g\in G}$ with pattern $\{\pi^{-1}_\eta(V_g)\cap X\}_{g\in G}$.
	Noticing that $\Phi^\eta_s(\pi_\eta(x_g)),$ $\pi_\eta(x_{sg})\in V_{sg}$ for any $g\in G$ and any $s\in S$, it means that $\{\pi_\eta(x_g)\}_{g\in G}$ is $(S,\V_\eta)$-pseudo-orbit with pattern $\{V_g\}_{g\in G}$.
	Because $\V_\eta$ witnesses the $S$-shadowing property with respect to $\W_\eta$, there exists a point $z_\eta\in X_\eta$ which $\W_\eta$-shadows $\{\pi_\eta(x_g)\}_{g\in G}$, that is, for each $g\in G$ there exists $W_g\in\W_\eta$ such that $\Phi^\eta_g(z_\eta), \pi_\eta(x_g)\in W_g=(\phi^\eta_\lambda)^{-1}(W'_g)$ for some $W'_g\in\W_\lambda$.
	Thus by the ML condition, for $z_\lambda=\phi^\eta_\lambda(z_\eta)\in\phi^\eta_\lambda(X_\eta)$, there exists a point $z\in X$ with $\pi_\lambda(z)=z_\lambda$. For each $g\in G$, we have $\Phi^\lambda_g\pi_\lambda(z)=\phi^\eta_\lambda(\Phi^\eta_g(z_\eta)), \pi_\lambda(x_g)=\phi^\eta_\lambda(\pi_\eta(x_g))\in W'_g\in\W_\lambda$, which means that $z$ $\W_\lambda$-shadows $\{x_g\}_{g\in G}$. Since $\W_\lambda$ refines $\U$, it ends the proof.
\end{proof}

Inspired by \cite{GM20}, we also consider the notion of orbit space and pseudo-orbit for countable group actions.

\begin{definition}
	Let $(X,G,\Phi)$ be a dynamical system and $S$ be a subset of $G$, $\U\in\FOC(X)$, and $\U^G$ be the full shift with alphabet $\U$ over group $G$.
	\begin{itemize}
		\item[(1)] The \emph{$\U$-orbit space} is the set $\Or(\U)$ which is the closure of the set
		$$\{\{U_g\}_{g\in G}\in\U^G:\bigcap_{g\in G}\Phi_{g^{-1}}(U_g)\nempty\}.$$
		
		\item[(2)] The \emph{$(S,\U)$-pseudo-orbit space} is the set
		$$\PO(\U)=\{\{U_g\}_{g\in G}\in\U^G:\text{there is $(S,\U)$-pseudo-orbit with pattarn } \{U_g\}_{g\in G}\}.$$
	\end{itemize}
\end{definition}

For each $g\in G$, denote by $\pi_g$ the projection from $\U^G$ to $\U$ onto the coordinate $g$.
The following lemma shows that both orbit space and pseudo-orbit space are subshifts and, in particular, pseudo-orbit space is a shift of finite type.

\begin{lemma}\label{l:3.6}
	Let $(X,G,\Phi)$ be a dynamical system, $S$ be a finite subset of $G$, and $\U\in\FOC(X)$.
	Then $\Or(\U)$ and $\PO(\U)$ are closed $G$-invariant subset of $\U^G$, and
	$$\PO(\U)=\{\{U_g\}_{g\in G}\in\U^G:U_g\cap\bigcap_{s\in S}\Phi_{s^{-1}}(U_{sg})\nempty\text{ for any }g\in G\}.$$
	Both $\Or(\U)$ and $\PO(\U)$ are subshifts of $\U^G$ and $\PO(\U)$ is a shift of finite type over $S\cup \{e_G\}$.
\end{lemma}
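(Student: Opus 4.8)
The plan is to establish four things: (i) $\PO(\U)$ equals the explicitly described set $Y:=\{\{U_g\}_{g\in G}\in\U^G:U_g\cap\bigcap_{s\in S}\Phi_{s^{-1}}(U_{sg})\nempty\text{ for any }g\in G\}$; (ii) $Y$ (hence $\PO(\U)$) is closed and $G$-invariant; (iii) $Y$ is a shift of finite type over $S\cup\{e_G\}$; (iv) $\Or(\U)$ is closed and $G$-invariant, i.e.\ a subshift. I would take these in the order (i), (iii), (ii), (iv), since (iii) makes (ii) almost immediate and (iv) is logically independent.

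For (i): if $\{U_g\}_{g\in G}$ is an $(S,\U)$-pseudo-orbit pattern, witnessed by a pseudo-orbit $\{x_g\}_{g\in G}$, then for each fixed $g$ we have $x_g\in U_g$ and, for every $s\in S$, $\Phi_s(x_g)\in U_{sg}$, i.e.\ $x_g\in\Phi_{s^{-1}}(U_{sg})$; so $x_g$ lies in the intersection, which is therefore nonempty. Conversely, given $\{U_g\}_{g\in G}\in Y$, for each $g$ choose $x_g\in U_g\cap\bigcap_{s\in S}\Phi_{s^{-1}}(U_{sg})$ using the axiom of choice; then $x_g\in U_g$ and $\Phi_s(x_g)\in U_{sg}$ for all $s\in S$, $g\in G$, so $\{x_g\}_{g\in G}$ is an $(S,\U)$-pseudo-orbit with pattern $\{U_g\}_{g\in G}$. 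This gives $\PO(\U)=Y$.

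For (iii): the condition defining $Y$ is a local constraint on the window $(S\cup\{e_G\})g$ for each $g$. Concretely, let $A=S\cup\{e_G\}$ and let $\mathcal F\subset\U^A$ be the set of patterns $P_A$ such that $\bigcap_{s\in S}\Phi_{s^{-1}}(P_A(s))\cap P_A(e_G)=\emptyset$ — I must be careful with the direction of the shift: since $\sigma_g(x)(h)=x(hg)$, one checks that $\{U_g\}\in Y$ iff for every $g$ the pattern $h\mapsto U_{hg}$ (restricted to $h\in A$) is not forbidden, which matches the definition of a shift of finite type over $A$ with forbidden set $\mathcal F$. (I would verify this index bookkeeping explicitly, as it is the one place a sign error could creep in.) Since $\mathcal F\subset\U^A$ with $A$ finite, $Y$ is a shift of finite type over $S\cup\{e_G\}$; in particular $Y$ is closed and $G$-invariant, giving (ii) for free (a shift of finite type is by definition a subshift, but one can also note directly that the defining condition is closed in the Tychonoff topology — each constraint involves only finitely many coordinates and the sets $\Phi_{s^{-1}}(U)$ are open, and $G$-invariance follows because the family of constraints is permuted by the $G$-action).

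For (iv): $\Or(\U)$ is closed by definition (it is the closure of a set). For $G$-invariance it suffices to show the set $O_0:=\{\{U_g\}_{g\in G}\in\U^G:\bigcap_{g\in G}\Phi_{g^{-1}}(U_g)\nempty\}$ is $G$-invariant, since the closure of a $G$-invariant set is $G$-invariant ($\sigma_h$ is a homeomorphism). Fix $h\in G$ and $\{U_g\}\in O_0$ with $x\in\bigcap_g\Phi_{g^{-1}}(U_g)$. Then $\sigma_h(\{U_g\})=\{U_{gh}\}_{g\in G}$, and I claim $\Phi_h(x)\in\bigcap_g\Phi_{g^{-1}}(U_{gh})$: indeed $\Phi_g(\Phi_h(x))=\Phi_{gh}(x)\in U_{gh}$ for every $g$, using $x\in\Phi_{(gh)^{-1}}(U_{gh})$ and the cocycle identity. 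So $\sigma_h(\{U_g\})\in O_0$, proving $O_0$ — hence $\Or(\U)$ — is $G$-invariant, and therefore a subshift of $\U^G$.

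I do not expect a serious obstacle here; the only delicate point is the index bookkeeping in (iii) relating the "intersection nonempty" condition to a set of forbidden $A$-patterns under the convention $\sigma_g(x)(h)=x(hg)$, and keeping the left/right action consistent throughout. Everything else is a routine unwinding of definitions plus the cocycle identity $\Phi_{gh}=\Phi_g\circ\Phi_h$.
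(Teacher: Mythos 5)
Your proposal is correct and follows essentially the same route as the paper: both directions of the equality $\PO(\U)=Y$ via choosing a point $x_g$ in each intersection, closedness and $G$-invariance deduced from the finite-type description with the same forbidden set $\mathcal F=\{P\in\U^{S\cup\{e_G\}}:P(e_G)\cap\bigcap_{s\in S}\Phi_{s^{-1}}(P(s))=\emptyset\}$. You merely spell out details the paper dismisses as "not hard to see" (the $G$-invariance of $\Or(\U)$ and the index bookkeeping for the convention $\sigma_g(x)(h)=x(hg)$); note only that closedness of $Y$ needs just that each constraint is a condition on a finite pattern over a finite alphabet, not openness of the sets $\Phi_{s^{-1}}(U)$.
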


\begin{proof}
	It is not hard to see that $\Or(\U)$ and $\PO(\U)$ is $G$-invariant and
	$$\PO(\U)\subset\{\{U_g\}_{g\in G}\in\U^G:U_g\cap\bigcap_{s\in S}\Phi_{s^{-1}}(U_{sg})\nempty\text{ for any }g\in G\}.$$
	For any sequence $\{U_g\}_{g\in G}$ satisfying that $U_g\cap\bigcap_{s\in S}\Phi_{s^{-1}}(U_{sg})\nempty$ for any $g\in G$. Fix any $x_g$ in $U_g\cap\bigcap_{s\in S}\Phi_{s^{-1}}(U_{sg})$ for each $g\in G$. So for any $s\in S$ and $g\in G$, $x_{g}\in U_g\cap\bigcap_{s'\in S}\Phi_{s'^{-1}}(U_{s'g})\subset \Phi_{s^{-1}}(U_{sg})$, which implies that $\{x_g\}_{g\in G}$ is $(S,\U)$-pseudo-orbit with pattern $\{U_g\}_{g\in G}$.
	
	By the equality, it implies that $\PO(\U)$ is closed. Also, observing that the collection of forbidden patterns $\mathcal{F}=\{P\in\U^{\{e_G\}\cup S}: P(e_G)\cap\bigcap_{s\in S}\Phi_{s^{-1}}(P(s))=\emptyset\}$ is finite, $\PO(\U)$ is shift of finite type over $S\cup \{e_G\}$.
\end{proof}

In the general case, it can be seen that  $$\PO(\U)\subsetneqq\{\{U_g\}_{g\in G}\in\U^G:\Phi_s(U_g)\cap U_{sg}\nempty\text{ for any }g\in G,s\in S\},$$
by noticing that if $\Phi_s(U_g)\cap U_{sg}\nempty$ and $\Phi_{s'}(U_g)\cap U_{s'g}\nempty$ for some $g,s\neq s'$, it may happen that $U_g\cap \Phi_{s^{-1}}(U_{sg})\cap \Phi_{s'^{-1}}(U_{s'g})=\emptyset$, which implies that it does not correspond to any $(S,\U)$-pseudo-orbit.

For some $\mathcal{A}\subset \U\in\FOC(X)$, let $\bigcup\mathcal{A}:=\bigcup_{A\in\mathcal{A}}A$.
The following theorem shows that the dynamics of $(X,G,\Phi)$ can be encoded by a proper systems of covers.

\begin{theorem}\label{t:3.7}
	Let $(X,G,\Phi)$ be a dynamical system and $\{\U_\lambda\}_{\lambda\in\Lambda}$ be a cofinal directed subset of $\FOC(X)$. Then for any $x\in X$, and for any choice $U_\lambda(x)\in\U_\lambda$ such that $x\in U_\lambda(x)$ for each $\lambda\in\Lambda$, we have $\{x\}=\bigcap_{\lambda\in\Lambda}U_\lambda(x)$, and for each $g\in G$,
	$$\{\Phi_g(x)\}=\bigcap_{\lambda\in\Lambda}\bigcup\pi_{e_G}\left(\sigma_g\left(\Or(\U_\lambda)\cap\pi_{e_G}^{-1}(U_\lambda(x))\right)\right).$$
\end{theorem}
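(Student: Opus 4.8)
The plan is to establish the two displayed equalities separately; the first is an easy point‑separation argument, and the second reduces, after one structural observation, to constructing a single well‑chosen finite open cover.

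For the equality $\{x\}=\bigcap_{\lambda\in\Lambda}U_\lambda(x)$, the inclusion ``$\subseteq$'' holds trivially since $x\in U_\lambda(x)$ for every $\lambda$. For the reverse inclusion $\bigcap_\lambda U_\lambda(x)\subseteq\{x\}$, fix $y\neq x$. Since $X$ is Hausdorff there is an open $O$ with $x\in O$ and $y\notin O$, and since $X$ is $T_1$ the set $X\setminus\{x\}$ is open, so $\U_0:=\{O,\,X\setminus\{x\}\}\in\FOC(X)$. By cofinality choose $\lambda$ with $\U_\lambda\succ\U_0$; as $O$ is the only member of $\U_0$ containing $x$, the chosen set $U_\lambda(x)$ is contained in $O$, whence $y\notin U_\lambda(x)$ and therefore $y\notin\bigcap_\lambda U_\lambda(x)$.

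The second equality is the substantial part. First I would record the structural identity
\[
\bigcup\pi_{e_G}\Big(\sigma_g\big(\Or(\U_\lambda)\cap\pi_{e_G}^{-1}(U_\lambda(x))\big)\Big)=\bigcup\big\{U\in\U_\lambda:\ U\cap\Phi_g(U_\lambda(x))\nempty\big\}=:A_\lambda(g).
\]
To prove it, recall that $\Or(\U_\lambda)$ is the closure of the set $D$ of genuine orbit patterns $\{U_h\}_{h\in G}$ (those with $\bigcap_{h}\Phi_{h^{-1}}(U_h)\nempty$); the projection onto the two coordinates $e_G,g$ is continuous into a finite discrete space, hence maps $\Or(\U_\lambda)=\overline D$ and $D$ onto the same subset. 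So a pattern with $U_{e_G}=U_\lambda(x)$ and $U_g=U$ occurs in $\Or(\U_\lambda)$ iff it occurs in $D$, and by taking a witness $w\in\bigcap_h\Phi_{h^{-1}}(U_h)$ and filling in the remaining coordinates arbitrarily, this happens iff $w\in U_\lambda(x)$ and $\Phi_g(w)\in U$, i.e. iff $U\cap\Phi_g(U_\lambda(x))\nempty$; applying $\sigma_g$ and then $\pi_{e_G}$ reads off exactly the coordinate $U_g$, which proves the identity. Granting this, the inclusion $\{\Phi_g(x)\}\subseteq\bigcap_\lambda A_\lambda(g)$ is immediate: for each $\lambda$, $\Phi_g(x)\in\Phi_g(U_\lambda(x))$ and $\Phi_g(x)$ lies in some $U\in\U_\lambda$, so $U\cap\Phi_g(U_\lambda(x))\nempty$ and $\Phi_g(x)\in U\subseteq A_\lambda(g)$. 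For the reverse inclusion, let $y\neq\Phi_g(x)$; I must find $\lambda$ with $y\notin A_\lambda(g)$, i.e. with $U\cap\Phi_g(U_\lambda(x))=\emptyset$ for every $U\in\U_\lambda$ containing $y$. Put $x':=\Phi_{g^{-1}}(y)$, so $x'\neq x$, pick disjoint open sets $O\ni x$ and $O'\ni x'$ by Hausdorffness, so that $\Phi_g(O')$ is an open neighbourhood of $y$, and build a finite open cover $\U$ of $X$ each of whose members containing $x$ lies in $O$ and each of whose members containing $y$ lies in $\Phi_g(O')$: if $x\neq y$, shrink $O,O'$ so that in addition $y\notin O$ and $x\notin\Phi_g(O')$ and take $\U=\{O,\Phi_g(O'),X\setminus\{x,y\}\}$; if $x=y$, note $x\in O\cap\Phi_g(O')$ and take $\U=\{O\cap\Phi_g(O'),\,X\setminus\{x\}\}$. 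Choosing $\lambda$ with $\U_\lambda\succ\U$ then forces $U_\lambda(x)\subseteq O$, and every $U\in\U_\lambda$ with $y\in U$ to satisfy $U\subseteq\Phi_g(O')$, hence $\Phi_{g^{-1}}(U)\subseteq O'$, which is disjoint from $O\supseteq U_\lambda(x)$; therefore $U\cap\Phi_g(U_\lambda(x))=\emptyset$, so $y\notin A_\lambda(g)$, and $y\notin\bigcap_\lambda A_\lambda(g)$.

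The main obstacle is this last step: one must keep in mind that the degenerate case $x=y$ genuinely can occur (we know only $y\neq\Phi_g(x)$, not $y\neq x$), and one has to check that refining the auxiliary cover $\U$ simultaneously controls $U_\lambda(x)$ and every $y$‑containing member of $\U_\lambda$ in the two different directions required so that their $\Phi_{g^{-1}}$‑images are pulled apart. The identity in the third paragraph — that passing to the closure in the definition of $\Or(\U_\lambda)$ does not alter two‑coordinate projections — is the conceptual point that legitimizes replacing the orbit‑space expression by the concrete ``star'' set $A_\lambda(g)$; the rest is routine point‑set topology.
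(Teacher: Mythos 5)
Your proof is correct. The first equality is handled exactly as in the paper (separate $y\neq x$ from $x$ by a two-element open cover and refine). For the second, both you and the paper rest on the same key observation — since the alphabet $\U_\lambda$ is finite and discrete, passing to the closure in the definition of $\Or(\U_\lambda)$ does not change finite sub-patterns, so each element of $\Or(\U_\lambda)\cap\pi_{e_G}^{-1}(U_\lambda(x))$ yields a genuine point $x_\lambda\in U_\lambda(x)$ with $\Phi_g(x_\lambda)$ lying in the $g$-th coordinate — but you finish differently. The paper extracts from this a net $x_\lambda\to x$ with $\Phi_g(x_\lambda)\to z$ and concludes $z=\Phi_g(x)$ by continuity of $\Phi_g$ and uniqueness of limits in a Hausdorff space; you instead rewrite the $\lambda$-th term as the star-type set $A_\lambda(g)$ and, for $y\neq\Phi_g(x)$, construct one explicit cover (pulling $y$ back through $\Phi_{g^{-1}}$, with the careful case split $x=y$ versus $x\neq y$) whose refinements exclude $y$. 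Your route is more elementary and avoids net convergence; the paper's is shorter once one grants that a cofinal family of covers determines convergence of such nets. One small inaccuracy worth fixing: the identity $\bigcup\pi_{e_G}\bigl(\sigma_g(\Or(\U_\lambda)\cap\pi_{e_G}^{-1}(U_\lambda(x)))\bigr)=A_\lambda(g)$ fails for $g=e_G$ (the left side is just $U_\lambda(x)$ while the right side is its star in $\U_\lambda$), because the coordinates $e_G$ and $g$ then coincide and cannot be filled in independently. This is harmless: the inclusion $\subseteq$, which is all your separation argument needs, holds for every $g$, and the membership $\Phi_g(x)\in\bigcup\pi_{e_G}\bigl(\sigma_g(\Or(\U_\lambda)\cap\pi_{e_G}^{-1}(U_\lambda(x)))\bigr)$ is direct for every $g$ by taking the orbit pattern of $x$ itself with $e_G$-coordinate $U_\lambda(x)$.
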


\begin{proof}
	First, it is easy to see that $x\in \bigcap_{\lambda\in\Lambda}U_\lambda(x)$. If there is another point $y\neq x$ with $y\in\bigcap_{\lambda\in\Lambda}U_\lambda(x)$, we can find an open cover $\{U,V\}$ such that $x\in U\setminus V$ and $y\notin U$ since $X$ is a compact Hausdorff space. Because  $\{\U_\lambda\}_{\lambda\in\Lambda}$ is a cofinal directed subset of $\FOC(X)$, there exists $\lambda\in\Lambda$ such that $\U_\lambda\succ \{U,V\}$, which implies that $U_\lambda(x)\subset U$ and $y\notin U_\lambda(x)$. Therefore, $\{x\}=\bigcap_{\lambda\in\Lambda}U_\lambda(x)$.
	
	For any $g\in G$, fix any $z\in\bigcap_{\lambda\in\Lambda}\bigcup\pi_e\left(\sigma_g\left(\Or(\U_\lambda)\cap\pi_{e_G}^{-1}(U_\lambda(x))\right)\right)$. Then for any $\lambda\in\Lambda$, there exist $x_\lambda\in U_\lambda(x)$ and $U^{(\lambda)}_g\in\U_\lambda$ such that $\Phi_g(x_\lambda),z\in U^{(\lambda)}_g$. Thus $z\in\bigcap_{\lambda\in\Lambda}U^{(\lambda)}_g$, and $\Phi_g(x_\lambda)$ converges to $z$. Since $x_\lambda$ converges to $x$, we have $\Phi_g(x)=z$. It ends the proof by the fact that $\Phi_g(x)\in\bigcap_{\lambda\in\Lambda}\bigcup\pi_{e_G}\left(\sigma_g\left(\Or(\U_\lambda)\cap\pi_{e_G}^{-1}(U_\lambda(x))\right)\right)$.
\end{proof}

\section{$S$-Shadowing and shifts of finite type}

\newcommand{\Part}{\mathrm{Part}}

In this section, for the dynamical system $(X,G,\Phi)$, we consider two cases: the space $X$ is compact totally disconnected Hausdorff space or compact metric space. We always assume that $S$ is a finite subset of $G$.

\subsection{Shadowing in totally disconnected spaces}

For a totally disconnected space, the finite open covers of $X$ can be taken as clopen \emph{partitions}, that is, all the elements are closed, open and pairwise disjoint. Denote by $\Part(X)$ the collection of all the finite clopen partitions of $X$. So $\Part(X)$ is a cofinal directed subset of $\FOC(X)$ for the order $\prec$.
For two open covers $\U,\V$ of $(X,G,\Phi)$ with $\U\prec\V$, define a map $\iota:\V\To\U$ such that $V\cap\iota(V)\nempty$. If $\U,\V\in\Part(X)$, it means that $V\subset\iota(V)$ and $\iota$ is well defined. Also, we can define $\iota^G:\V^G\To\U^G$ by $\iota^G(\{V_g\}_{g\in G})=\{\iota(V_g)\}_{g\in G}$. We also use $\iota$ to denote $\iota^G$ for convenience if there is no confusion.

If $\U\in\Part(X)$, then we have
$$\Or(\U)=\{\{U_g\}_{g\in G}\in\U^G:\bigcap_{g\in G}\Phi_{g^{-1}}(U_g)\nempty\}.$$

\begin{lemma}
	Let $(X,G,\Phi)$ be a dynamical system, where $X$ is compact totally disconnected Hausdorff space, and $S$ be a finite subset of $G$. For $\U,\V\in\Part(X)$ with $\V\succ\U$, $\sigma$ and $\iota$ commute, and
	\begin{enumerate}
		\item[(1)] $\iota(\Or(\V))=\Or(\U)$;
		\item[(2)] $\Or(\U)\subset\iota(\PO(\V))\subset\PO(\U)$.
	\end{enumerate}
\end{lemma}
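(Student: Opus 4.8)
The plan is to use throughout that, because $\U,\V\in\Part(X)$ with $\V\succ\U$, every cell $V\in\V$ lies inside a \emph{unique} cell of $\U$, so the map $\iota\colon\V\To\U$ (and hence $\iota^G\colon\V^G\To\U^G$, acting coordinatewise) is genuinely single valued and satisfies $V\subseteq\iota(V)$ --- this strengthening of $V\cap\iota(V)\nempty$ is what makes the whole lemma go through. The commutation of $\sigma$ and $\iota$ is pure coordinate bookkeeping: for $\{V_h\}_{h\in G}\in\V^G$ and $g\in G$ one checks on each coordinate $h$ that $\iota^G(\sigma_g\{V_h\})(h)=\iota(V_{hg})=\iota^G(\{V_h\})(hg)=\sigma_g(\iota^G\{V_h\})(h)$, so I would dispose of this in one line. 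I will also use freely the two descriptions already recorded before the lemma and in Lemma~\ref{l:3.6}: for a partition $\W$, $\Or(\W)=\{\{W_g\}_{g\in G}\in\W^G:\bigcap_{g\in G}\Phi_{g^{-1}}(W_g)\nempty\}$, i.e. $\{W_g\}\in\Or(\W)$ iff there is $x\in X$ with $\Phi_g(x)\in W_g$ for all $g$; and $\{W_g\}\in\PO(\W)$ iff $W_g\cap\bigcap_{s\in S}\Phi_{s^{-1}}(W_{sg})\nempty$ for all $g$.

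For (1), the inclusion $\iota(\Or(\V))\subseteq\Or(\U)$ is immediate: if $x$ witnesses $\{V_g\}\in\Or(\V)$, then $\Phi_g(x)\in V_g\subseteq\iota(V_g)$ for all $g$, so the same $x$ witnesses $\iota^G(\{V_g\})=\{\iota(V_g)\}\in\Or(\U)$. For the reverse inclusion I would take $\{U_g\}\in\Or(\U)$ with witness $x$ and \emph{define} $V_g$ to be the unique cell of $\V$ containing $\Phi_g(x)$; then $x$ witnesses $\{V_g\}\in\Or(\V)$, and since $\Phi_g(x)\in V_g\subseteq\iota(V_g)$ while also $\Phi_g(x)\in U_g$, disjointness of the partition $\U$ forces $\iota(V_g)=U_g$, so $\{U_g\}=\iota^G(\{V_g\})\in\iota(\Or(\V))$. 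This "uniqueness" step --- identifying $\iota(V_g)$ with $U_g$ --- is the only place any real thought is required, and it is where the partition hypothesis on $\U$ (not merely on $\V$) is used.

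For (2), I would first get $\Or(\U)\subseteq\iota(\PO(\V))$ for free from (1) together with the obvious $\Or(\V)\subseteq\PO(\V)$: the orbit $\{\Phi_g(x)\}_{g\in G}$ of any witness $x$ is itself an $(S,\V)$-pseudo-orbit realising its own $\V$-pattern, whence $\Or(\V)\subseteq\PO(\V)$ and so $\Or(\U)=\iota(\Or(\V))\subseteq\iota(\PO(\V))$. For $\iota(\PO(\V))\subseteq\PO(\U)$, given $\{V_g\}\in\PO(\V)$ realised by an $(S,\V)$-pseudo-orbit $\{x_g\}_{g\in G}$, the very same sequence $\{x_g\}$ is an $(S,\U)$-pseudo-orbit with pattern $\{\iota(V_g)\}_{g\in G}$, since $x_{sg},\Phi_s(x_g)\in V_{sg}\subseteq\iota(V_{sg})$; equivalently, one may invoke the cell-wise description from Lemma~\ref{l:3.6} via the monotonicity $\emptyset\neq V_g\cap\bigcap_{s\in S}\Phi_{s^{-1}}(V_{sg})\subseteq\iota(V_g)\cap\bigcap_{s\in S}\Phi_{s^{-1}}(\iota(V_{sg}))$. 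Either way $\iota^G(\{V_g\})\in\PO(\U)$, completing the chain. I expect no substantive obstacle here: the statement is essentially bookkeeping, and its only subtlety is keeping careful track of which of $\U,\V$ is being used as a partition at each step.
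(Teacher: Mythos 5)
Your proposal is correct and follows essentially the same route as the paper: the same use of $V\subseteq\iota(V)$ for partitions, the same witness-point argument with the disjointness of $\U$ forcing $\iota(V_g)=U_g$ in the reverse inclusion of (1), and the same observation that an $(S,\V)$-pseudo-orbit realises the pattern $\{\iota(V_g)\}_{g\in G}$ for (2). The only difference is that you spell out the coordinatewise commutation of $\sigma$ and $\iota$, which the paper dismisses as obvious.
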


\begin{proof}
	It is obvious that $\sigma$ and $\iota$ commute by the definition of $\iota$.
	
	To prove (1), fix any $\{V_g\}_{g\in G}\in\Or(\V)$. Then there exists $x\in X$ such that $\Phi_g(x)\in V_g$ for any $g\in G$. Since $V_g\subset\iota(V_g)$, we have $\bigcap_{g\in G}\Phi_{g^{-1}}(\iota(V_g))\nempty$, which implies that $\iota(\{V_g\}_{g\in G})\in\Or(\U)$. For the converse, fix any $\{U_g\}_{g\in G}\in\Or(\U)$. Then there exists $x\in X$ such that $\Phi_g(x)\in U_g$ for any $g\in G$. Also, for any $g\in G$, there exists $V_g\in\V$ such that $\Phi_g(x)\in V_g$. Since $\Phi_g(x)\in U_g\cap V_g$, we have $\iota(V_g)=U_g$, that is, $\iota(\{V_g\}_{g\in G})=\{U_g\}_{g\in G}$.
	
	For (2), we have $\Or(\U)=\iota(\Or(\V))\subset\iota(\PO(\V))$ since $\Or(\V)\subset\PO(\V)$. Fix any $\{V_g\}_{g\in G}\in\PO(\V)$. Then there exists $(S,\V)$-pseudo-orbit $\{x_g\}_{g\in G}$ such that for any $s\in S$ and $g\in G$, $\Phi_s(x_g), x_{sg}\in V_{sg}\subset \iota(V_{sg})\in\U$, which implies that $\{x_g\}_{g\in G}$ is also $(S,\U)$-pseudo-orbit with pattern $\{\iota(V_g)\}_{g\in G}$. It is clear that $\iota(\PO(\V))\subset\PO(\U)$.
\end{proof}

By Theorem \ref{t:3.7}, we get the following corollary.

\begin{corollary}\label{c:4.2}
	Let $(X,G,\Phi)$ be a dynamical system, where $X$ is compact totally disconnected Hausdorff space. If $\{\U_\lambda\}_{\lambda\in\Lambda}$ is a cofinal directed subset of $\Part(X)$, then $(X,G,\Phi)$ is conjugate to $(\ilim\{\iota,\Or(\U_\lambda)\},G,\sigma^*)$ by the map $$\{u_\lambda\}_{\lambda\in\Lambda}\mapsto \bigcap_{\lambda\in\Lambda}\pi_{e_G}(u_\lambda).$$
\end{corollary}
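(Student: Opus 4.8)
The plan is to verify that the map $\psi:\ilim\{\iota,\Or(\U_\lambda)\}\To X$ given by $\{u_\lambda\}_{\lambda\in\Lambda}\mapsto \bigcap_{\lambda\in\Lambda}\pi_{e_G}(u_\lambda)$ is a well-defined homeomorphism that intertwines $\sigma^*$ with $\Phi$. First I would check that $\psi$ is well-defined: for a coherent thread $\{u_\lambda\}$ in the inverse limit, coherence under $\iota$ means $\iota(u_\eta)=u_\lambda$ whenever $\lambda\le\eta$, hence $\pi_{e_G}(u_\eta)\subset\pi_{e_G}(u_\lambda)$, so $\{\pi_{e_G}(u_\lambda)\}_{\lambda\in\Lambda}$ is a downward-directed family of nonempty clopen sets in the compact space $X$; by the finite intersection property the intersection is nonempty, and since $\{\U_\lambda\}$ is cofinal in $\Part(X)$ — which separates points of the compact Hausdorff $X$ — Theorem \ref{t:3.7} (its first conclusion, $\{x\}=\bigcap_{\lambda}U_\lambda(x)$) forces the intersection to be a single point. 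So $\psi$ is a well-defined map into $X$.

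Next I would show $\psi$ is a bijection. For surjectivity, given $x\in X$, set $u_\lambda(x)=\{U_\lambda(\Phi_g(x))\}_{g\in G}$ where $U_\lambda(y)$ is the unique cell of the partition $\U_\lambda$ containing $y$; this lies in $\Or(\U_\lambda)$ by definition of the orbit space (witnessed by $x$ itself), and the compatibility $\iota(u_\eta(x))=u_\lambda(x)$ holds because for partitions $\iota$ is just ``pass to the coarser cell containing you'', and $U_\eta(\Phi_g(x))\subset U_\lambda(\Phi_g(x))$. Then $\psi(\{u_\lambda(x)\})=\bigcap_\lambda U_\lambda(x)=\{x\}$. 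For injectivity, suppose $\{u_\lambda\}$ and $\{v_\lambda\}$ both map to $x$. I would argue that each is forced to equal $\{u_\lambda(x)\}$: from $x\in\pi_{e_G}(u_\lambda)$ and the pairwise-disjointness of $\U_\lambda$ we get $\pi_{e_G}(u_\lambda)=U_\lambda(x)$; to pin down the other coordinates $\pi_g(u_\lambda)$ I would use that $u_\lambda\in\Or(\U_\lambda)$, so $u_\lambda$ is a limit of genuine orbit patterns $\{U_\lambda(\Phi_g(x^{(k)}))\}_g$ with $x^{(k)}\to$ (a point whose $e_G$-cell is $U_\lambda(x)$), combined with the second conclusion of Theorem \ref{t:3.7}, which identifies $\Phi_g(x)$ as the unique point in $\bigcap_\lambda\bigcup\pi_{e_G}(\sigma_g(\Or(\U_\lambda)\cap\pi_{e_G}^{-1}(U_\lambda(x))))$ — this exactly forces $\pi_g(u_\lambda)=U_\lambda(\Phi_g(x))$ once one observes a thread's $g$-coordinate projects into that set. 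Being a continuous (coordinatewise-locally-constant) bijection between compact Hausdorff spaces, $\psi$ is a homeomorphism; continuity of $\psi^{-1}$ is automatic, and continuity of $\psi$ follows since $\psi^{-1}(U_\lambda) = \pi_{\U_\lambda}^{-1}(\pi_{e_G}^{-1}(U_\lambda))\cap \ilim$ is open.

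Finally I would check equivariance: $\psi(\sigma^*_g\{u_\lambda\})=\psi(\{\sigma_g u_\lambda\})=\bigcap_\lambda \pi_{e_G}(\sigma_g u_\lambda)=\bigcap_\lambda \pi_g(u_\lambda)$, and by Theorem \ref{t:3.7} this equals $\{\Phi_g(\psi\{u_\lambda\})\}$, so $\psi\circ\sigma^*_g=\Phi_g\circ\psi$. This makes $\psi$ a conjugacy, and since each $\iota:\Or(\U_\eta)\To\Or(\U_\lambda)$ is surjective by part (1) of the preceding lemma, the inverse system is automatically ML, consistent with the statement.

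The main obstacle I anticipate is the injectivity argument, specifically showing that a thread $\{u_\lambda\}$ with $\psi(\{u_\lambda\})=x$ is uniquely determined in \emph{all} coordinates $g$, not just at $e_G$. The delicate point is that $\Or(\U_\lambda)$ is a \emph{closure}, so an element of it need not itself be a literal orbit pattern; one must push the point-separation argument of Theorem \ref{t:3.7} through limits and across the coherence maps $\iota$ simultaneously. Getting the bookkeeping right — that the $g$-coordinate of the coherent thread lands inside $\bigcup\pi_{e_G}(\sigma_g(\Or(\U_\lambda)\cap\pi_{e_G}^{-1}(U_\lambda(x))))$ for every $\lambda$ — is where the real content lies; everything else is routine compactness and the elementary behaviour of $\iota$ on partitions.
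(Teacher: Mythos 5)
Your proof is correct and follows essentially the same route as the paper, which proves this corollary by combining Theorem \ref{t:3.7} with the behaviour of $\iota$ on clopen partitions. The one ``delicate point'' you flag --- that an element of $\Or(\U_\lambda)$ need not be a literal orbit pattern because the orbit space is defined as a closure --- does not actually arise: as the paper observes just before Lemma 4.1, for $\U\in\Part(X)$ one has $\Or(\U)=\{\{U_g\}_{g\in G}\in\U^G:\bigcap_{g\in G}\Phi_{g^{-1}}(U_g)\nempty\}$ with no closure needed, so each $u_\lambda$ in a thread is witnessed by an actual point $y_\lambda\in U_\lambda(x)$, and the injectivity bookkeeping you describe goes through directly.
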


Here, we use the same denotation $\iota$ to denote the map $\iota:\Or(\U_\lambda)\To \Or(\U_\eta)$ for each $\U_\lambda\succ\U_\eta$. Since for any $\U_\lambda\succ\U_\eta\succ\U_\gamma$, the composition of $\iota:\U_\lambda\To\U_\eta$ and $\iota:\U_\eta\To\U_\gamma$ equals to $\iota:\U_\lambda\To\U_\gamma$, the inverse limit of $(\iota, \Or(\U_\lambda))$ is well defined.
\begin{lemma}\label{l:3.4}
	Let $(X,G,\Phi)$ be a dynamical system, where $X$ is compact totally disconnected Hausdorff space. Then $\Phi$ has $S$-shadowing property if and only if for any $\U\in\Part(X)$ there exists $\V\in\Part(X)$ with $\V\succ\U$ such that for any $\W\in\Part(X)$ with $\W\succ\V$, $\iota(\PO(\W))=\Or(\U)$.
\end{lemma}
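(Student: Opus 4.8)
The plan is to translate the topological $S$-shadowing property (phrased in terms of finite open covers and the refinement preorder, as established by the lemma preceding Theorem \ref{t:3.4}) into the language of the orbit space $\Or(\U)$ and pseudo-orbit space $\PO(\U)$, using the identification from Corollary \ref{c:4.2} that in the totally disconnected case we may restrict attention to clopen partitions and that $(X,G,\Phi)$ is conjugate to $\ilim\{\iota,\Or(\U_\lambda)\}$. The key dictionary is: a $(S,\V)$-pseudo-orbit with pattern $\{V_g\}_{g\in G}$ is exactly an element of $\PO(\V)$, and saying a point $z$ $\U$-shadows it means $\iota(\{V_g\})$ — obtained by enlarging each $V_g$ to the unique $\iota(V_g)\in\U$ containing it — lies in $\Or(\U)$ via the point $z$. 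Since for partitions the inclusion $\iota(\PO(\W))\subset\PO(\U)$ and $\Or(\U)=\iota(\Or(\W))\subset\iota(\PO(\W))$ always hold (by the lemma just above), the content of the condition $\iota(\PO(\W))=\Or(\U)$ is precisely the reverse inclusion $\iota(\PO(\W))\subset\Or(\U)$, i.e. every $\W$-pseudo-orbit pattern, after coarsening to $\U$, is realized by an actual orbit — which is the shadowing statement.

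First I would prove the forward direction. Assume $\Phi$ has $S$-shadowing property and fix $\U\in\Part(X)$. Apply the topological characterization to get $\V_0\in\FOC(X)$ witnessing shadowing with respect to $\U$; then pass to a common refinement $\V\in\Part(X)$ with $\V\succ\V_0$ and $\V\succ\U$ (possible since $\Part(X)$ is cofinal and directed), noting that a finer witness is still a witness. Now take any $\W\in\Part(X)$ with $\W\succ\V$, and any $\{W_g\}_{g\in G}\in\PO(\W)$, say with underlying $(S,\W)$-pseudo-orbit $\{x_g\}_{g\in G}$. Because $\W\succ\V$, coarsening the pattern shows $\{x_g\}_{g\in G}$ is also an $(S,\V)$-pseudo-orbit, so by the witnessing property there is $z\in X$ that $\U$-shadows it: for each $g$ there is $U_g\in\U$ with $\Phi_g(z),x_g\in U_g$. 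Since $x_g\in W_g$ and $\W$ refines $\U$, this $U_g$ must equal $\iota(W_g)$ (uniqueness of the containing block in a partition), so $\{\iota(W_g)\}_{g\in G}\in\Or(\U)$ witnessed by $z$. Hence $\iota(\PO(\W))\subset\Or(\U)$, and combined with the always-true inclusions we get equality.

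For the converse, assume the partition condition and verify the topological shadowing property. Given $\U\in\FOC(X)$, refine it by some $\U_0\in\Part(X)$ with $\U_0\succ\U$, apply the hypothesis to obtain $\V\in\Part(X)$ with $\V\succ\U_0$, and claim $\V$ witnesses shadowing with respect to $\U$. Take any $(S,\V)$-pseudo-orbit $\{x_g\}_{g\in G}$; since $\V$ is a partition, its pattern $\{V_g\}_{g\in G}$ (with $x_g\in V_g$) is determined and lies in $\PO(\V)$ — here I would invoke the remark in the excerpt that for pairwise disjoint covers such a sequence automatically is an $(S,\V)$-pseudo-orbit with a well-defined pattern. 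Taking $\W=\V$ in the hypothesis (legitimately, since $\V\succ\V$), we get $\iota(\{V_g\}_{g\in G})\in\Or(\U_0)$, i.e. there is $z\in X$ with $\Phi_g(z)\in\iota(V_g)\in\U_0$ for all $g$; and since $x_g\in V_g\subset\iota(V_g)$, the point $z$ $\U_0$-shadows $\{x_g\}$, hence $\U$-shadows it as $\U_0\succ\U$. I expect the main obstacle to be bookkeeping the refinement/coarsening maps carefully — in particular making sure the witness partition for shadowing can always be taken finer than any prescribed partition, and correctly identifying, at each coarsening step, that the enlarged block $\iota(W_g)$ is forced to be the one appearing in the shadowing conclusion rather than merely some block of $\U$; the disjointness of partition elements is exactly what makes these identifications unambiguous.
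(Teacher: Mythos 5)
Your proposal is correct and follows essentially the same route as the paper's proof: both directions hinge on the same dictionary (a shadowing point $z$ for a pseudo-orbit with pattern $\{V_g\}$ forces, by disjointness of partition blocks, $U_g=\iota(V_g)$, so $\iota(\{V_g\})\in\Or(\U)$; conversely a point of $\bigcap_g\Phi_{g^{-1}}(\iota(V_g))$ is the shadowing point). You are in fact slightly more explicit than the paper in handling arbitrary $\W\succ\V$ and in invoking the standing inclusions $\Or(\U)\subset\iota(\PO(\W))\subset\PO(\U)$ for the reverse containment, which the paper leaves implicit.
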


\begin{proof}
	First, suppose that $\Phi$ has $S$-shadowing property. Then for any $\U\in\Part(X)$, there exists $\V\in\FOC(X)$ which witnesses the $S$-shadowing property. Without loss of generality, we can take $\V\in\Part(X)$ and $\V\succ\U$. Fix any $(S,\V)$-pseudo-orbit $\{x_g\}_{g\in G}$ with pattern $\{V_g\}_{g\in G}$. By the $S$-shadowing property, there exists $z\in X$ $\U$-shadowing $\{x_g\}_{g\in G}$, that is, for any $g\in G$, $\Phi_g(z),x_g\in U_g$ for some $U_g\in\U$. Thus $x_g\in V_g\cap U_g$, which implies that $\iota(V_g)=U_g$. Noticing that $z\in\bigcap_{g\in G}\Phi_{g^{-1}}(U_g)$, it means that $\iota(\PO(\V))\subset\Or(\U)$.
	
	For the converse, fix any $\U\in\FOC(X)$. We take $\U'\in\Part(X)$ with $\U'\succ\U$. Then by the condition, there exists $\V\in\Part(X)$ with $\V\succ\U$ such that for any $\W\in\Part(X)$ with $\W\succ\V$, $\iota(\PO(\W))=\Or(\U')$. In particular, $\iota(\PO(\V))=\Or(\U')$. For any $(S,\V)$-pseudo-orbit $\{x_g\}_{g\in G}$ with pattern $\{V_g\}_{g\in G}$, we have  $\iota(\{V_g\}_{g\in G})\in\Or(\U')$. Take $z\in\bigcap_{g\in G}\Phi_{g^{-1}}(\iota(V_g))\nempty$. Thus $\Phi_g(z),x_g\in\iota(V_g)\in\U'$ for any $g\in G$, which implies that $z$ $\U'$-shadows $\{x_g\}_{g\in G}$. Also, $z$ $\U$-shadows $\{x_g\}_{g\in G}$ by $\U'\succ\U$.
\end{proof}

With the $S$-shadowing property, we can prove that the inverse limit of pseudo-orbit spaces is conjugate to the inverse limit of orbit spaces.
\begin{theorem}\label{t:4.4}
	Let $(X,G,\Phi)$ be a dynamical system, where $X$ is compact totally disconnected Hausdorff space, and $S$ be a finite subset of $G$. Let $\{\U_\lambda\}_{\lambda\in\Lambda}$ is a cofinal directed subset of $\Part(X)$. If $\Phi$ has $S$-shadowing property, then the system $(\ilim\{\iota,\PO(\U_\lambda)\},G,\sigma^*)$ is conjugate to $(\ilim\{\iota,\Or(\U_\lambda)\},G,\sigma^*)$.
	Also, those two systems satisfy the ML condition.
\end{theorem}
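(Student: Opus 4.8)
The plan is to build the conjugacy coordinate-wise and then verify the ML condition as a consequence. First I would fix notation: write $PO(\U_\lambda)$ and $\Or(\U_\lambda)$ for the pseudo-orbit and orbit spaces, and recall from the earlier lemma that $\Or(\U_\lambda)\subset\iota(PO(\U_\eta))\subset PO(\U_\lambda)$ whenever $\U_\eta\succ\U_\lambda$. Since $\Or(\U_\lambda)\subset PO(\U_\lambda)$ for every $\lambda$, the inclusion maps give a natural map $\iota_*\colon\ilim\{\iota,\Or(\U_\lambda)\}\To\ilim\{\iota,PO(\U_\lambda)\}$ which is continuous, injective, and $G$-equivariant (it commutes with $\sigma^*$ because $\iota$ and $\sigma$ commute, and the inclusions are equivariant). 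The whole point is to show $\iota_*$ is also surjective; since both spaces are compact Hausdorff this will give the conjugacy.

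For surjectivity, take $(u_\lambda)_{\lambda\in\Lambda}\in\ilim\{\iota,PO(\U_\lambda)\}$, so each $u_\lambda\in PO(\U_\lambda)$ and $\iota(u_\eta)=u_\lambda$ for $\eta\succ\lambda$. I want to show each $u_\lambda$ actually lies in $\Or(\U_\lambda)$. Fix $\lambda$; by Lemma \ref{l:3.4} (the $S$-shadowing criterion), there is $\V\in\Part(X)$ with $\V\succ\U_\lambda$ such that for every $\W\succ\V$ one has $\iota(PO(\W))=\Or(\U_\lambda)$. Using cofinality of $\{\U_\gamma\}$ in $\Part(X)$, pick $\gamma$ with $\U_\gamma\succ\V$ (and $\U_\gamma\succ\U_\lambda$); then $u_\lambda=\iota(u_\gamma)\in\iota(PO(\U_\gamma))=\Or(\U_\lambda)$. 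Thus every coordinate of the given point lies in the orbit space, so the point is in the image of $\iota_*$. This establishes the conjugacy.

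For the ML condition, I would argue directly at the level of the bonding maps $\iota$. For $\ilim\{\iota,\Or(\U_\lambda)\}$: given $\lambda$, choose (by Lemma \ref{l:3.4} and cofinality) some $\eta\succ\lambda$ with $\U_\eta$ refining the witness cover $\V$ for $\lambda$; then for every $\gamma\succ\eta$ we have $\U_\gamma\succ\V$, hence $\iota(\Or(\U_\gamma))=\iota(\iota(PO(\U_\gamma)))=\iota(PO(\U_\gamma))=\Or(\U_\lambda)$ — wait, more carefully: $\iota\colon\Or(\U_\gamma)\To\Or(\U_\lambda)$ factors through $\Or(\U_\eta)$, and since part (1) of the preceding lemma gives $\iota(\Or(\U_\gamma))=\Or(\U_\eta)$ when mapping to the $\eta$-level and then $\iota(\Or(\U_\eta))=\Or(\U_\lambda)$, the composite image is $\Or(\U_\lambda)$, independent of $\gamma$; this is the ML condition. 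For $\ilim\{\iota,PO(\U_\lambda)\}$: given $\lambda$, take $\eta\succ\lambda$ as in Lemma \ref{l:3.4}; then for all $\gamma\succ\eta$, $\iota(PO(\U_\gamma))=\Or(\U_\lambda)$, again independent of $\gamma$, so this system satisfies ML as well. (Alternatively, once the two inverse limits are conjugate and one of them is ML, transporting the witness indices along the identification gives ML for the other.)

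The main obstacle I anticipate is bookkeeping with the two distinct uses of $\iota$ — as a map $\V\To\U$ between single covers and as the induced map on the shift spaces and on $\Or/PO$ — together with making sure every application of Lemma \ref{l:3.4} is invoked at a cofinal index so that $\U_\gamma\succ\V$ can genuinely be arranged; none of the individual steps is deep, but the transitivity bookkeeping ($\iota$ from level $\gamma$ to $\lambda$ equals the composite through $\eta$, so images stabilize) is where care is needed. A minor additional point is checking that $\iota_*$ is well-defined into the inverse limit, i.e. that the inclusion-induced coordinates are still compatible with the bonding maps — this is immediate since the bonding maps on both systems are literally the same $\iota$ and the inclusions $\Or(\U_\lambda)\hookrightarrow PO(\U_\lambda)$ intertwine them.
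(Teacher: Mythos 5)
Your proposal is correct and follows essentially the same route as the paper: both reduce the conjugacy to Lemma \ref{l:3.4} combined with cofinality of $\{\U_\lambda\}$ in $\Part(X)$, show that the inclusion-induced map between the two inverse limits is a continuous equivariant bijection, and derive the ML condition for the $\PO$ system from $\iota(\PO(\U_\gamma))=\Or(\U_\lambda)$ and for the $\Or$ system from surjectivity of the bonding maps. The only cosmetic difference is that the paper exhibits an explicit one-sided inverse $\psi(\{u_\lambda\})=\{\iota(u_{p(\lambda)})\}$ via a monotone choice function $p$, whereas you verify surjectivity by showing directly that every coordinate of a point of $\ilim\{\iota,\PO(\U_\lambda)\}$ already lies in the corresponding orbit space.
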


\begin{proof}
	Since $\Or(\U_\lambda)\subset\PO(\U_\lambda)$, the inclusion map $j:\Or(\U_\lambda)\mapsto\PO(\U_\lambda)$ induces the map $j^*:\ilim\{\iota,\Or(\U_\lambda)\}\mapsto\ilim\{\iota,\PO(\U_\lambda)\}$ by $j^*(\{u_\lambda\}_{\lambda\in\Lambda})=\{u_\lambda\}_{\lambda\in\Lambda}$, which is continuous injection and commute with $\sigma^*$. We will show that $j^*$ is surjection.
	
	By Lemma \ref{l:3.4}, we can take a monotone function $p:\Lambda\to\Lambda$ such that $\iota(\PO(\U_{p(\lambda)}))=\Or(\U_\lambda)$ and $\lambda\le p(\lambda)$, where $\lambda\le\eta$ if and only if $\U_\lambda\prec\U_\eta$. Thus we define $\psi:\ilim\{\iota,\PO(\U_\lambda)\}\mapsto\ilim\{\iota,\Or(\U_\lambda)\}$ by
	$$\psi(\{u_\lambda\}_{\lambda\in\Lambda})=\{\iota(u_{p(\lambda)})\}_{\lambda\in\Lambda}.$$
	
	It is well defined and continuous since $\iota(\PO(\U_{p(\lambda)}))=\Or(\U_\lambda)$, and commutes with $\sigma^*$.
	
	For each $\lambda\in\Lambda$ and $\{u_\gamma\}$, we have $(j^*\circ\psi(\{u_\gamma\}))_\lambda=(j^*(\{\iota(u_{p(\gamma)})\}))_\lambda=\iota(u_{p(\lambda)})=u_\lambda$. Thus $j^*\circ\psi$ is the identity map on $\ilim\{\iota,\PO(\U_\lambda)\}$, which implies that $j^*$ is surjection. It means that $(\ilim\{\iota,\PO(\U_\lambda)\},G,\sigma^*)$ is conjugate to $(\ilim\{\iota,\Or(\U_\lambda)\},G,\sigma^*)$.
	
	For the system $\PO(\U_\lambda)$, by Lemma \ref{l:3.4}, there exists $\eta\ge\lambda$ such that for any $\gamma\ge\eta$, $\iota(\PO(\U_\gamma))=\Or(\U_\lambda)=\iota(\PO(\U_\eta))$, which shows that $(\ilim\{\iota,\PO(\U_\lambda)\},G,\sigma^*)$ satisfies the ML condition. On the other hand, $(\ilim\{\iota,\Or(\U_\lambda)\},G,\sigma^*)$ satisfies the ML condition since $\iota: \Or(\U_\eta)\To\Or(\U_\lambda)$ is surjective for any $\eta\ge\lambda$.
\end{proof}

Combining Lemma \ref{l:3.6}, Corollary \ref{c:4.2} and Theorem \ref{t:4.4}, we get the following result.

\begin{corollary}\label{c:4.5}
	Suppose that $(X,G,\Phi)$ is a dynamical system, where $X$ is compact totally disconnected Hausdorff space, and $S$ be a finite subset of $G$. If $\Phi$ has $S$-shadowing property, then $(X,G,\Phi)$ is conjugate to the inverse limit of an ML inverse system of shifts of finite type over $S\cup \{e_G\}$.
\end{corollary}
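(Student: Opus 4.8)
The plan is to assemble Corollary \ref{c:4.5} purely by chaining together the three results that precede it, so the proof is essentially bookkeeping about which intermediate systems are conjugate and which satisfy the ML condition. First I would fix a cofinal directed subset $\{\U_\lambda\}_{\lambda\in\Lambda}$ of $\Part(X)$, which exists because $X$ is compact totally disconnected Hausdorff (so clopen partitions are cofinal in $\FOC(X)$, as noted at the start of Section 4.1). By Corollary \ref{c:4.2}, the system $(X,G,\Phi)$ is conjugate to $(\ilim\{\iota,\Or(\U_\lambda)\},G,\sigma^*)$ via the map $\{u_\lambda\}_{\lambda\in\Lambda}\mapsto\bigcap_{\lambda\in\Lambda}\pi_{e_G}(u_\lambda)$. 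This already realizes $(X,G,\Phi)$ as an inverse limit of subshifts, but the $\Or(\U_\lambda)$ need not be shifts of finite type.

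Next I would invoke Theorem \ref{t:4.4}: since $\Phi$ has $S$-shadowing property, $(\ilim\{\iota,\PO(\U_\lambda)\},G,\sigma^*)$ is conjugate to $(\ilim\{\iota,\Or(\U_\lambda)\},G,\sigma^*)$, and both satisfy the ML condition. Composing this conjugacy with the one from Corollary \ref{c:4.2} gives that $(X,G,\Phi)$ is conjugate to $(\ilim\{\iota,\PO(\U_\lambda)\},G,\sigma^*)$, an ML inverse system. Finally, Lemma \ref{l:3.6} tells us that each $\PO(\U_\lambda)$ is a shift of finite type over $S\cup\{e_G\}$, and the bonding maps are the $\iota$, which commute with $\sigma$; so this is an ML inverse system of shifts of finite type over $S\cup\{e_G\}$, exactly as required. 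I would close by remarking that the inverse system indexed by the directed set $\Lambda$ can be taken, since everything is a compact metrizable-or-Hausdorff inverse limit, in the form asserted.

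There is essentially no hard step here — all the substantive work (that $\PO(\U)$ is an SFT, that shadowing forces the pseudo-orbit and orbit limits to agree, that the resulting systems satisfy ML) has been done in Lemma \ref{l:3.6} and Theorem \ref{t:4.4}. The only thing to be slightly careful about is that ``conjugate'' is transitive and that composing the two conjugacies indeed yields a conjugacy onto the desired inverse limit with the correct $G$-action $\sigma^*$; this is immediate from the definition of conjugacy (an injective factor map) together with the fact that each of the maps in Corollary \ref{c:4.2} and Theorem \ref{t:4.4} intertwines the relevant group actions. One should also note explicitly that the existence of a cofinal directed subset of $\Part(X)$ is what makes $\{\U_\lambda\}_{\lambda\in\Lambda}$ available in the first place; after that the argument is a one-line composition.

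\begin{proof}
	Since $X$ is compact totally disconnected Hausdorff, $\Part(X)$ is a cofinal directed subset of $\FOC(X)$; fix a cofinal directed subset $\{\U_\lambda\}_{\lambda\in\Lambda}$ of $\Part(X)$. By Corollary \ref{c:4.2}, $(X,G,\Phi)$ is conjugate to $(\ilim\{\iota,\Or(\U_\lambda)\},G,\sigma^*)$. Since $\Phi$ has $S$-shadowing property, Theorem \ref{t:4.4} gives that $(\ilim\{\iota,\PO(\U_\lambda)\},G,\sigma^*)$ is conjugate to $(\ilim\{\iota,\Or(\U_\lambda)\},G,\sigma^*)$, and that both systems satisfy the ML condition. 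As conjugacy is transitive, $(X,G,\Phi)$ is conjugate to $(\ilim\{\iota,\PO(\U_\lambda)\},G,\sigma^*)$, which is the inverse limit of an ML inverse system. Finally, by Lemma \ref{l:3.6}, each $\PO(\U_\lambda)$ is a shift of finite type over $S\cup\{e_G\}$, and the bonding maps $\iota$ commute with the shift action $\sigma$. Hence $(X,G,\Phi)$ is conjugate to the inverse limit of an ML inverse system of shifts of finite type over $S\cup\{e_G\}$.
\end{proof}
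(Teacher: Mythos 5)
Your proposal is correct and matches the paper's own argument exactly: the paper derives Corollary \ref{c:4.5} precisely by combining Lemma \ref{l:3.6}, Corollary \ref{c:4.2} and Theorem \ref{t:4.4} in the way you describe. Nothing is missing.
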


\begin{proof}[Proof of Theorem \ref{t:1.1}]
	It follows immediately from Corollary \ref{c:4.5} and Theorem \ref{t:3.4}.
\end{proof}

%\begin{theorem}
	%Suppose that $(X,G,\Phi)$ is a dynamical system, where $X$ is compact totally disconnected Hausdorff space, and $S$ be a finite subset of $G$. Then $\Phi$ has $S$-shadowing property if and only if $(X,G,\Phi)$ is conjugate to the inverse limit of an ML inverse system of shifts of finite type over $S\cup \{e_G\}$.
%\end{theorem}

\subsection{Shadowing in metric spaces}

In this subsection, we consider a dynamical system $(X,G,\Phi)$, where $X$ is compact metric systems.
For $\U\in\FOC(X)$ and $A\subset X$, the \emph{star} of $A$ in $\U$ is the set $st(A,\U)$ which is the union of all elements of $\U$ intersecting $A$. Denote by $\diam(A)$ the diameter of the set $A\subset X$. For a finite open cover $\U$, the diameter of $\U$ is $\diam(\U)=\max_{U\in \U}\diam(U)$, and the Lebesgue number of $\U$ is denoted by $\lambda(\U)$.

%\begin{theorem}
	%Let $(X,G,\Phi)$ be a dynamical system, where $X$ is compact metric systems and $S$ is a finite subset of $G$. If $\Phi$ has $S$-shadowing property, then there exists an inverse system $(\phi^{n+1}_n,(X_n,G,\sigma))$ consisting of shifts of finite type over $S\cup \{e_G\}$ such that $(X,G,\Phi)$ is a factor of $(\ilim\{\phi^{n+1}_n,X_n\},G,\sigma^*)$.
%\end{theorem}

\begin{proof}[Proof of Theorem \ref{t:1.2}]
	First, we construct a sequence $\{\U_n\}$ of finite open covers such that
	\begin{itemize}
		\item[(1)] every $(S,\U_{n+1})$-pseudo-orbit can be $\U_n$-shadowed by some point,
		\item[(2)] $\{\U_n\}$ is a cofinal subset of $\FOC(X)$, and
		\item[(3)] for any $U\in\U_{n+2}$, there exists $W\in\U_n$ such that $st(U,\U_{n+1})\subset W$.
	\end{itemize}
Let $\U_0=\{X\}$.
Define $\U_{n+1}$ inductively by choosing a finite open cover $\U_{n+1}\succ\U_n$ which witness the $S$-shadowing property for $\U_n$, and $\diam(\U_{n+1})<\frac13\lambda(\U_n)$.
Since the definition of $\{\U_n\}$ and $\diam(\U_n)\To0$, $\{\U_n\}$ satisfies the condition (1) and (2).
Fix $n\in\N$ and $U\in\U_{n+2}$.
There exists $V\in\U_{n+1}$ such that $U\subset V$, which implies that $st(U,\U_{n+1})\subset st(V,\U_{n+1})$. Noticing that the diameter of $\diam(st(V,\U_{n+1}))\le 3\diam(\U_{n+1})<\lambda(\U_n)$, we can find $W\in\U_n$ such that
$W\supset st(V,\U_{n+1})\supset st(U,\U_{n+1})$. Thus $\{\U_n\}$ satisfies the condition (3).

For each $U\in\U_{n+2}$, fix $W(U)\in \U_n$ such that $W(U)\supset st(U,\U_{n+1})$, and define $\omega:\PO(\U_{n+2})\To\U^G$ by $\omega(\{U_g\}_{g\in G})=\{W(U_g)\}_{g\in G}$. It is clear that $\omega$ is well defined and continuous, and commutes with $\sigma$.

In fact, $\omega(\PO(\U_{n+2}))\subset \Or(\U_n)$. To see this, for any $\{U_g\}_{g\in G}\in\PO(\U_{n+2})$, there exists $(S,\U_{n+2})$-pseudo-orbit $\{x_g\}_{g\in G}$ with pattern $\{U_g\}_{g\in G}$. By the condition (1) of $\{\U_n\}$, there exists $z\in X$ which $\U_{n+1}$-shadows $\{x_g\}_{g\in G}$. Thus $\Phi_g(z),x_g\in st(U_g,\U_{n+1})\subset W(U_g)$ for any $g\in G$, which implies that $\{W(U_g)\}_{g\in G}\in\Or(\U_n)$.

Now, we define $\iota':\PO(\U_{n+2})\To\PO(\U_n)$ by $\iota'=j\circ\omega$, where $j$ is the inclusion map from $\Or(\U_n)$ to $\PO(\U_n)$. Also we define $\iota'':\Or(\U_{n+2})\To\Or(\U_n)$ by $\iota''=\omega\circ j'$, where $j'$ is the inclusion map from $\Or(\U_{n+2})$ to $\PO(\U_{n+2})$.
Thus $(\iota',(\PO(\U_{2i}),G,\sigma))$ and $(\iota'',(\Or(\U_{2i}),G,\sigma))$ form two inverse systems.
%So it can be seen that the map $\omega^*$ induced by $\omega$, that is, $\omega^*:\ilim\{\iota',\PO(\U_{2i})\}\To\ilim\{\iota',\Or(\U_{2i})\}$, commutes with $\sigma^*$.
So the map $\omega$ induces a map $\omega^*$ from $\ilim\{\iota',\PO(\U_{2i})\}$ to $\ilim\{\iota'',\Or(\U_{2i})\}$ by $\omega^*(\{u_i\})=\{\omega(u_{i+1})\}$, which commutes with $\sigma^*$.
Noticing that $\Or(\U_{2(i+1)})\subset \PO(\U_{2(i+1)})$, we have for any $\{u_i\}\in\ilim\{\iota'',\Or(\U_{2i})\}$, $\omega(u_{i+1})=\omega\circ j'(u_{i+1})=\iota''(u_{i+1})=u_i$, which implies $\omega(\{u_i\})=\{u_i\}$ for any $\{u_i\}\in\ilim\{\iota'',\Or(\U_{2i})\}$.
Thus $\omega^*$ is surjective.

Finally, it ends proof by showing that $(X,G,\Phi)$ is a factor of $(\ilim\{\iota'',\Or(\U_{2i})\},G,\sigma)$.
Define $\psi:\ilim\{\iota'',\Or(\U_{2i})\}\To X$ by
$$\psi(\{u_i\})\in\bigcap_{i\in\N}\overline{\pi_{e_G}(u_i)},$$
and notice that $\pi_0(u_{i+1})\subset\pi_0(u_i)$ and $\diam(\U_{2i})\To 0$, which means that $\psi$ is well defined.
It is not hard to see that $\psi$ is continuous, and commutes with $\sigma^*$ from similar reasoning as Theorem \ref{t:3.7} and Corollary \ref{c:4.2}. To see $\psi$ is surjective, fix any $x\in X$. For any $i\in\N$, let $O_{2i}(x)\subset\Or(\U_{2i})$ be the set of all $\U_{2i}$-orbit pattern for $x$, and let $$O(x)=\bigcap_{i\in\N}\pi^{-1}_{2i}\left(\overline{O_{2i}(x)}\right)\cap\ilim\{\iota'',\Or(\U_{2i})\}.$$
So $O(x)$ is nonempty and $\psi(O(x))=\{x\}$, which implies that $\psi$ is surjective. Therefore, $(X,G,\Phi)$ is a factor of $(\ilim\{\iota',\PO(\U_{2i})\},G,\sigma)$ by a factor map $\psi\circ\omega^*$.
\end{proof}

\section{Shadowing of factor}

In this section, inspired by \cite{GM20}, we give a class of factor maps which preserve shadowing property.

\begin{definition}
	Let $(X,G,\Phi^X)$ and $(Y,G,\Phi^Y)$ be dynamical systems, where $S$ is a finite subset of $G$. We call the factor map $\phi:X\To Y$ \emph{lifts pseudo-orbit for $S$} if for any $\U_X\in\FOC(X)$, there exists $\U_Y\in\FOC(Y)$ such that for any $(S,\U_Y)$-pseudo-orbit $\{y_g\}_{g\in G}$, there exists a $(S,\U_X)$-pseudo-orbit $\{x_g\}_{g\in G}$ such that $\{y_g\}_{g\in G}=\{\phi(x_g)\}_{g\in G}$.
\end{definition}

\begin{theorem}
	Let $(X,G,\Phi^X)$ and $(Y,G,\Phi^Y)$  be dynamical systems, and $S$ be a finite subset of $G$. If $\Phi^X$ has $S$-shadowing property and $\phi:X\To Y$ is a factor map which lifts pseudo-orbit for $S$, then $\Phi^Y$ has $S$-shadowing property.
\end{theorem}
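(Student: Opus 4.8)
The plan is a straightforward diagram chase through the definitions: pull the target cover on $Y$ back to $X$, invoke the $S$-shadowing property of $\Phi^X$ there, lift a pseudo-orbit using the hypothesis on $\phi$, and finally push the shadowing point forward through $\phi$.

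First I would fix an arbitrary $\U_Y\in\FOC(Y)$; the goal is to produce $\V_Y\in\FOC(Y)$ such that every $(S,\V_Y)$-pseudo-orbit is $\U_Y$-shadowed by some point of $Y$. Set $\U_X=\{\phi^{-1}(U):U\in\U_Y\}$. Since $\phi$ is a continuous surjection, $\U_X$ is again a finite open cover with no empty member, so $\U_X\in\FOC(X)$. By the $S$-shadowing property of $\Phi^X$, choose $\V_X\in\FOC(X)$ such that every $(S,\V_X)$-pseudo-orbit of $(X,G,\Phi^X)$ can be $\U_X$-shadowed by a point of $X$. Then I would apply the hypothesis that $\phi$ lifts pseudo-orbit for $S$, \emph{with $\V_X$ playing the role of $\U_X$} in that definition (this is the one place where the bookkeeping matters: the cover handed to the lifting property must be $\V_X$, not the original $\U_X$). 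This yields $\V_Y\in\FOC(Y)$ such that every $(S,\V_Y)$-pseudo-orbit $\{y_g\}_{g\in G}$ in $Y$ equals $\{\phi(x_g)\}_{g\in G}$ for some $(S,\V_X)$-pseudo-orbit $\{x_g\}_{g\in G}$ in $X$.

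Finally I would check that this $\V_Y$ works. Given $\{y_g\}_{g\in G}$ and a lift $\{x_g\}_{g\in G}$ as above, the choice of $\V_X$ gives $z\in X$ that $\U_X$-shadows $\{x_g\}_{g\in G}$: for each $g\in G$ there is $U_g\in\U_Y$ with $\Phi^X_g(z),x_g\in\phi^{-1}(U_g)$. Applying $\phi$ and using $\phi\circ\Phi^X_g=\Phi^Y_g\circ\phi$, one gets $\Phi^Y_g(\phi(z))=\phi(\Phi^X_g(z))\in U_g$ and $y_g=\phi(x_g)\in U_g$, so $\phi(z)$ $\U_Y$-shadows $\{y_g\}_{g\in G}$. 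Hence $\Phi^Y$ has $S$-shadowing property. There is no serious obstacle; the argument is purely formal once the definitions are unwound, and the only points requiring care are that $\phi^{-1}$ sends finite open covers of $Y$ to finite open covers of $X$ (which uses surjectivity to rule out empty members), and the order in which the constructions are applied — pull back, shadow, lift, shadow again, push forward — so that the lifting property is invoked with the correct cover $\V_X$.
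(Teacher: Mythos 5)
Your proposal is correct and follows essentially the same route as the paper's proof: pull back $\U_Y$, apply $S$-shadowing of $\Phi^X$ to get $\V_X$, invoke the lifting property with $\V_X$ to get $\V_Y$, and push the shadowing point forward via the equivariance of $\phi$. The only cosmetic difference is that you take $\U_X=\phi^{-1}\U_Y$ exactly while the paper takes any $\U_X\succ\phi^{-1}\U_Y$; both work.
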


\begin{proof}
	Fix any $\U_Y\in\FOC(Y)$, and take $\U_X\in\FOC(X)$ such that $\U_X\succ\phi^{-1}\U_Y$. By the $S$-shadowing property of $\Phi^X$, there exists $\V_X\in\FOC(X)$ such that every $(S,\V_X)$-pseudo-orbit can be $\U_X$-shadowed by some point in $X$.
	Since $\phi$ lifts pseudo-orbit for $S$, let $\V_Y$ witness the property.
	For any $(S,\V_Y)$-pseudo-orbit $\{y_g\}_{g\in G}$, there exists $(S,\V_X)$-pseudo-orbit $\{x_g\}_{g\in G}$ such that $\phi(x_g)=y_g$ for any $g\in G$. Thus, there exists $z\in X$ which $\U_X$-shadows $\{x_g\}_{g\in G}$, that is, for any $g\in G$, there exists $U_g\in\U_X$ such that $\Phi^X_g(z),x_g\in U_g$. Since $\U_X\succ\phi^{-1}\U_Y$, it is clear that $\phi(z)$ $\U_Y$-shadows $\{y_g\}_{g\in G}$.
\end{proof}

\begin{definition}
	Let $(X,G,\Phi^X)$ and $(Y,G,\Phi^Y)$ be dynamical systems, and $S$ be a finite subset of $G$.
	For two sequence $\{x_g\}_{g\in G}$, $\{x'_g\}_{g\in G}$ of $X$ and $\U\in\FOC(X)$, we call $\{x_g\}_{g\in G}$ \emph{$\U$-shadows} $\{x'_g\}_{g\in G}$ if for any $g\in G$, there exists $U_g\in\U$ such that $x_g,x'_g\in U_g$.
	
	We call the factor map $\phi:X\To Y$ \emph{almost lifts pseudo-orbit for $S$} if for any $\U_X\in\FOC(X)$ and $\W_Y\in\FOC(Y)$, there exists $\U_Y\in\FOC(Y)$ such that for any $(S,\U_Y)$-pseudo-orbit $\{y_g\}_{g\in G}$, there exists $(S,\U_X)$-pseudo-orbit $\{x_g\}_{g\in G}$ such that $\{\phi(x_g)\}_{g\in G}$ $\W_Y$-shadows $\{y_g\}_{g\in G}$.
\end{definition}

The following theorem shows that a factor map that almost lifts pseudo-orbit preserves shadowing property.

\begin{theorem}\label{t:5.4}
	Suppose that $(X,G,\Phi^X)$ and $(Y,G,\Phi^Y)$ are dynamical systems, and $S$ be a finite subset of $G$. Let $\phi:X\To Y$ be a factor map. We have
	\begin{itemize}
		\item[(1)] if $\Phi^X$ has $S$-shadowing property and $\phi$ almost lifts pseudo-orbit for $S$, then $\Phi^Y$ has $S$-shadowing property, and
		\item[(2)] if $\Phi^Y$ has $S$-shadowing property, then $\phi$ almost lifts pseudo-orbit for $S$.
	\end{itemize}
\end{theorem}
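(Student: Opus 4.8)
The plan is to prove the two implications separately, using the already-established characterization of $S$-shadowing in terms of finite open covers.

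For part (1), the strategy mirrors the proof for the honest (non-"almost") lifting property. Fix $\U_Y\in\FOC(Y)$. The key new idea is that, because $\phi$ only \emph{almost} lifts pseudo-orbits, the shadowing point we produce downstairs will only shadow $\{y_g\}$ up to an auxiliary cover, so we must build in a refinement to absorb that slack. Concretely, first choose $\W_Y\in\FOC(Y)$ that "star-refines" $\U_Y$ in the sense that for any $y$, the star of a $\W_Y$-set about $y$ lies in a single $\U_Y$-set; equivalently, pick $\W_Y$ so that if $z\ \W_Y$-shadows $\{y'_g\}$ and $\{y'_g\}\ \W_Y$-shadows $\{y_g\}$, then $z\ \U_Y$-shadows $\{y_g\}$. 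Next pick $\U_X\in\FOC(X)$ with $\U_X\succ\phi^{-1}\W_Y$. By $S$-shadowing of $\Phi^X$, get $\V_X\in\FOC(X)$ witnessing it for $\U_X$. Feed $\U_X:=\V_X$ (sic: the cover to be lifted into) and $\W_Y$ into the "almost lifts" hypothesis to obtain $\U_Y':=\V_Y$. Now given any $(S,\V_Y)$-pseudo-orbit $\{y_g\}$, there is an $(S,\V_X)$-pseudo-orbit $\{x_g\}$ with $\{\phi(x_g)\}\ \W_Y$-shadowing $\{y_g\}$; shadow $\{x_g\}$ by some $z\in X$ in $\U_X$; then $\phi(z)\ \W_Y$-shadows $\{\phi(x_g)\}$ (using $\U_X\succ\phi^{-1}\W_Y$), and composing the two $\W_Y$-shadowings via the choice of $\W_Y$ gives that $\phi(z)\ \U_Y$-shadows $\{y_g\}$. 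Hence $\Phi^Y$ has $S$-shadowing property.

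For part (2), assume $\Phi^Y$ has $S$-shadowing property and let $\U_X\in\FOC(X)$, $\W_Y\in\FOC(Y)$ be given. Apply $S$-shadowing of $\Phi^Y$ to $\W_Y$ to obtain $\V_Y\in\FOC(Y)$ such that every $(S,\V_Y)$-pseudo-orbit is $\W_Y$-shadowed by a point of $Y$. I claim $\U_Y:=\V_Y$ works. Take any $(S,\V_Y)$-pseudo-orbit $\{y_g\}$; it is $\W_Y$-shadowed by some $z\in Y$. Since $\phi$ is a surjection, pick $x\in X$ with $\phi(x)=z$ and set $x_g:=\Phi^X_g(x)$, a genuine orbit in $X$, hence trivially an $(S,\U_X)$-pseudo-orbit (for any cover). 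Then $\phi(x_g)=\Phi^Y_g(z)$, and $\{\Phi^Y_g(z)\}=\{\phi(x_g)\}$ $\W_Y$-shadows $\{y_g\}$ by the choice of $z$. This is exactly the conclusion that $\phi$ almost lifts pseudo-orbit for $S$.

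The main obstacle is the bookkeeping in part (1): getting the order of the cover choices right so that the slack from "almost" lifting is controlled. The crux is the choice of the intermediate cover $\W_Y$ refining $\U_Y$ strongly enough that a $\W_Y$-shadowing composed with a $\W_Y$-shadowing yields a $\U_Y$-shadowing — this needs something like a star-refinement, which is available since $Y$ is a compact Hausdorff (in particular normal) space, so every finite open cover has a finite open star-refinement. Part (2) is essentially immediate from surjectivity of $\phi$ and the fact that true orbits are pseudo-orbits for every cover, so no genuine difficulty arises there.
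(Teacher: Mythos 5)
Your proof is correct and follows essentially the same route as the paper's: a barycentric-type refinement $\W_Y$ of $\U_Y$ (any two intersecting members of $\W_Y$ lie in a common member of $\U_Y$) to absorb the slack from the almost-lifting, a cover $\U_X\succ\phi^{-1}(\W_Y)$ upstairs, and for part (2) the observation that a true orbit through any preimage of the $\W_Y$-shadowing point is an $(S,\U_X)$-pseudo-orbit for every cover. In fact your part (1) is slightly more careful than the paper's own write-up: the paper feeds $\U_X$ itself into the almost-lifting hypothesis and then asserts that the resulting $(S,\U_X)$-pseudo-orbit is $\U_X$-shadowed by $S$-shadowing, whereas the shadowing property only applies to pseudo-orbits with respect to the witnessing cover, so one should --- as you do --- first take $\V_X$ witnessing $S$-shadowing for $\U_X$ and lift into $\V_X$.
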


\begin{proof}
	First, suppose that $\Phi^X$ has $S$-shadowing property and $\phi$ almost lifts pseudo-orbit for $S$.
	Fix any $\U_Y\in\FOC(Y)$.
	Then there exists $\W_Y$ such that for any two $W,W'\in\W_Y$ with $W\cap W'\nempty$, there exists $U\in\U_Y$ with $W\cup W'\subset U$.
	Take some $\U_X\in\FOC(X)$ with $\U_X\succ\phi^{-1}(\W_Y)$.
	For $\U_X$ and $\W_Y$, since $\phi$ almost lifts pseudo-orbit $S$, there exists $\V_Y$ witnessing the property.
	
	For any $(S,\V_Y)$-pseudo-orbit $\{y_g\}_{g\in G}$, there exists $(S,\U_X)$-pseudo-orbit $\{x_g\}_{g\in G}$ such that $\{\phi(x_g)\}_{g\in G}$ $\W_Y$-shadows $\{y_g\}_{g\in G}$, that is, for any $g\in G$ there exists $W\in\W_Y$ with $\phi(x_g),y_g\in W$.
	Since $\Phi^X$ has $S$-shadowing property, there is $z\in X$ which $\U_X$-shadows $\{x_g\}_{g\in G}$.
	So $\phi(z)$ $\W_Y$-shadows $\{\phi(x_g)\}_{g\in G}$ by noticing $\U_X\succ\phi^{-1}(\W_Y)$. So for any $g\in G$ there exists $W'\in\W_Y$ with $\Phi^Y_g(\phi(z)),\phi(x_g)\in W'$. For any $g\in G$, since $\phi(x_g)\in W\cap W'$, there exists $U_g\in U_Y$ such that $\Phi^Y_g(\phi(z)),\phi(x_g),y_g\in W\cup W'\subset U_g$.
	Thus $\phi(z)$ $\U_Y$-shadows $\{y_g\}_{g\in G}$.
	
	To prove $(2)$, fix any $\U_X\in\FOC(X)$ and $\W_Y\in\FOC(Y)$.
	Since $\Phi^Y$ has $S$-shadowing property, let $\V_Y$ witness the property with respect ot $\W_Y$.
	For any $(S,\V_Y)$-pseudo-orbit $\{y_g\}_{g\in G}$, there exists $z\in Y$ which $\W_Y$-shadows $\{y_g\}_{g\in G}$. Take some $x\in\phi^{-1}(z)$. So $\{\Phi^X_g(x)\}_{g\in G}$ is $(S,\U_X)$-pseudo-orbit, and $\{\phi(\Phi^X_g(x))\}_{g\in G}=\{\Phi^Y_g(z)\}$ $\W_Y$-shadows $\{y_g\}_{g\in G}$. Thus $\phi$ almost lifts pseudo-orbit for $S$.
\end{proof}

For a metric space, we have the following result.

\begin{lemma}\label{l:5.5}
	Suppose that $(X,G,\Phi^X)$ and $(Y,G,\Phi^Y)$ are dynamical systems, where $X,Y$ are compact metric spaces and $S$ is a finite subset of $G$. Let $\phi:X\To Y$ be a factor map. Then $\phi$ almost lifts pseudo-orbit for $S$ if and only if for any $\eps,\eta>0$, there exists $\delta>0$ such that for any $(S,\delta)$-pseudo-orbit $\{y_g\}_{g\in G}$ of $Y$, there exists $(S,\eta)$-pseudo-orbit $\{x_g\}_{g\in G}$ of $X$ such that $d_Y(\phi(x_g),y_g)<\eps$ for any $g\in G$.
\end{lemma}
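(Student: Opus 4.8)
The plan is to translate each of the two notions into the other by replacing finite open covers with finite covers by small balls and vice versa, using Lebesgue numbers — exactly the device used for the cover-versus-metric comparison of the $S$-shadowing property proved in Section~3. Throughout I use that in a compact metric space every finite open cover $\U$ has a positive Lebesgue number $\lambda(\U)$, and that $\{B(z,r):z\in X\}$ admits a finite subcover of diameter $<2r$.

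For the forward direction, assume $\phi$ almost lifts pseudo-orbit for $S$ and fix $\eps,\eta>0$. Take $\U_X\in\FOC(X)$ a finite subcover of $\{B(x,\eta/2):x\in X\}$ and $\W_Y\in\FOC(Y)$ a finite subcover of $\{B(y,\eps/2):y\in Y\}$; then every $(S,\U_X)$-pseudo-orbit is an $(S,\eta)$-pseudo-orbit since $\diam(\U_X)<\eta$, and $\W_Y$-shadowing of $\{y_g\}$ by $\{\phi(x_g)\}$ forces $d_Y(\phi(x_g),y_g)<\eps$ since $\diam(\W_Y)<\eps$. Let $\U_Y$ be the cover produced by the "almost lifts" hypothesis for $\U_X,\W_Y$, and set $\delta=\lambda(\U_Y)$. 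Given an $(S,\delta)$-pseudo-orbit $\{y_g\}_{g\in G}$, choose for each $g$ some $U_g\in\U_Y$ with $B(y_g,\delta)\subset U_g$; then $y_{sg}\in U_{sg}$ and, because $d_Y(\Phi^Y_s(y_g),y_{sg})<\delta$, also $\Phi^Y_s(y_g)\in U_{sg}$ for every $s\in S$, so $\{y_g\}$ is an $(S,\U_Y)$-pseudo-orbit with pattern $\{U_g\}$. Applying the hypothesis gives an $(S,\U_X)$-pseudo-orbit $\{x_g\}$ with $\{\phi(x_g)\}$ $\W_Y$-shadowing $\{y_g\}$, which by the choices above is an $(S,\eta)$-pseudo-orbit with $d_Y(\phi(x_g),y_g)<\eps$.

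For the converse, assume the $\eps$-$\eta$-$\delta$ statement and fix $\U_X\in\FOC(X)$, $\W_Y\in\FOC(Y)$. Put $\eta=\lambda(\U_X)$, $\eps=\lambda(\W_Y)$, let $\delta>0$ be as provided, and take $\U_Y$ a finite subcover of $\{B(y,\delta/2):y\in Y\}$. Any $(S,\U_Y)$-pseudo-orbit $\{y_g\}$ is an $(S,\delta)$-pseudo-orbit because $\diam(\U_Y)<\delta$, so the hypothesis yields an $(S,\eta)$-pseudo-orbit $\{x_g\}$ of $X$ with $d_Y(\phi(x_g),y_g)<\eps$ for all $g$. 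Choosing for each $g$ an element $U_g\in\U_X$ with $B(x_g,\eta)\subset U_g$ gives $x_{sg},\Phi^X_s(x_g)\in U_{sg}$ for all $s\in S$, so $\{x_g\}$ is an $(S,\U_X)$-pseudo-orbit with pattern $\{U_g\}$; and since $d_Y(\phi(x_g),y_g)<\eps=\lambda(\W_Y)$, the two-point set $\{\phi(x_g),y_g\}$ lies in a single member of $\W_Y$, i.e.\ $\{\phi(x_g)\}$ $\W_Y$-shadows $\{y_g\}$.

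The only point that needs genuine care — and the reason the equivalence is not completely trivial — is the consistency of the pseudo-orbit \emph{patterns}: being an $(S,\U)$-pseudo-orbit requires one function $g\mapsto U_g$ that witnesses the condition at \emph{every} $g$ simultaneously (cf.\ Example~\ref{ex}), not merely a choice $U_{s,g}$ for each pair. In both directions this is handled automatically because $U_g$ is selected from the single point $x_g$ (resp.\ $y_g$) via a ball of Lebesgue radius, so no incompatibility between different factorizations $g=sg'$ can occur; what remains to check, namely $\Phi_s(x_g)\in U_{sg}$, is precisely what the Lebesgue-number choice guarantees. Everything else is routine bookkeeping with diameters and Lebesgue numbers.
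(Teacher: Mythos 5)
Your proof is correct and follows essentially the same route as the paper's: in each direction you convert between covers and metric data by taking a cover of small diameter on one side and a ball of Lebesgue radius centred at the given point on the other, which simultaneously fixes a single pattern $g\mapsto U_g$ and verifies the pseudo-orbit condition. The closing remark about pattern consistency is a point the paper leaves implicit, but the underlying argument is identical.
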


\begin{proof}
	First, suppose that $\phi$ almost lifts pseudo-orbit for $S$.
	Fix any $\eps,\eta>0$.
	Take $\U_X\in\FOC(X)$ with $\diam(\U_X)<\eta$ and $\W_Y\in\FOC(Y)$ with $\diam(\W_Y)<\eps$.
	Since $\phi$ almost lifts pseudo-orbit for $S$, there exists $\U_Y$ witnessing the property with respect to $\U_X,\W_Y$.
	Take $\delta=\lambda(\U_Y)$.
	Fix any $(S,\delta)$-pseudo-orbit $\{y_g\}_{g\in G}$ of $Y$.
	For each $g\in G$, choose $U_g\in\U_Y$ such that $B(y_g,\delta)\subset U_g$.
	Then for any $g\in G$ and $s\in S$, $d_X(y_{sg},\Phi^X_s(y_g))<\delta$, which implies that $y_{sg},\Phi^X_s(y_g)\in U_{sg}$.
	So $\{y_g\}_{g\in G}$ is $(S,\U_Y)$-pseudo-orbit with pattern $\{U_g\}_{g\in G}$.
	Then there exists $(S,\U_X)$-pseudo-orbit $\{x_g\}_{g\in G}$ such that $\{\phi(x_g)\}_{g\in G}$ $\W_Y$-shadows $\{y_g\}_{g\in G}$. Since $\diam(\U_X)<\eta$ and $\diam(\W_Y)<\eps$, $\{x_g\}_{g\in G}$ is $(S,\eta)$-pseudo-orbit and $d_Y(\phi(x_g),y_g)<\eps$ for any $g\in G$.
	
	For the converse, fix any $\U_X\in\FOC(X)$ and $\W_Y\in\FOC(Y)$.
	Take $\eps=\lambda(\W_Y)$ and $\eta=\lambda(\U_X)$, and let $\delta$ witness the condition with respect to $\eps,\eta$.
	Choose $\V_Y\in\FOC(X)$ with $\diam(\V_Y)<\delta$.
	For any $(S,\V_Y)$-pseudo-orbit $\{y_g\}_{g\in G}$, it is also $(S,\delta)$-pseudo-orbit of $Y$.
	Thus there exists $(S,\eta)$-pseudo-orbit $\{x_g\}_{g\in G}$ of $X$ such that $d_Y(\phi(x_g),y_g)<\eps$ for any $g\in G$.
	For each $g\in G$, fix $U_g\in\U_X$ such that $B(x_g,\eta)\subset U_g$. Thus for any $g\in G$ and $s\in S$, $d_X(y_{sg},\Phi^X_s(y_g))<\eta$, which implies that $y_{sg},\Phi^X_s(y_g)\in U_{sg}$. So $\{x_g\}_{g\in G}$ is $(S,\U_X)$-pseudo-orbit with pattern $\{U_g\}_{g\in G}$. Since $d_Y(\phi(x_g),y_g)<\eps$ for any $g\in G$, it is clear that $\{\phi(x_g)\}_{g\in G}$ $\W_Y$-shadows $\{y_g\}_{g\in G}$.
\end{proof}

By Theorem \ref{t:3.4} and Theorem \ref{t:5.4}, we have the following corollary.

\begin{corollary}
	Let $(X,G,\Phi)$ be a dynamical system, where $X$ is a compact metric space, and $S$ be a finite subset of $G$. If $(X,G,\Phi)$ is a factor of an ML inverse limit of a sequence of subshifts which have $S$-shadowing property by a map which almost lifts pseudo-orbit for $S$, then $\Phi$ has $S$-shadowing property.
\end{corollary}

\section{Some dynamical properties preserving by inverse limit}

In this section, we investigated other dynamical properties $P$ satisfying that systems with property $P$ can be represented by subshifts with the same property.

Let $(X,G,\Phi)$ be a dynamical system, and for any two nonempty sets $U,V\subset X$, denote the \emph{hitting time set} from $U$ to $V$ by
$$N(U,V)=\{g\in G\setminus\{e_G\}:U\cap \Phi_{g^{-1}}(V)\nempty\},$$
and $N(x,U):=N(\{x\},U)$ for any $x\in X$.

Recall that $(X,G,\Phi)$ is \emph{transitive} if for any two nonempty open sets $U,V\subset X$, $N(U,V)$ is infinite.
The dynamical system $(X,G,\Phi)$ is called \emph{totally transitive} if for any countable subset $K\subset G$ and two nonempty open sets $U,V\subset X$, $N(U,V)\cap G'\nempty$ where $G'$ is the group generated by $K$.
The dynamical system $(X,G,\Phi)$ is called \emph{minimal} if it does not contain any proper subsystem. It is proved that dynamical system $(X,G,\Phi)$ is minimal if for any $x\in X$ and any nonempty open set $U\subset X$, $N(x,U)$ is syndetic, that is, there exists finite subset $F\subset G$, $FN(x,V)=G$.
The dynamical system $(X,G,\Phi)$ is called \emph{weakly mixing} if $(X\times X,G,\Phi\times \Phi)$ is transitive.\footnote{Naturally, $\Phi\times\Phi$ is defined as $(\Phi\times\Phi)_g=\Phi_g\times \Phi_g$.}
%The dynamical system $(X,G,\Phi)$ is called \emph{mixing} if for any two nonempty open sets $U,V\subset X$, there exists $g\in G$ such that $\{g^n:n\in\Z\}\setminus N(U,V)$ is finite.
The dynamical system $(X,G,\Phi)$ is called \emph{mixing} if for any two nonempty open sets $U,V\subset X$, $G\setminus N(U,V)$ is finite.

The following lemmas show the connection of hitting time set between the inverse limit and the inverse system.

\begin{lemma}\label{l:6.1}
	Let $(\Lambda,\le)$ be a directed set. Suppose that  $(X,G,\Phi^*)$ be an inverse limit of an ML inverse system $(\phi^\eta_\lambda,(X_\lambda,G,\Phi^\lambda))$. Then for any two nonempty open sets $U,V$ of $X$, there exists $\gamma\in\Lambda$ such that for any
	$\eta\ge\gamma$, there exist two nonempty open sets $U_\eta,V_\eta\subset X_\eta$ such that $N(U_\eta,V_\eta)\subset N(U,V)$.
\end{lemma}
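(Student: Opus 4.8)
The statement asserts that hitting time sets in the inverse limit can be ``traced down'' to some stage of the inverse system. The natural approach is to use the basis description of the inverse limit topology together with the ML condition. First I would recall that, since $(X,G,\Phi^*)$ is the inverse limit of $(\phi^\eta_\lambda,(X_\lambda,G,\Phi^\lambda))$, the sets $\pi_\lambda^{-1}(O_\lambda)\cap X$ (with $\lambda\in\Lambda$ and $O_\lambda$ open in $X_\lambda$) form a basis. Hence, given nonempty open $U,V\subset X$, I can pick $\lambda_1,\lambda_2\in\Lambda$ and nonempty open sets $O_{\lambda_1}\subset X_{\lambda_1}$, $O_{\lambda_2}\subset X_{\lambda_2}$ with $\emptyset\neq \pi_{\lambda_1}^{-1}(O_{\lambda_1})\cap X\subset U$ and $\emptyset\neq \pi_{\lambda_2}^{-1}(O_{\lambda_2})\cap X\subset V$. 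Since $\Lambda$ is directed, choose $\mu\ge\lambda_1,\lambda_2$ and set $U_\mu=(\phi^\mu_{\lambda_1})^{-1}(O_{\lambda_1})$, $V_\mu=(\phi^\mu_{\lambda_2})^{-1}(O_{\lambda_2})$; then $\pi_\mu^{-1}(U_\mu)\cap X\subset U$ and $\pi_\mu^{-1}(V_\mu)\cap X\subset V$, and both of these sets are nonempty.

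\textbf{Using the ML condition.} The key point is that $U_\mu,V_\mu$ might fail to be nonempty as subsets seen by higher stages, or rather, that a witness $g\in N(U_\eta,V_\eta)$ at stage $\eta$ should yield a genuine point of $X$ realizing $g\in N(U,V)$. By the ML condition, pick $\gamma\ge\mu$ witnessing it with respect to $\mu$, so that for every $\eta\ge\gamma$ one has $\phi^\eta_\mu(X_\eta)=\phi^\gamma_\mu(X_\gamma)=:Y_\mu$, and moreover $\pi_\mu^{-1}(y)\cap X\neq\emptyset$ for every $y\in Y_\mu$. Since $\pi_\mu^{-1}(U_\mu)\cap X\neq\emptyset$, we get $U_\mu\cap Y_\mu\neq\emptyset$, and similarly $V_\mu\cap Y_\mu\neq\emptyset$; replacing $U_\mu,V_\mu$ by $U_\mu\cap(\phi^\eta_\mu)^{-1}$... more precisely, for $\eta\ge\gamma$ define $U_\eta=(\phi^\eta_\mu)^{-1}(U_\mu)$ and $V_\eta=(\phi^\eta_\mu)^{-1}(V_\mu)$; these are nonempty since $\phi^\eta_\mu(X_\eta)=Y_\mu$ meets both $U_\mu$ and $V_\mu$, and they are open by continuity of $\phi^\eta_\mu$.

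\textbf{Transferring hitting times.} Now suppose $g\in N(U_\eta,V_\eta)$ for some $\eta\ge\gamma$, i.e.\ there is $x_\eta\in U_\eta$ with $\Phi^\eta_g(x_\eta)\in V_\eta$. Set $y=\phi^\eta_\mu(x_\eta)\in Y_\mu$; by the commutation relation $\Phi^\mu_g\circ\phi^\eta_\mu=\phi^\eta_\mu\circ\Phi^\eta_g$ we get $y\in U_\mu$ and $\Phi^\mu_g(y)\in V_\mu$, and $y\in Y_\mu$ so there is $z\in X$ with $\pi_\mu(z)=y$. Then $z\in\pi_\mu^{-1}(U_\mu)\cap X\subset U$ and, using $\pi_\mu\circ\Phi^*_g=\Phi^\mu_g\circ\pi_\mu$, we obtain $\Phi^*_g(z)\in\pi_\mu^{-1}(V_\mu)\cap X\subset V$, hence $g\in N(U,V)$. (Note $g\neq e_G$ is inherited automatically.) Thus $N(U_\eta,V_\eta)\subset N(U,V)$ for all $\eta\ge\gamma$, as required.

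\textbf{Main obstacle.} The only subtle point is making sure that the ``down-pulled'' sets $U_\eta,V_\eta$ stay nonempty for \emph{all} $\eta\ge\gamma$ rather than just for $\eta=\gamma$; this is exactly where the ML condition is needed, since it guarantees the image $\phi^\eta_\mu(X_\eta)$ stabilizes and that every point of this stable image lifts to $X$. Everything else is bookkeeping with the commutation relations $(3)$ of the inverse system and the defining property of $\pi_\mu$. I would present the argument in the order above: basis reduction, directedness to a common index, ML stabilization and lifting, then the straightforward transfer of a single hitting witness.
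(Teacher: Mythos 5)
Your proposal is correct and follows essentially the same route as the paper: reduce to basic open sets of the form $\pi_\lambda^{-1}(O_\lambda)\cap X$, pull back along the bonding maps to all stages $\eta\ge\gamma$, and use the ML condition to lift a hitting witness from the stable image $\phi^\eta_\mu(X_\eta)=\phi^\gamma_\mu(X_\gamma)$ back to a point of $X$. Your handling of the two basis indices $\lambda_1,\lambda_2$ via directedness is slightly more explicit than the paper's, but the argument is the same.
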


\begin{proof}
	For any two nonempty open sets $U,V$ of $X$, there exist $\lambda\in\Lambda$ and two nonempty open sets $U_\lambda,V_\lambda$ such that $\emptyset\neq\pi_\lambda^{-1}(U_\lambda)\cap X\subset U$ and $\emptyset\neq\pi_\lambda^{-1}(V_\lambda)\cap X\subset V$.
	By the ML condition, there exists $\gamma\ge\lambda$ such that for any $\eta\ge\gamma$, $\phi^\eta_\lambda(X_\eta)=\phi^\gamma_\lambda(X_\gamma)$.
	Let $U_\eta=(\phi^\eta_\lambda)^{-1}(U_\lambda)$ and  $V_\eta=(\phi^\eta_\lambda)^{-1}(V_\lambda)$.
	Since $\pi_\lambda^{-1}(U_\lambda)\cap X\nempty$, $U_\eta$ is not empty set, and it is also same for $V_\eta$.
	
	To end the proof, it remains to prove $N(U_\eta,V_\eta)\subset N(U,V)$.
	Fix any $g\in N(U_\eta,V_\eta)$.
	Then there exists $x_\eta\in U_\eta$ such that $\Phi^\eta_g(x_\eta)\in V_\eta$.
	Since $\phi^\eta_\lambda(x_\eta)\in\phi^\eta_\lambda(X_\eta)$, there exists $x\in X$ such that $\pi_\lambda(x)=\phi^\eta_\lambda(x_\eta)$.
	It is clear that $x\in\pi_\lambda^{-1}(U_\lambda)\cap X\subset U$.
	And $\pi_\lambda(\Phi^*_g(x))=\Phi^\lambda_g(\phi^\eta_\lambda(x_\eta))=\phi^\eta_\lambda(\Phi^\eta_g(x_\eta))\in V_\lambda$, which implies that $\Phi^*_g(x)\in V$.
	It implies that $N(U_\eta,V_\eta)\subset N(U,V)$.
\end{proof}

\begin{lemma}\label{l:6.2}
	Let $(\Lambda,\le)$ be a directed set. Suppose that  $(X,G,\Phi^*)$ be an inverse limit of an ML inverse system $(\phi^\eta_\lambda,(X_\lambda,G,\Phi^\lambda))$. Then for any $x\in X$ and any nonempty open set $U$ of $X$, there exist $\lambda\in\Lambda$, and nonempty open set $U_\lambda\subset X_\lambda$ such that $N(\pi_\lambda(x),U_\lambda)\subset N(x,U)$.
\end{lemma}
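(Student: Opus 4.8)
The plan is to mimic the first part of the proof of Lemma \ref{l:6.1}, adapting it to a hitting time set of the form $N(x,U)$ rather than $N(U,V)$. First I would use the description of the basic open sets of the inverse limit: since $\{\pi_\lambda^{-1}(U_\lambda)\cap X : \lambda\in\Lambda,\ U_\lambda\text{ open in }X_\lambda\}$ forms a basis for $X=\ilim\{\phi^\eta_\lambda,X_\lambda\}$, the given nonempty open set $U$ contains a nonempty basic open set; that is, there exist $\lambda\in\Lambda$ and a nonempty open $U_\lambda\subset X_\lambda$ with $\emptyset\neq\pi_\lambda^{-1}(U_\lambda)\cap X\subset U$. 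Unlike in Lemma \ref{l:6.1}, here there is no need to invoke the ML condition to push $U_\lambda$ up to a higher index, because the point $x$ is already given in $X$ and its coordinate $\pi_\lambda(x)$ automatically lies in $X_\lambda$ — the surjectivity issue that forced the ML argument in the previous lemma simply does not arise.

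Next I would verify the containment $N(\pi_\lambda(x),U_\lambda)\subset N(x,U)$. Fix $g\in N(\pi_\lambda(x),U_\lambda)$, so $g\neq e_G$ and $\Phi^\lambda_g(\pi_\lambda(x))\in U_\lambda$. By the compatibility condition (3) in the definition of the inverse system of dynamical systems, $\pi_\lambda$ intertwines the actions, i.e.\ $\pi_\lambda\circ\Phi^*_g=\Phi^\lambda_g\circ\pi_\lambda$, so $\pi_\lambda(\Phi^*_g(x))=\Phi^\lambda_g(\pi_\lambda(x))\in U_\lambda$. Hence $\Phi^*_g(x)\in\pi_\lambda^{-1}(U_\lambda)\cap X\subset U$, which shows $g\in N(x,U)$. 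This completes the argument.

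I do not expect any serious obstacle here; the lemma is strictly easier than Lemma \ref{l:6.1}, and the only things to be careful about are keeping the $e_G$-exclusion consistent with the definition of $N(\cdot,\cdot)$ and correctly citing the intertwining property of the projection maps $\pi_\lambda$. The one-sided (rather than two-sided) nature of the statement is precisely what removes the need for the ML hypothesis in the proof, so the directedness of $\Lambda$ is not used either, beyond guaranteeing that the inverse limit and its standard basis behave as expected.
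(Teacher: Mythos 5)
Your proposal is correct and follows essentially the same route as the paper's own proof: pick a basic open set $\pi_\lambda^{-1}(U_\lambda)\cap X\subset U$ and use the intertwining $\pi_\lambda\circ\Phi^*_g=\Phi^\lambda_g\circ\pi_\lambda$ to transfer hitting times. Your observation that the ML condition is not needed here is accurate and consistent with the paper, which likewise does not invoke it in this proof.
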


\begin{proof}
	Fix any $x\in X$ and any nonempty open set $U$ of $X$. Then there exist $\lambda\in\Lambda$ and two nonempty open set $U_\lambda\subset X_\lambda$ such that $\emptyset\neq\pi_\lambda^{-1}(U_\lambda)\cap X\subset U$.
	
	Fix any $h\in N(\pi_\lambda(x),U_\lambda)$. Then
	$$\pi_\lambda(\Phi^*_h(x))=\Phi^\lambda_h(\pi_\lambda(x))\in U_\lambda,$$
	which implies that $\Phi^*_h(x)\in\pi_\lambda^{-1}(U_\lambda)\cap X\subset U$ and ends the proof.
\end{proof}

\subsection{Proof of Theorem \ref{t:1.3}}
By Theorem \ref{t:3.7}, it is shown that a dynamical system can be represented by an inverse limit of its orbit space.
So we consider the connection of hitting time set between the dynamical system and its orbit space.

\begin{lemma}\label{l:6.3}
	Let $(X,G,\Phi)$ be a dynamical system with compact Hausdorff space $X$, $\U\in\FOC(X)$, and $A$ be a finite subset of $G$. Then for any $P_A,P'_A\in\mathcal{L}_A(\Or(\U))$, we have $N(U,V)\setminus (A^{-1}A)\subset N(C(P_A)\cap \Or(\U),C(P'_A)\cap \Or(\U))\subset N(U,V)$ where
	$$U=\bigcap_{g\in A}\Phi_{g^{-1}}(P_A(g)),V=\bigcap_{g\in A}\Phi_{g^{-1}}(P'_A(g)).$$
\end{lemma}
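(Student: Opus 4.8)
The plan is to prove the two inclusions separately, translating back and forth between a hitting time for the cylinder sets inside $\Or(\U)$ and a hitting time for the open sets $U,V\subset X$. I will use throughout the description $\sigma_g(\{U_h\}_{h\in G})=\{U_{hg}\}_{h\in G}$ of the shift, together with the elementary remark that ``$x\in U$'' means exactly ``$\Phi_h(x)\in P_A(h)$ for all $h\in A$'' (and similarly for $V$ and $P'_A$), and the remark that since $A\nempty$ we have $e_G\in A^{-1}A$, so anything outside $A^{-1}A$ is automatically $\neq e_G$.

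\emph{The inclusion $N(C(P_A)\cap\Or(\U),C(P'_A)\cap\Or(\U))\subset N(U,V)$.} Take $g$ in the left-hand set, so there is $u=\{U_h\}_{h\in G}\in C(P_A)\cap\Or(\U)$ with $\sigma_g(u)\in C(P'_A)$; unwinding the cylinders, $U_h=P_A(h)$ and $U_{hg}=P'_A(h)$ for every $h\in A$. Since $\Or(\U)$ is the closure of the set of genuine orbit patterns and these constraints involve only the coordinates in the finite set $A\cup Ag$, I can replace $u$ by a genuine orbit pattern agreeing with it on $A\cup Ag$, i.e. find $x\in X$ with $\Phi_k(x)\in U_k$ for all $k\in A\cup Ag$. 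Then $\Phi_h(x)\in P_A(h)$ and $\Phi_{hg}(x)=\Phi_h(\Phi_g(x))\in P'_A(h)$ for every $h\in A$; that is, $x\in U$ and $\Phi_g(x)\in V$, so $g\in N(U,V)$.

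\emph{The inclusion $N(U,V)\setminus(A^{-1}A)\subset N(C(P_A)\cap\Or(\U),C(P'_A)\cap\Or(\U))$.} Let $g\in N(U,V)$ with $g\notin A^{-1}A$, and fix $x\in U$ with $\Phi_g(x)\in V$. I build one orbit pattern $u=\{U_k\}_{k\in G}$ realised by $x$ that lies in $C(P_A)$ and whose $g$-shift lies in $C(P'_A)$: set $U_k:=P_A(k)$ for $k\in A$, $U_k:=P'_A(kg^{-1})$ for $k\in Ag$, and choose any member of $\U$ containing $\Phi_k(x)$ for the remaining $k$. The hypothesis enters exactly here: $A\cap Ag=\emptyset$ (otherwise $hg\in A$ for some $h\in A$, forcing $g\in A^{-1}A$), so the two prescriptions never conflict, and on $Ag$ the assignment is unambiguous because right translation by $g$ is a bijection. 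One then checks $\Phi_k(x)\in U_k$ everywhere: on $A$ this is $x\in U$, on $Ag$ it reads $\Phi_{hg}(x)=\Phi_h(\Phi_g(x))\in P'_A(h)$ and follows from $\Phi_g(x)\in V$, and off $A\cup Ag$ it holds by construction. Hence $x\in\bigcap_{k\in G}\Phi_{k^{-1}}(U_k)$, so $u\in\Or(\U)$; clearly $u\in C(P_A)$, while $\sigma_g(u)(h)=U_{hg}=P'_A(h)$ for $h\in A$ gives $\sigma_g(u)\in C(P'_A)$, and $\sigma_g(u)\in\Or(\U)$ by $G$-invariance of $\Or(\U)$ (Lemma \ref{l:3.6}). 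Thus $g\in N(C(P_A)\cap\Or(\U),C(P'_A)\cap\Or(\U))$.

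I expect the first inclusion to be routine; the only trap is that a point of $\Or(\U)$ is in general merely a limit of orbit patterns and need not itself be realised by a point of $X$ (the members of $\U$ are open), which is why one passes to a nearby genuine orbit pattern on the relevant finite block of coordinates. The second inclusion carries the real content, which is the consistency check for the two coordinate prescriptions $U_k=P_A(k)$ on $A$ and $U_k=P'_A(kg^{-1})$ on $Ag$; that consistency is precisely what removing $A^{-1}A$ secures, and it explains the exact form of the statement.
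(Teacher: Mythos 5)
Your proof is correct and follows essentially the same route as the paper's: the backward inclusion is handled by gluing the two prescriptions $P_A$ on $A$ and $P'_A$ on $Ag$ (consistent precisely because $g\notin A^{-1}A$ forces $A\cap Ag=\emptyset$) and filling in the remaining coordinates along the orbit of $x$, while the forward inclusion passes from a point of $\Or(\U)$ to a genuine orbit pattern agreeing with it on the finite block $A\cup Ag$ — the paper phrases this via the identity $\mathcal{L}_K(\Or(\U))=\{P_K\in\U^K:\bigcap_{g\in K}\Phi_{g^{-1}}P_K(g)\nempty\}$, which is exactly the closure/approximation argument you spell out.
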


\begin{proof}
	Fix any $P_A,P'_A\in\mathcal{L}_A(\Or(\U))$.
	And let
	$$U=\bigcap_{g\in A}\Phi_{g^{-1}}(P_A(g)),V=\bigcap_{g\in A}\Phi_{g^{-1}}(P'_A(g)).$$
	
	First, fix any $h\in N(U,V)\setminus (A^{-1}A)$.
	Then there exists $x\in X$ such that $x\in U$ and $\Phi_h(x)\in V$.
	Let $B=A\cup Ah$, and $u\in\U^G$ satisfy $$\pi_g(u)=P_A(g),\,\,\pi_{gh}(u)=P'_A(g),\,g\in A,$$
	and
	$$\Phi_g(x)\in\pi_g(u),\,g\in G\setminus B.$$
	Since $h\notin A^{-1}A$, we have $A\cap Ah=\emptyset$, which implies that $u$ is well defined.
	By the definition of $U$ and $V$, $x\in U$ and $\Phi_h(x)\in V$ imply that $x\in\bigcap_{g\in G}\Phi_{g^{-1}}(\pi_g(u))$. Thus $u\in\Or(\U)$, $u\in C(P_A)\cap \Or(\U)$ and $\sigma_h(u)\in C(P'_A)\cap \Or(\U)$. Therefore, $h\in N(C(P_A)\cap \Or(\U),C(P'_A)\cap \Or(\U))$.
	
	Finally, for any $h\in N(C(P_A)\cap \Or(\U),C(P'_A)\cap \Or(\U))$, there exists $u\in\Or(\U)$ such that $u\in C(P_A)\cap \Or(\U)$ and $\sigma_h(u)\in C(P'_A)\cap \Or(\U)$.
	Let $B=A\cup Ah$.
	Since $u\in\Or(\U)$, take $x\in\bigcap_{g\in B}\Phi_{g^{-1}}(\pi_g(u))$, noticing that for any finite subset $K\subset G$,
	$$\mathcal{L}_K(\Or(\U))=\{P_K\in\U^K:\bigcap_{g\in K}\Phi_{g^{-1}}P_K(g)\nempty\}.$$
	In particular, $x\in\bigcap_{g\in A}\Phi_{g^{-1}}(\pi_g(u))$, and by $\pi_g(u)=P_A(g)$ for any $g\in A$.
	Thus $x\in U$.
	For any $g\in A$, $\Phi_{gh}(x)\in\pi_{gh}(u)=\pi_g(\sigma_h(u))$, that is $\Phi_h(x)\in\bigcap_{g\in A}\Phi_{g^{-1}}(\pi_g(\sigma_h(u)))$.
	So $\Phi_h(x)\in V$.
	Thus $h\in N(U,V)$.
\end{proof}

\begin{remark}
	If $X$ is totally disconnected space and $\U\in\Part(X)$, it can be similarly proved that $N(C(P_A)\cap \Or(\U),C(P'_A)\cap \Or(\U))=N(U,V)$, since for some $a,a'\in A$ with $a=a'h$, we have $\Phi_a(x)=\Phi_{a'h}(x)\in P_A(a)\cap P'_A(a')$ and then $P_A(a)=P'_A(a')$.
\end{remark}

\begin{lemma}\label{l:6.5}
	Let $(X,G,\Phi)$ be a dynamical system with compact Hausdorff space $X$, $\U\in\Part(X)$, and $A$ be a finite subset of $G$. Then for any $u\in\Or(\U)$ and any $P_A\in\mathcal{L}_A(\Or(\U))$, we have $N(u,C(P_A)\cap \Or(\U))=N(x,U)$ where
	$$x\in\bigcap_{g\in G}\Phi_{g^{-1}}(\pi_g(u)),U=\bigcap_{g\in A}\Phi_{g^{-1}}(P_A(g)).$$
\end{lemma}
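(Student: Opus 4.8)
The plan is to unwind both hitting-time sets into concrete membership conditions and then compare them, the crucial ingredient being that $\U$ is a clopen \emph{partition}, so every point of $X$ lies in exactly one member of $\U$. Throughout I fix $g\in G\setminus\{e_G\}$ and analyze when $g$ belongs to each side.

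First I would rewrite the left-hand set. Since $\Or(\U)$ is $G$-invariant (Lemma \ref{l:3.6}), $\sigma_g(u)\in\Or(\U)$ automatically, so $g\in N(u,C(P_A)\cap\Or(\U))$ is equivalent to $\sigma_g(u)\in C(P_A)$, i.e.\ $\pi_a(\sigma_g(u))=P_A(a)$ for every $a\in A$. Using the shift convention $\sigma_g(u)(h)=u(hg)$, this is exactly the condition $\pi_{ag}(u)=P_A(a)$ for all $a\in A$. Next I would rewrite the right-hand set: by definition $g\in N(x,U)$ means $x\in\Phi_{g^{-1}}(U)$, i.e.\ $\Phi_g(x)\in U$, and since $U=\bigcap_{a\in A}\Phi_{a^{-1}}(P_A(a))$ this says $\Phi_{ag}(x)\in P_A(a)$ for every $a\in A$.

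The key step is to match these two conditions. Because $x\in\bigcap_{h\in G}\Phi_{h^{-1}}(\pi_h(u))$, taking $h=ag$ gives $\Phi_{ag}(x)\in\pi_{ag}(u)$. Now $P_A(a)$ and $\pi_{ag}(u)$ are both members of the partition $\U$, so the point $\Phi_{ag}(x)$ lies in $P_A(a)$ if and only if $P_A(a)=\pi_{ag}(u)$. Hence ``$\Phi_{ag}(x)\in P_A(a)$ for all $a\in A$'' is equivalent to ``$\pi_{ag}(u)=P_A(a)$ for all $a\in A$'', which is precisely the condition characterizing $g\in N(u,C(P_A)\cap\Or(\U))$ from the first step. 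This yields the claimed equality $N(u,C(P_A)\cap\Or(\U))=N(x,U)$; in particular $N(x,U)$ is seen not to depend on the choice of $x$ in the intersection (which is nonempty exactly because $u\in\Or(\U)$).

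There is essentially no serious obstacle here; the only point demanding care is recognizing that the full equality — as opposed to the one-sided inclusions of Lemma \ref{l:6.3} — genuinely relies on $\U$ being a partition rather than merely a finite open cover, exactly the phenomenon flagged in the remark following Lemma \ref{l:6.3}, together with keeping the convention $\sigma_g(x)(h)=x(hg)$ straight so that the translate $ag$ lands on the correct side in each coordinate.
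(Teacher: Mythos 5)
Your proposal is correct and follows essentially the same argument as the paper: both reduce $g\in N(u,C(P_A)\cap\Or(\U))$ to the condition $\pi_{ag}(u)=P_A(a)$ for all $a\in A$, reduce $g\in N(x,U)$ to $\Phi_{ag}(x)\in P_A(a)$ for all $a\in A$, and use $\Phi_{ag}(x)\in\pi_{ag}(u)$ together with the fact that $\U$ is a partition to identify the two conditions. The only difference is cosmetic: you present it as a single chain of equivalences, while the paper proves the two inclusions separately.
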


\begin{proof}
	First, fix any $h\in N(x,U)$.
	Thus for any $g\in A$, we have $\Phi_h(x)\in \Phi_{g^{-1}}(P_A(g))$.
	Since $\Phi_{gh}(y)\in\pi_{gh}(u)$, we have $\Phi_h(y)\in\pi_{gh}(u)\cap P_A(g)$, that is, $\pi_g(\sigma_h(u))=\pi_{gh}(u)=P_A(g)$. Therefore, $\sigma_h(u)\in C(P_A)\cap \Or(\U)$.
	
	For the converse, fix any $h\in N(u,C(P_A)\cap \Or(\U))$ and any $x\in\bigcap_{g\in G}\Phi_{g^{-1}}(\pi_g(u))$.
	Thus, for any $g\in A$, $\Phi_{gh}(x)\in\pi_{gh}(u)=\pi_g(\sigma_h(u))=P_A(g)$. Then $\Phi_h(x)\in U$.	
\end{proof}

Notice that $\ilim\{\phi^\eta_\lambda,X_\lambda\}\times\ilim\{\phi^\eta_\lambda,X_\lambda\}$ is conjugate to $(\ilim\{\phi^\eta_\lambda\times\phi^\eta_\lambda,X_\lambda\times X_\lambda\}$ by the map
$$(\{x_\lambda\}_{\lambda\in\Lambda},\{x'_\lambda\}_{\lambda\in\Lambda})\mapsto \{(x_\lambda,x'_\lambda)\}_{\lambda\in\Lambda},$$
and $\Or(\U)\times\Or(\U)$ is conjugate to $\Or(\U\times\U)$ by the map
$$(\{U_g\}_{g\in G},\{V_g\}_{g\in G})\mapsto \{U_g\times V_g\}_{g\in G},$$
where $\U\times\U:=\{U\times V:U,V\in\U\}$.
Thus by Lemma \ref{l:6.1}, Lemma \ref{l:6.2}, Lemma \ref{l:6.3} and Lemma \ref{l:6.5}, we get the following lemmas.
\begin{lemma}\label{l:6.6}
	Let $(\Lambda,\le)$ be a directed set. Suppose that  $(X,G,\Phi^*)$ be an inverse limit of an ML inverse system $(\phi^\eta_\lambda,(X_\lambda,G,\Phi^\lambda))$.
	If $(X_\lambda,G,\Phi^\lambda)$ is transitive (minimal, totally transitive, weakly mixing, or mixing) for each $\lambda\in\Lambda$, then $(X,G,\Phi^*)$ is transitive (minimal, totally transitive, weakly mixing, or mixing).
\end{lemma}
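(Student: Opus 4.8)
The plan is to reduce each of the six properties for $(X,G,\Phi^*)$ to the very same property for (finitely many, in fact a single one of) the coordinate systems $(X_\lambda,G,\Phi^\lambda)$, transporting hitting time sets between the levels by means of Lemma \ref{l:6.1} and Lemma \ref{l:6.2}.

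First I would dispatch transitivity, totally transitivity and mixing together, since all three are assertions about the set $N(U,V)$ for nonempty open $U,V\subset X$. Fix such $U,V$. By Lemma \ref{l:6.1} there are $\gamma\in\Lambda$ and nonempty open $U_\gamma,V_\gamma\subset X_\gamma$ with $N(U_\gamma,V_\gamma)\subset N(U,V)$. If $X_\gamma$ is transitive, then $N(U_\gamma,V_\gamma)$ is infinite, hence so is $N(U,V)$; if $X_\gamma$ is mixing, then $G\setminus N(U_\gamma,V_\gamma)$ is finite, and since $G\setminus N(U,V)\subset G\setminus N(U_\gamma,V_\gamma)$ it is finite too; if $X_\gamma$ is totally transitive, then for the subgroup $G'$ generated by an arbitrary countable $K\subset G$ we have $N(U_\gamma,V_\gamma)\cap G'\nempty$, whence $N(U,V)\cap G'\nempty$. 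This settles those three cases at once. For minimality I would instead invoke Lemma \ref{l:6.2}: by the stated characterization it suffices to check that $N(x,U)$ is syndetic for every $x\in X$ and every nonempty open $U\subset X$. Lemma \ref{l:6.2} gives $\lambda\in\Lambda$ and a nonempty open $U_\lambda\subset X_\lambda$ with $N(\pi_\lambda(x),U_\lambda)\subset N(x,U)$; since $X_\lambda$ is minimal, $N(\pi_\lambda(x),U_\lambda)$ is syndetic, say $F\,N(\pi_\lambda(x),U_\lambda)=G$ with $F\subset G$ finite, and then $F\,N(x,U)\supset F\,N(\pi_\lambda(x),U_\lambda)=G$.

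For weak mixing I would pass to the product system. Using the conjugacies recorded just before the statement, $(X\times X,G,\Phi^*\times\Phi^*)$ is conjugate to the inverse limit of the inverse system $(\phi^\eta_\lambda\times\phi^\eta_\lambda,(X_\lambda\times X_\lambda,G,\Phi^\lambda\times\Phi^\lambda))$, and this product system is again ML: if $\eta\ge\lambda$ witnesses the ML condition for $(\phi^\eta_\lambda,X_\lambda)$, then $(\phi^\eta_\lambda\times\phi^\eta_\lambda)(X_\eta\times X_\eta)=\phi^\eta_\lambda(X_\eta)\times\phi^\eta_\lambda(X_\eta)$ is the eventual image at level $\lambda$ for the product system as well. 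Each $X_\lambda$ weakly mixing means each $X_\lambda\times X_\lambda$ is transitive, so the transitivity case already proved, applied to the product system, gives that $X\times X$ is transitive, i.e. $X$ is weakly mixing.

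The routine cases are genuinely routine; the only place requiring a little care is the weak-mixing case, where one must spell out that a product of ML inverse systems is ML and that the inverse limit of the products is (conjugate to) the product of the inverse limits, so that the transitivity argument can be run verbatim on the product system. Beyond this bookkeeping I do not anticipate any serious obstacle.
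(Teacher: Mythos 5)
Your proposal is correct and follows essentially the same route the paper intends: the paper derives Lemma \ref{l:6.6} directly from Lemma \ref{l:6.1} (for transitivity, total transitivity, mixing, and, via the product conjugacy noted just before the statement, weak mixing) and Lemma \ref{l:6.2} (for minimality via syndetic hitting sets). You merely spell out the details the paper leaves implicit, including the check that the product of ML inverse systems is again ML.
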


\begin{lemma}\label{l:6.7}
	Suppose that $(X,G,\Phi)$ is a dynamical system and $\U\in\FOC(X)$.
	If $(X,G,\Phi)$ is transitive (totally transitive, weakly mixing, or mixing), then $(\Or(\U),G,\sigma)$ is transitive (totally transitive, weakly mixing, or mixing). Moreover, if $\U\in\Part(X)$ and $(X,G,\Phi)$ is minimal, then $(\Or(\U),G,\sigma)$ is minimal.
\end{lemma}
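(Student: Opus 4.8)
The plan is to test each property on the basic nonempty open sets of $\Or(\U)$ — the cylinders $C(P_A)\cap\Or(\U)$ with $A\subset G$ finite and $P_A\in\mathcal{L}_A(\Or(\U))$ — and to transfer the relevant hitting-time condition into $X$, using Lemma~\ref{l:6.3} for the transitivity-type properties and Lemma~\ref{l:6.5} for minimality. Given two such cylinders $\mathcal U=C(P_A)\cap\Or(\U)$ and $\mathcal V=C(P'_A)\cap\Or(\U)$ (after replacing $A$ by a common finite superset we may take both patterns on the same $A$), I would set $U=\bigcap_{g\in A}\Phi_{g^{-1}}(P_A(g))$ and $V=\bigcap_{g\in A}\Phi_{g^{-1}}(P'_A(g))$. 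These are nonempty (because $P_A,P'_A$ lie in the language of $\Or(\U)$) open subsets of $X$, and Lemma~\ref{l:6.3} gives $N(U,V)\setminus(A^{-1}A)\subset N(\mathcal U,\mathcal V)\subset N(U,V)$, where $A^{-1}A$ is finite.

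For transitivity: if $N(U,V)$ is infinite then $N(\mathcal U,\mathcal V)$ is infinite, since it differs from $N(U,V)$ by a finite set; hence $\Or(\U)$ is transitive. For mixing: $G\setminus N(\mathcal U,\mathcal V)\subset(G\setminus N(U,V))\cup(A^{-1}A)$ is finite. For weak mixing I would not argue directly but pass to the product: since $\Or(\U)\times\Or(\U)$ is conjugate to $\Or(\U\times\U)$ with $\U\times\U\in\FOC(X\times X)$, and $(X\times X,G,\Phi\times\Phi)$ is transitive, the transitivity case already proved, applied to the system $(X\times X,G,\Phi\times\Phi)$ and the cover $\U\times\U$, shows that $\Or(\U\times\U)$, hence $\Or(\U)\times\Or(\U)$, is transitive; that is exactly weak mixing of $\Or(\U)$.

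For total transitivity: fix a countable $K\subset G$, put $G'=\langle K\rangle$, and take cylinders $\mathcal U,\mathcal V$ as above; I must produce $g\in G'\cap N(U,V)$ with $g\notin A^{-1}A$. Since Lemma~\ref{l:6.3} only hands me $N(U,V)\setminus(A^{-1}A)$, I first need to upgrade total transitivity to the statement that $N(U,V)\cap G'$ meets every cofinite subset of $G'$. I would obtain this by a shrinking argument: if $N(U,V)\cap G'\subset\{g_1,\dots,g_k\}$, then picking $x\in U$ and pairwise disjoint open sets separating a point $q\in V$ from $\Phi_{g_1}(x),\dots,\Phi_{g_k}(x)$ produces nonempty open $U'\subset U$ and $V'\subset V$ with $\Phi_{g_i}(U')\cap V'=\emptyset$ for every $i$, whence $N(U',V')\cap G'=\emptyset$, contradicting total transitivity of $X$ applied to $U',V',K$. (If $X$ has an isolated point I would treat this case separately, the relevant orbit structure then being degenerate.)

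For minimality, where in addition $\U\in\Part(X)$: fix $u\in\Or(\U)$ and a basic nonempty open $\mathcal U=C(P_A)\cap\Or(\U)$. Since $\U$ is a partition, a point $x\in\bigcap_{g\in G}\Phi_{g^{-1}}(\pi_g(u))$ exists, and Lemma~\ref{l:6.5} gives $N(u,\mathcal U)=N(x,U)$ with $U=\bigcap_{g\in A}\Phi_{g^{-1}}(P_A(g))$ nonempty open; minimality of $X$ makes $N(x,U)$ syndetic, hence $N(u,\mathcal U)$ syndetic, so by the syndeticity criterion for minimality recalled above $\Or(\U)$ is minimal. I expect the only real obstacle to be the total-transitivity bookkeeping: for the other transitivity-type properties the exceptional finite set $A^{-1}A$ is absorbed automatically by ``infinite'' or ``cofinite'', and for minimality Lemma~\ref{l:6.5} — available precisely because $\U\in\Part(X)$ — upgrades the inclusion of Lemma~\ref{l:6.3} to an exact equality of hitting sets, so no exceptional set arises there.
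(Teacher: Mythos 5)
Your proposal matches the paper's (essentially unwritten) proof of Lemma~\ref{l:6.7}: the paper obtains it by testing on cylinders and invoking Lemma~\ref{l:6.3} for the transitivity-type properties, the conjugacy $\Or(\U)\times\Or(\U)\cong\Or(\U\times\U)$ for weak mixing, and Lemma~\ref{l:6.5} for minimality, exactly as you do. Your additional step for total transitivity --- upgrading ``$N(U,V)\cap G'\neq\emptyset$'' to ``$N(U,V)\cap G'$ is not contained in the finite set $A^{-1}A$'' by the shrinking argument --- addresses a point the paper's one-line derivation silently skips over, and is correct modulo the isolated-point case you explicitly set aside (which is degenerate for totally transitive systems).
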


We also prove a similar result for specification property in totally disconnected space. We used the definition of specification in \cite{CL15}. For a dynamical system $(X,G,\Phi)$ with compact metric space $X$, $\Phi$ has \emph{specification property} if for any $\eps>0$, there exists nonempty finite subset $F\subset G$ with the following property: for any finite collection $F_1,F_2,...,F_m$ of finite subsets of $G$ with
$$FF_i\cap F_j=\emptyset\text{ for any } 1\le i,j\le m\text{ and } i\neq j, $$
and for any $x_1,x_2,...,x_m\in X$, there exists $y\in X$ such that $d(\Phi_s(y),\Phi_s(x_i))<\eps$ for each $s\in F_i$ and $i=1,2,...,m$.

The following lemma shows the topological definition of specification property.

\begin{lemma}
	 For a dynamical system $(X,G,\Phi)$ with compact metric space $X$, $\Phi$ has specification property if and only if for any $\U\in\FOC(X)$, there exists nonempty finite subset $F\subset G$ with the following property: for any finite collection $F_1,F_2,...,F_m$ of finite subsets of $G$ with
	 $$FF_i\cap F_j=\emptyset\text{ for any } 1\le i,j\le m\text{ and } i\neq j, $$
	 and for any $x_1,x_2,...,x_m\in X$, there exists $y\in X$ such that for each $i=1,2,...,m$ and each $s\in F_i$, there exists $U_{(i,s)}\in\U$ with $\Phi_s(y),\Phi_s(x_i)\in U_{(i,s)}$.
\end{lemma}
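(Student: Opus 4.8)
The plan is to prove the equivalence directly by translating between the metric formulation of specification and the cover formulation, exactly as was done for the shadowing property earlier in the paper. The two directions are entirely parallel to the proof that $S$-shadowing in the metric sense is equivalent to the $\FOC(X)$-cover version, so the argument is a routine adaptation with the finite subset $F\subset G$ carried along untouched.

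For the forward direction, assume $\Phi$ has specification in the metric sense and fix $\U\in\FOC(X)$. First I would take $\eps=\lambda(\U)$ to be a Lebesgue number of $\U$, and let $F\subset G$ be the finite set witnessing specification for this $\eps$. Given any finite collection $F_1,\dots,F_m$ with $FF_i\cap F_j=\emptyset$ for $i\neq j$ and any points $x_1,\dots,x_m\in X$, specification gives $y\in X$ with $d(\Phi_s(y),\Phi_s(x_i))<\eps$ for all $s\in F_i$, $i=1,\dots,m$. For each such pair $(i,s)$ the set $\{\Phi_s(y),\Phi_s(x_i)\}$ has diameter less than $\lambda(\U)$, so it is contained in a single member $U_{(i,s)}\in\U$; this is precisely the cover conclusion, so the same $F$ works.

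For the converse, assume the cover formulation and fix $\eps>0$. I would choose $\U$ to be a finite subcover of $\{B(x,\eps/2):x\in X\}$, which exists by compactness, so that $\diam(\U)<\eps$. Let $F\subset G$ be the finite set the cover hypothesis provides for this $\U$. Given $F_1,\dots,F_m$ with $FF_i\cap F_j=\emptyset$ for $i\neq j$ and points $x_1,\dots,x_m$, the hypothesis yields $y\in X$ such that for each $i$ and each $s\in F_i$ there is $U_{(i,s)}\in\U$ with $\Phi_s(y),\Phi_s(x_i)\in U_{(i,s)}$; since $\diam(U_{(i,s)})\le\diam(\U)<\eps$, we get $d(\Phi_s(y),\Phi_s(x_i))<\eps$, which is the metric conclusion. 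Hence the same $F$ witnesses specification in the metric sense.

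There is essentially no serious obstacle here: the only point requiring a moment's care is the bookkeeping that the finite ``gap set'' $F$ and the collections $F_1,\dots,F_m$ together with the disjointness condition $FF_i\cap F_j=\emptyset$ are exactly the same in both formulations, so that the quantifier structure (``there exists $F$ such that for all $F_1,\dots,F_m$ and all $x_1,\dots,x_m$ there exists $y$'') is preserved verbatim under the translation. The passage between metric balls and cover elements is handled in one direction by a Lebesgue number and in the other by bounding diameters, precisely as in the earlier lemma characterizing $S$-shadowing on compact metric spaces; I would simply mirror that proof.
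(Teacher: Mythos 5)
Your proof is correct and follows the same route as the paper: the forward direction uses a Lebesgue number of $\U$ to convert the $\eps$-closeness from metric specification into containment in a single cover element, and the converse uses a finite subcover of $\{B(x,\eps/2):x\in X\}$ so that $\diam(\U)<\eps$ yields the metric estimate, with the gap set $F$ carried through unchanged in both directions. No issues.
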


\begin{proof}
	First, suppose that $\Phi$ has specification property.
	For any $\U\in\FOC(X)$, let $\eps$ be the Lebesgue number of $\U$, and finite subset $F\subset G$ witness the specification property with respect to $\eps$.
	Fix any finite collection $F_1,F_2,...,F_m$ of finite subsets of $G$ with
	$$FF_i\cap F_j=\emptyset\text{ for any } 1\le i,j\le m\text{ and } i\neq j, $$
	and for any $x_1,x_2,...,x_m\in X$, there exists $y\in X$ such that for each $i=1,2,...,m$ and each $s\in F_i$, $d(\Phi_s(y),\Phi_s(x_i))<\eps$, which implies that there exists $U_{(i,s)}\in\U$, $\Phi_s(y),\Phi_s(x_i)\in U_{(i,s)}$.
	
	For the converse, fix any $\eps>0$.
	Let $\U$ be a finite subcover of $\{B(x,\eps/2):x\in X\}$, and finite subset $F\subset G$ witness the condition with respect to $\U$.
	Similarly, by $\diam(\U)<\eps$, we can get $\Phi$ has specification property.
\end{proof}

So the definition of specification property can be also generalized to compact Hausdorff space.
For a dynamical system $(X,G,\Phi)$ with compact Hausdorff space $X$, $\Phi$ (or $(X,G,\Phi)$) has \emph{specification property} if, for any $\U\in\FOC(X)$, there exists nonempty finite subset $F\subset G$ with the following property: for any finite collection $F_1,F_2,...,F_m$ of finite subsets of $G$ with
$$FF_i\cap F_j=\emptyset\text{ for any } 1\le i,j\le m\text{ and } i\neq j, $$
and for any $x_1,x_2,...,x_m\in X$, there exists $y\in X$ such that for each $i=1,2,...,m$ and each $s\in F_i$, there exists $U_{(i,s)}\in\U$ with $\Phi_s(y),\Phi_s(x_i)\in U_{(i,s)}$.

\begin{lemma}\label{l:6.9}
	Let $(\Lambda,\le)$ be a directed set. Suppose that  $(X,G,\Phi^*)$ be an inverse limit of an ML inverse system $(\phi^\eta_\lambda,(X_\lambda,G,\Phi^\lambda))$.
	If there exists $\eta\in\Lambda$ such that $\Phi^\lambda$ has specification property for each $\lambda\ge\eta$, then $\Phi^*$ has specification property.
\end{lemma}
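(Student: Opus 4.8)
The plan is to follow the same template as the proof of Theorem \ref{t:3.4}: push a finite open cover of $X=\ilim\{\phi^\eta_\lambda,X_\lambda\}$ down to a finite level, move to a level that simultaneously witnesses the Mittag--Leffler condition and enjoys the specification property, run the specification property there, and lift the constructed point back to $X$. Concretely, I would fix $\U\in\FOC(X)$ and, exactly as in the proof of Theorem \ref{t:3.4}, use that the sets $\pi_\lambda^{-1}(U_\lambda)\cap X$ form a basis together with directedness of $\Lambda$ to obtain $\lambda_0\in\Lambda$ and a finite open cover $\W_{\lambda_0}$ of $X_{\lambda_0}$ with $\{\pi_{\lambda_0}^{-1}(W)\cap X:W\in\W_{\lambda_0}\}$ refining $\U$. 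Then I would pick $\mu\ge\lambda_0$ with $\mu\ge\eta$ by directedness, take $\nu'\ge\lambda_0$ witnessing the ML condition relative to $\lambda_0$, and finally $\nu\ge\mu$ with $\nu\ge\nu'$; such a $\nu$ still witnesses the ML condition relative to $\lambda_0$ and satisfies $\nu\ge\eta$, so $\Phi^\nu$ has the specification property. Setting $\W_\nu=\{(\phi^\nu_{\lambda_0})^{-1}(W):W\in\W_{\lambda_0}\}\in\FOC(X_\nu)$, I let $F\subset G$ be a finite set witnessing the specification property of $\Phi^\nu$ with respect to $\W_\nu$, and claim that this $F$ witnesses the specification property of $\Phi^*$ with respect to $\U$.

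To prove the claim, I would take any finite collection $F_1,\dots,F_m$ of finite subsets of $G$ with $FF_i\cap F_j=\emptyset$ for $i\neq j$ and any $x_1,\dots,x_m\in X$, and apply the specification property of $\Phi^\nu$ to the points $\pi_\nu(x_1),\dots,\pi_\nu(x_m)$ (the combinatorial hypothesis on the $F_i$ transfers verbatim, since it involves only $G$ and the same set $F$). This produces $y_\nu\in X_\nu$ such that for every $i$ and every $s\in F_i$ there is $W_{(i,s)}\in\W_\nu$ with $\Phi^\nu_s(y_\nu),\Phi^\nu_s(\pi_\nu(x_i))\in W_{(i,s)}$. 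Since $\phi^\nu_{\lambda_0}(y_\nu)\in\phi^\nu_{\lambda_0}(X_\nu)$ and $\nu$ witnesses the ML condition relative to $\lambda_0$, there is $y\in X$ with $\pi_{\lambda_0}(y)=\phi^\nu_{\lambda_0}(y_\nu)$. Using that the bonding maps commute with the actions and that $\pi_{\lambda_0}(x_i)=\phi^\nu_{\lambda_0}(\pi_\nu(x_i))$, for each $i$ and each $s\in F_i$ both $\pi_{\lambda_0}(\Phi^*_s(y))=\phi^\nu_{\lambda_0}(\Phi^\nu_s(y_\nu))$ and $\pi_{\lambda_0}(\Phi^*_s(x_i))=\phi^\nu_{\lambda_0}(\Phi^\nu_s(\pi_\nu(x_i)))$ lie in $\phi^\nu_{\lambda_0}(W_{(i,s)})\subset W'_{(i,s)}$, where $W'_{(i,s)}\in\W_{\lambda_0}$ is the element with $W_{(i,s)}=(\phi^\nu_{\lambda_0})^{-1}(W'_{(i,s)})$. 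Hence $\Phi^*_s(y)$ and $\Phi^*_s(x_i)$ both lie in $\pi_{\lambda_0}^{-1}(W'_{(i,s)})\cap X$, which in turn is contained in some $U_{(i,s)}\in\U$ because $\{\pi_{\lambda_0}^{-1}(W)\cap X:W\in\W_{\lambda_0}\}$ refines $\U$; this is exactly the conclusion required of $\Phi^*$.

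I expect no essential obstacle: the argument is mechanical once $\nu$ is fixed, and the only delicate point is the choice of $\nu$, which must simultaneously lie above $\eta$ (so that $\Phi^\nu$ has the specification property) and witness the ML condition relative to $\lambda_0$ (so that $\phi^\nu_{\lambda_0}(y_\nu)$ lifts to a point of $X$); this is secured by one application of directedness after producing an ML witness, together with the standard observation that any index above an ML witness for $\lambda_0$ is again an ML witness for $\lambda_0$. The combinatorial constraint $FF_i\cap F_j=\emptyset$ never causes trouble, because it is purely a statement about $G$ and the fixed finite set $F$ obtained at level $\nu$, and the same $F$ is returned as the witness for $\Phi^*$.
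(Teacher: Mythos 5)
Your proposal is correct and follows essentially the same route as the paper's proof: push the cover down to a level $\lambda_0$, move to an index that is simultaneously above $\eta$ and an ML witness for $\lambda_0$, pull the cover back, apply specification there to the projections $\pi_\nu(x_i)$, and lift the resulting point through the ML condition. The only (immaterial) difference is the bookkeeping of indices — the paper first takes $\eta'\ge\eta,\lambda$ and then an ML witness $\gamma\ge\eta'$, while you take an ML witness for $\lambda_0$ first and then pass to a larger index, using the (correct) observation that any index above an ML witness is again an ML witness.
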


\begin{proof}
	Fix any $\U\in\FOC(X)$.
	There exist $\lambda$ and $\U_\lambda\in\FOC(X_\lambda)$ such that $\{(\pi_\lambda)^{-1}(U_\lambda)\cap X:U_\lambda\in\U_\lambda\}\succ \U$.
	Take $\eta'$ satisfying $\eta'\ge\eta$ and $\eta'\ge\lambda$.
	By the ML condition, there exists $\gamma\ge\eta'$ such that for any $\mu\ge\gamma$, $\phi^\mu_{\eta'}(X_\mu)=\phi^\gamma_{\eta'}(X_\gamma)$.
	Let $\U_\gamma=\{(\phi^\gamma_{\lambda})^{-1}(U_\lambda):U_\lambda\in\U_\lambda\}$.
	Since $X_\gamma$ has specification property, there exists a finite subset $F\subset G$ which witnesses the specification property with respect to $\U_\gamma$.
	
	Fix a finite collection $F_1,F_2,...,F_m$ of finite subsets of $G$ with
	$$FF_i\cap F_j=\emptyset\text{ for any } 1\le i,j\le m\text{ and } i\neq j, $$
	and $m$ points $x^1,x^2,...,x^m\in X$.
	For $x_\gamma^i=\pi_\gamma(x^i)$, $i=1,2,...,m$, by the specification property of $\Phi^\gamma$, there exists $y_\gamma\in X_\gamma$ such that for each $i=1,2,...,m$ and each $s\in F_i$, there exists $U_{(i,s,\gamma)}\in\U_\gamma$, $\Phi^\gamma_s(y_\gamma),\Phi^\gamma_s(x_\gamma^i)\in U_{(i,s,\gamma)}$.
	Then there exists $y\in X$ with $\pi_\lambda(y)=\phi^\gamma_{\lambda}(y_\gamma)$.
	For each $i=1,2,...,m$ and each $s\in F_i$, let $U_{(i,s)}\in\U_\lambda$ satisfy $U_{(i,s,\gamma)}=(\phi^\gamma_{\lambda})^{-1}(U_{(i,s)})$.
	Thus for each $i=1,2,...,m$ and each $s\in F_i$,
	$$\pi_\lambda(\Phi^*_s(y))=\Phi_s^\lambda(\pi_\lambda(y))=\Phi_s^\lambda(\phi^\gamma_{\lambda}(y_\gamma))=\phi^\gamma_{\lambda}(\Phi_s^\gamma(y_\gamma))\in U_{(i,s)},$$
	and
	$$\pi_\lambda(\Phi^*_s(x^i))=\Phi_s^\lambda(\pi_\lambda(x^i))=\Phi_s^\lambda(\phi^\gamma_{\lambda}(x^i_\gamma))=\phi^\gamma_{\lambda}(\Phi_s^\gamma(x^i_\gamma))\in U_{(i,s)},$$
	which ends the proof by $\{(\pi_\lambda)^{-1}(U_\lambda)\cap X:U_\lambda\in\U_\lambda\}\succ \U$.
\end{proof}

On the other hand, we consider the relationship of specification property between the system and its orbit space.

\begin{lemma}\label{l:6.10}
	Let $(X,G,\Phi)$ be a dynamical system with compact totally disconnected space $X$, and $\U\in\Part(X)$. If $\Phi$ has specification property, then $(\Or(\U),G,\sigma)$ has specification property.
\end{lemma}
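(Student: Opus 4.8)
The plan is to transport the specification property from $(X,G,\Phi)$ to $(\Or(\U),G,\sigma)$ by using the fact that $\U$ is a clopen partition, so that a point $u\in\Or(\U)$ is essentially determined by a point of $X$ lying in $\bigcap_{g\in G}\Phi_{g^{-1}}(\pi_g(u))$, and conversely every such point yields an orbit pattern in $\Or(\U)$. First I would apply the topological definition of specification for the Hausdorff space $X$ to the cover $\U$ itself (which is in $\FOC(X)$ since $\U\in\Part(X)$), obtaining a nonempty finite set $F\subset G$ witnessing specification with respect to $\U$. I claim this same $F$ witnesses specification for $(\Or(\U),G,\sigma)$ with respect to the given cover $\mathcal{V}\in\FOC(\Or(\U))$ — but since $\Or(\U)\subset\U^G$ and cylinder sets on finite windows form a basis, it suffices to handle the case where $\mathcal{V}$ is refined by $\{C(P_A)\cap\Or(\U):P_A\in\mathcal{L}_A(\Or(\U))\}$ for some finite $A\subset G$ containing $e_G$; so I would first reduce to such a $\mathcal{V}$ and then show $FA^{-1}A$ (or some finite enlargement of $F$) works.

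Next, given a finite collection $F_1,\dots,F_m$ of finite subsets of $G$ with $FF_i\cap F_j=\emptyset$ for $i\neq j$ and points $u^1,\dots,u^m\in\Or(\U)$, I would pick for each $i$ a point $x^i\in\bigcap_{g\in G}\Phi_{g^{-1}}(\pi_g(u^i))$, which exists by definition of $\Or(\U)$ (recall $\U\in\Part(X)$, so $\Or(\U)=\{u\in\U^G:\bigcap_g\Phi_{g^{-1}}(\pi_g(u))\nempty\}$). Applying specification of $\Phi$ to the cover $\U$, the finite sets $\{F_iA: i=1,\dots,m\}$ (after checking the separation condition $FF_iA\cap F_jA=\emptyset$ holds, which is where I need to replace $F$ by a suitable finite enlargement absorbing the $A$-translates, e.g. work with the sets $F_iA$ from the start and a finite $F'\supset F$ with $F'F_iA\cap F_jA=\emptyset$), and the points $x^1,\dots,x^m$, I get $y\in X$ such that for each $i$ and each $s\in F_iA$ there is $U_{(i,s)}\in\U$ with $\Phi_s(y),\Phi_s(x^i)\in U_{(i,s)}$; since $\U$ is a partition this $U_{(i,s)}$ is unique. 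Then I would define $v\in\U^G$ by $\pi_g(v)=$ the unique element of $\U$ containing $\Phi_g(y)$; this $v$ lies in $\Or(\U)$ because $y\in\bigcap_g\Phi_{g^{-1}}(\pi_g(v))$, and $\sigma_s(v)$ and $\sigma_s(u^i)$ agree on the window $A$ for each $s\in F_i$ — indeed for $a\in A$ we have $\Phi_{as}(y)\in U_{(i,as)}$ and $\Phi_{as}(x^i)\in\pi_{as}(u^i)$, and since $\Phi_{as}(x^i)$ also lies in $U_{(i,as)}$, disjointness of $\U$ forces $\pi_{as}(u^i)=U_{(i,as)}=\pi_{as}(v)$, i.e. $\pi_a(\sigma_s v)=\pi_a(\sigma_s u^i)$. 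Hence $\sigma_s(v)$ and $\sigma_s(u^i)$ lie in the common cylinder $C(\pi_A(\sigma_s u^i))\cap\Or(\U)$, which is contained in a single element of $\mathcal{V}$.

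The main obstacle I anticipate is purely bookkeeping: arranging the finite set witnessing specification for $\Or(\U)$ so that the disjointness hypothesis $FF_i\cap F_j=\emptyset$ on the orbit-space side translates into a usable disjointness hypothesis $F'F_iA\cap F_jA=\emptyset$ on the $X$-side. This is handled by observing that for a finite window $A$ we only need $\mathcal{V}$ to be refined by $A$-cylinders, and then choosing the orbit-space witness to be a finite set of the form $\{g\in G: gA\cap A^{-1}FA\neq\emptyset\}$ or similar — any finite set large enough that $F_i$'s separated by it remain separated after thickening by $A$ on both sides when we feed $F_iA$ into the specification property of $\Phi$. No genuine dynamical difficulty arises here; the clopen-partition hypothesis is exactly what makes the pattern $v$ well-defined and makes the agreement-on-windows argument go through via disjointness, mirroring the remark after Lemma \ref{l:6.3} and the proof of Lemma \ref{l:6.5}.
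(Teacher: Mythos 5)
Your argument is correct in substance, but it takes a different route from the paper at the key step. The paper does not thicken the time sets at all: it pulls the $A$-cylinder partition of $\Or(\U)$ back to the clopen partition $\V=\{\bigcap_{g\in A}\Phi_{g^{-1}}(P_A(g)):P_A\in\mathcal{L}_A(\Or(\U))\}$ of $X$ and applies the specification property of $\Phi$ to $\V$ with the \emph{original} sets $F_1,\dots,F_m$; landing in a single element of $\V$ at time $s$ then automatically yields agreement of the whole $A$-window at $s$. The witness set for $\Or(\U)$ is then literally the witness set for $\Phi$ with respect to $\V$, and the separation hypothesis $FF_i\cap F_j=\emptyset$ is used verbatim, so no bookkeeping arises. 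Your route keeps the cover $\U$ on $X$ and thickens the time sets instead, which forces you to enlarge the witness; this works, but two points need care. First, the thickened sets are $AF_i=\{as: a\in A,\ s\in F_i\}$, not $F_iA$ as written (you then correctly use elements of the form $as$, so this is only a notational slip, but since $G$ is not assumed abelian it must be $AF_i$). Second, the enlargement can be made explicit rather than gestured at: $FAF_i\cap AF_j\nempty$ if and only if $(A^{-1}FA)F_i\cap F_j\nempty$, so $\tilde F=A^{-1}FA$ serves as the witness for $\Or(\U)$; this computation succeeds precisely because both the thickening and the separation condition act by left multiplication, and it would fail for right thickening in a non-abelian group. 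With those two points fixed, your construction of the tracing pattern $v$ as the $\U$-itinerary of $y$, and the use of disjointness of the partition to force $\pi_{as}(v)=\pi_{as}(u^i)$, coincide with the paper's. What the cover-refinement approach buys is the complete absence of separation bookkeeping; what your version makes visible is exactly where left-invariance of the separation condition is needed.
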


\begin{proof}
	Without loss of generality, fix any finite subset $A\subset G$.
	So $\{C(P_A)\cap \Or(\U):P_A\in\mathcal{L}_A(\Or(\U))\}\in\Part(\Or(\U))$.
	Let
	$$\V=\{\bigcap_{g\in A}\Phi_{g^{-1}}(P_A(g)):P_A\in\mathcal{L}_A(\Or(\U))\}.$$
	Thus $\V\in \Part(X)$.
	Since $\Phi$ has specification property, there exists a finite subset $F\subset G$ which witnesses the property with respect to $\V$.
	
	Fix a finite collection $F_1,F_2,...,F_m$ of finite subsets of $G$ with
	$$FF_i\cap F_j=\emptyset\text{ for any } 1\le i,j\le m\text{ and } i\neq j, $$
	and $u_1,u_2,...,u_m\in\Or(\U)$.
	Take $x_i\in\bigcap_{g\in G}\Phi_{g^{-1}}(\pi_g(u_i))$.
	Then there exists $y\in X$ satisfying that, for each $i=1,2,...,m$ and each $s\in F_i$, there exists $V_{i,s}\in\V$ such that $\Phi_s(y),\Phi_s(x_i)\in V_{i,s}$.
	Let $P_A^{(i,s)}\in\mathcal{L}_A(\Or(\U))$ satisfy $V_{i,s}=\bigcap_{g\in A}\Phi_{g^{-1}}(P^{(i,s)}_A(g))$ and $w\in\Or(\U)$ satisfy $\Phi_g(y)\in\pi_g(w)$.
	Thus $\Phi_s(y),\Phi_s(x_i)\in V_{i,s}$ implies that $\Phi_{gs}(y),\Phi_{gs}(x_i)\in P^{(i,s)}_A(g)$.
	So $\pi_{gs}(w)=P^{(i,s)}_A(g)=\pi_{gs}(u_i)$ for each $g\in A$.
	Therefore, for each $i=1,2,...,m$ and each $s\in F_i$, we have $\sigma_s(w),\sigma_s(u_i)\in C(P^{(i,s)}_A)$, which ends the proof.
\end{proof}

\begin{remark}
	If $X$ is not totally disconnected, we do not know whether the above lemma holds.
	Although $\Phi_{gs}(x_i)\in P^{(i,s)}_A(g)\cap\pi_{gs}(u_i)$ and we can choose $\omega$ such that $\pi_{gs}(\omega)=P^{(i,s)}_A(g)$, it may happen that $\Phi_{gs}(y)\in P^{(i,s)}_A(g)\setminus \pi_{gs}(u_i)$. In this case, $\omega$ can not be the tracing point.
\end{remark}

%Combining with Corollary \ref{c:4.2}, we have the following result.

Now, we give the proof of Theorem \ref{t:1.3}.
%\begin{theorem}\label{t:7.9}
	%Suppose that $(X,G,\Phi)$ is a dynamical system, where $X$ is compact totally disconnected Hausdorff space. Let property $P$ be one of the following properties: transitivity, minimal, totally transitivity, weakly mixing, mixing, and specification property. Then $\Phi$ has property $P$ if and only if $(X,G,\Phi)$ is conjugate to the inverse limit of an ML inverse system of subshifts with property $P$.
%\end{theorem}

\begin{proof}[Proof of Theorem \ref{t:1.3}]
	Let $(X,G,\Phi)$ be a dynamical system with compact totally disconnected space $X$. By Corollary \ref{c:4.2}, $(X,G,\Phi)$ is conjugate to $(\ilim\{\iota,\Or(\U_\lambda)\},G,\sigma^*)$. Let property $P$ be one of the following properties: transitivity, minimal, totally transitivity, weakly mixing, mixing, and specification property. By Lemma \ref{l:6.6}, Lemma \ref{l:6.7}, Lemma \ref{l:6.9} and Lemma \ref{l:6.10}, it ends the proof.
\end{proof}

\begin{remark}
	More generally, we can use the notion of Furstenberg family (See details in \cite{Aki97} and \cite{Fur81}). Let $2^G$ be the power set of $G$. A collection  $\mathcal{F}\subset 2^G$ is called \emph{Furstenberg family} if $\mathcal{F}$ is upper hereditary, that is, $F_1\subset F_2$ and $F_1\in\mathcal{F}$ imply that $F_2\in\mathcal{F}$.
	For a Furstenberg family $\mathcal{F}$, the dynamical system $(X,G,\Phi)$ is called \emph{$\mathcal{F}$-transitive} if for any two nonempty open sets $U,V\subset X$, $N(U,V)\in\mathcal{F}$.
	Let
	$$\mathcal{F}_{\mathrm{inf}}=\{F\subset G:\#F=\infty\},$$
	$$\mathcal{F}_{\mathrm{t}}=\{F\subset G:\text{for any finite subset $S\subset G$, there exists $g\in G$ such that $Sg\subset F$}\},$$
	%$$\mathcal{F}_{\mathrm{s}}=\{F\subset G:\text{there exists finite subset $K\subset G$ such that $KF=G$}\},$$
	$$\mathcal{F}_{\mathrm{cf}}=\{F\subset G:\#(G\setminus F)<\infty\},$$
	
	It is shown that transitive, weakly mixing and mixing is equivalent to $\mathcal{F}_{\mathrm{inf}}$-transitive
	%, $\mathcal{F}_{\mathrm{s}}$-transitive
	, $\mathcal{F}_{\mathrm{t}}$-transitive (if $G$ is abelian)
	and $\mathcal{F}_{\mathrm{cf}}$-transitive, respectively.
	
	If a Furstenberg family $\mathcal{F}\subset \mathcal{F}_{\mathrm{inf}}$ satisfying that $F\setminus A\in\mathcal{F}$ for any $F\in\mathcal{F}$ and any finite subset $A\subset G$, then Theorem \ref{t:1.3} holds when property $P$ is $\mathcal{F}$-transitive.
\end{remark}

In the case of $\N$, that is, a dynamical system $(X,f)$, where $X$ is compact Hausdorff space and $f:X\To X$ is a continuous map. We can also define the hitting time set $N(U,V):=\{n\in\N^*:U\cap f^{-n}(V)\nempty\}$ for two nonempty sets $U,V$ of $X$. We also have similar results of Lemma \ref{l:6.6}, Lemma \ref{l:6.7}, Lemma \ref{l:6.9} and Lemma \ref{l:6.10}. And by \cite[Corollary 14]{GM20},  we have the following corollary.

\begin{corollary}\label{c:6.13}
	Suppose that $f:X\To X$ is a continuous surjection, where $X$ is compact totally disconnected Hausdorff space. Let property $P$ be one of the following properties: transitivity, minimal, totally transitivity, weakly mixing, mixing and specification property. Then $f$ has property $P$ if and only if it is conjugate to the inverse limit of an ML inverse system of subshifts with property $P$.
\end{corollary}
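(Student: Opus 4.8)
The plan is to mirror the structure of the proof of Theorem \ref{t:1.3}, transporting each ingredient from the countable-group setting to the $\N$-action case $(X,f)$. The backbone is the representation of $(X,f)$ as an inverse limit of its orbit spaces: by \cite[Corollary 14]{GM20}, when $X$ is compact totally disconnected Hausdorff and $f$ is a continuous surjection, $(X,f)$ is conjugate to $(\ilim\{\iota,\Or(\U_\lambda)\},\sigma^*)$ where $\{\U_\lambda\}$ ranges over a cofinal directed subset of $\Part(X)$ and each $\Or(\U_\lambda)$ is a subshift (over the monoid $\N$). So the ``only if'' direction reduces to checking that each orbit space $\Or(\U_\lambda)$ inherits property $P$ from $(X,f)$, and the ``if'' direction reduces to checking that property $P$ passes to an inverse limit of an ML inverse system.

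First I would set up the hitting-time machinery: define $N(U,V)=\{n\in\N^*:U\cap f^{-n}(V)\nempty\}$ and restate transitivity, total transitivity, weak mixing, and mixing in terms of $N(U,V)$ being infinite, meeting every cofinal sub-semigroup, and being cofinite, exactly as in the group case; minimality in terms of $N(x,U)$ being syndetic; and carry over the topological formulation of the specification property. Then I would prove the $\N$-analogues of the four structural lemmas. The inverse-limit-preservation statements (analogues of Lemma \ref{l:6.1}, Lemma \ref{l:6.2}, Lemma \ref{l:6.6}, Lemma \ref{l:6.9}) go through verbatim: the only facts used are that $\pi_\lambda^{-1}(U_\lambda)\cap X$ forms a basis, that the ML condition lets one lift a point from $\phi^\gamma_\lambda(X_\gamma)$ to $X$, and that the bonding maps intertwine the dynamics — none of which uses invertibility of $f$. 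The orbit-space-preservation statements (analogues of Lemma \ref{l:6.3}, Lemma \ref{l:6.5}, Lemma \ref{l:6.7}, Lemma \ref{l:6.10}) also go through, with finite subsets $A\subset\N^*$ replacing finite subsets of $G$; for minimality and specification one restricts, as in Lemma \ref{l:6.7} and Lemma \ref{l:6.10}, to $\U\in\Part(X)$ so that cylinder sets over $A$ form a clopen partition and the identity $N(C(P_A)\cap\Or(\U),C(P'_A)\cap\Or(\U))=N(U,V)$ holds on the nose (cf.\ the Remark after Lemma \ref{l:6.3}). Assembling these: given $(X,f)$ with property $P$, Lemma-analogues of \ref{l:6.7} and \ref{l:6.10} make each $\Or(\U_\lambda)$ a subshift with property $P$, and the system is ML since the bonding maps $\iota$ are surjective on orbit spaces; conversely, given such an ML inverse limit, Lemma-analogues of \ref{l:6.6} and \ref{l:6.9} yield property $P$ for the limit.

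The main obstacle I expect is bookkeeping with ``times'' now being a commutative monoid $\N^*$ rather than a group: several manipulations in Lemma \ref{l:6.3} and Lemma \ref{l:6.5} use $A^{-1}A$ and translates $Ah$, and one must check that the relevant arguments only ever need $h$ large (so that $A\cap (A+h)=\emptyset$ is arranged by excluding the finite set $\{a-a':a,a'\in A,\,a\ge a'\}$ from $N(U,V)$) rather than genuine inverses; this is exactly the point where the exclusion of a finite set in the inclusion $N(U,V)\setminus(A^{-1}A)\subset N(\dots)$ gets used, and since all the families in play ($\mathcal{F}_{\mathrm{inf}}$, $\mathcal{F}_{\mathrm{t}}$, $\mathcal{F}_{\mathrm{cf}}$, syndeticity) are insensitive to removing finitely many elements, the argument survives. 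A secondary point worth stating carefully is that in \cite[Corollary 14]{GM20} the orbit space is a one-sided subshift and $\sigma$ is the (noninvertible) shift, so ``subshift with property $P$'' must be read in the $\N$-sense throughout; with that convention fixed, the proof concludes: apply \cite[Corollary 14]{GM20} together with the four lemma-analogues.
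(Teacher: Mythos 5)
Your proposal matches the paper's argument: the paper likewise derives the corollary by invoking \cite[Corollary 14]{GM20} for the conjugacy of $(X,f)$ to an inverse limit of its orbit spaces and then citing $\N$-analogues of Lemma \ref{l:6.6}, Lemma \ref{l:6.7}, Lemma \ref{l:6.9} and Lemma \ref{l:6.10}, exactly the four ingredients you assemble. Your additional care about replacing $A^{-1}A$ by a finite set of differences in the monoid setting is a correct elaboration of a point the paper leaves implicit.
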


\subsection{Proof of Theorem \ref{t:1.4}}
For the metric case, the natural map $\iota$ defined in the case of totally disconnected space can not deduce to the metric case, since there may be two images satisfying $\iota(U)\subset U$.
If we fix $\iota(U)$ for each $U\in\U_n$, the inverse system $(\iota',(\Or(\U_n),G,\sigma))$ where the bonding map $\iota':\Or(\U_{n+1})\To\Or(\U_n)$ is induced by $\iota$ may fail to ensure the ML condition.
Hence we constructed another more suitable subshift which is also induced by finite open covers.

First, we constructed a sequence $\{\V_n\}\subset \FOC(X)$ of finite open covers satisfying that
\begin{itemize}
	\item[(1)] for any $n\in\N$ and any $U\in\V_n$, $\bigcup\{V\in\V_{n+1}:V\subset U\}=U$,
	%$$\bigcup_{V\subset U, V\in\V_{n+1}}V=U,$$
	\item[(2)] for any $n\in\N$, $\V_{n+1}\succ\V_n$, and
	\item[(3)] $\lim_{n\To\infty}\diam(\V_n)=0$.
\end{itemize}
For example, take $\V_0$ be a finite open subcover of $\{B(x,1):x\in X\}$.
Suppose that $\V_n$ is defined, choose a finite open subcover $\V'_{n+1}$ of $\{B(x,2^{-n-1}):x\in X\}$ and take $\V_{n+1}=\V_n\vee \V'_{n+1}$, where $\U\vee\V:=\{U\cap V\nempty:U\in\U,V\in\V\}$. Thus $\{\V_n\}$ satisfies the above conditions.

For each $n,k\in\N$, define
$$\U_n=\{(V_0,V_1,...,V_n)\in\V_1\times\V_2\times\cdots\times\V_n: V_0\supset V_1 \supset\cdots\supset V_n\},$$
and $\iota^{n+k}_n:\U_{n+k}\To\U_n$ by
$$\iota^{n+k}_n((V_0,V_1,...,V_{n+k}))=(V_0,V_1,V_2,...,V_n).$$
It is clear that $\iota^{n+k}_n$ is surjective.

Similar to the case of totally disconnected space, $\U_n$ can also induce a subshift.
For convenience, we defined $\kappa:\U_n\To\V_n$ by $\kappa((V_0,V_1,...,V_n))=V_n$, that is, the projection onto the last coordinate.
Here, we do not distinguish between $\kappa$ for different $n\in\N$.
Define $\Or'(\U_n)$ be the closure of
$$\Or''(\U_n):=\{\{(V^g_0,V^g_1,...,V^g_n)\}_{g\in G}\in\U_n^G:\bigcap_{g\in G}\Phi_{g^{-1}}(V^g_n)\nempty\}.$$
Similar to the orbit space, $\Or'(\U_n)$ is also a subshift of $\U_n^G$, and
$$\mathcal{L}_A(\Or'(\U_n))=\{P_A\in\U_n^A:\bigcap_{g\in A}\Phi_{g^{-1}}(\kappa(P_A(g)))\nempty\}.$$
In fact, $\Or(\V_n)$ is a factor of $\Or'(\U_n)$ by the factor map $\kappa^G:\Or'(\U_n)\To\Or(\V_n)$ induced by $\kappa$, that is, $\kappa^G(\{u_g\}_{g\in G}):=\{\kappa(u_g)\}_{g\in G}$.
%Also, $\kappa^G$ is open map, that is, the image of open set is open.

The connection of hitting time set between $\Or'(\U_n)$ and $\Or(\V_n)$ is given as follows.
\begin{lemma}\label{l:6.14}
	For any finite subset $A\subset G$, and any $P_A,P'_A\in\mathcal{L}_A(\Or'(\U_n))$, we have $$N(C(P_A)\cap\Or'(\U_n),C(P'_A)\cap\Or'(\U_n))=N(C(Q_A)\cap\Or(\V_n),C(Q'_A)\cap\Or(\V_n))$$
	where $Q_A,Q'_A\in\mathcal{L}_A(\Or(\V_n))$,
	$Q_A=\kappa\circ P_A$ and $Q'_A=\kappa\circ P'_A$.
\end{lemma}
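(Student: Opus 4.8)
The plan is to show the two hitting time sets coincide by establishing each inclusion separately, using the factor map $\kappa^G:\Or'(\U_n)\To\Or(\V_n)$ and the nested structure $V_0\supset V_1\supset\cdots\supset V_n$ built into elements of $\U_n$.

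First I would prove $N(C(P_A)\cap\Or'(\U_n),C(P'_A)\cap\Or'(\U_n))\subset N(C(Q_A)\cap\Or(\V_n),C(Q'_A)\cap\Or(\V_n))$. Fix $h$ in the left-hand set, so there is $u\in\Or'(\U_n)$ with $u\in C(P_A)$ and $\sigma_h(u)\in C(P'_A)$. Apply $\kappa^G$: since $\kappa^G$ commutes with $\sigma$ and $\kappa^G(u)\in\Or(\V_n)$, and since $\kappa(u(g))=Q_A(g)$ for $g\in A$ while $\kappa(\sigma_h(u)(g))=\kappa(u(gh))=\kappa((\sigma_h u)(g))=Q'_A(g)$ for $g\in A$, we get $\kappa^G(u)\in C(Q_A)\cap\Or(\V_n)$ and $\sigma_h(\kappa^G(u))=\kappa^G(\sigma_h(u))\in C(Q'_A)\cap\Or(\V_n)$. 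Hence $h$ lies in the right-hand set. This direction is essentially just functoriality of the factor map and should be routine.

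The reverse inclusion is the substantive part. Fix $h$ with some $v\in\Or(\V_n)$ satisfying $v\in C(Q_A)\cap\Or(\V_n)$ and $\sigma_h(v)\in C(Q'_A)\cap\Or(\V_n)$; I want to lift $v$ to a point $u\in\Or'(\U_n)$ that respects both $P_A$ on $A$ and $P'_A$ on $Ah$. Using $v\in\Or(\V_n)$, pick a point $x\in X$ with $\Phi_g(x)\in v(g)$ for all $g$ (this requires first working on $\Or''(\V_n)$ and passing to the closure, exactly as in the proof of Lemma \ref{l:6.3} — one reduces to a point whose orbit realizes the pattern on the relevant finite window $A\cup Ah$, then extends arbitrarily outside). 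Now define $u\in\U_n^G$ coordinatewise: for $g\in A$ set $u(g)=P_A(g)$, for $g\in Ah$ set $u(g)=P'_A(gh^{-1})$, and for all other $g$ pick any element $(V_0^g,\dots,V_n^g)\in\U_n$ with last coordinate $V_n^g$ equal to some member of $\V_n$ containing $\Phi_g(x)$ — here I use condition (2) in the construction of $\{\V_n\}$ to guarantee such nested chains exist and contain $\Phi_g(x)$ in the top coordinate, so that $\Phi_g(x)\in\kappa(u(g))$ for every $g$. One checks this is well defined on $A\cap Ah$ when that set is nonempty: there the constraints $\kappa(P_A(g))=Q_A(g)$ and $\kappa(P'_A(gh^{-1}))=Q'_A(gh^{-1})$ must both equal $\kappa(v(g))$ respectively $\kappa(\sigma_h v)(gh^{-1})$, so the last coordinates match; but matching last coordinates does not force the full tuples to match, which is where the difficulty concentrates.

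The main obstacle I anticipate is precisely this overlap issue: on $g\in A\cap Ah$ the tuple $P_A(g)$ and the tuple $P'_A(gh^{-1})$ could differ in some intermediate coordinate even though their $\kappa$-images agree, making the naive lift ill-defined. I expect the resolution to mirror the totally-disconnected remark after Lemma \ref{l:6.3}: one observes that $h\notin A^{-1}A$ can be arranged when $A$ is first enlarged, or — more in the spirit of this construction — that the decreasing-chain structure means any two tuples with a common late coordinate and common membership of $\Phi_g(x)$ in all their coordinates can be reconciled, since $\Phi_g(x)\in V_i^g$ for all $i$ forces agreement along the chain once one fixes a canonical choice. Once $u$ is consistently defined with $\Phi_g(x)\in\kappa(u(g))$ for all $g$, we get $x\in\bigcap_{g\in G}\Phi_{g^{-1}}(\kappa(u(g)))$, hence $u\in\Or''(\U_n)\subset\Or'(\U_n)$, and by construction $u\in C(P_A)$, $\sigma_h(u)\in C(P'_A)$, so $h$ lies in the left-hand set. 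This completes both inclusions and the proof.
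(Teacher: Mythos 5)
Your first inclusion is correct and is exactly the paper's argument: push $u$ forward through the factor map $\kappa^G$ and observe that the cylinders over $P_A,P'_A$ map into the cylinders over $Q_A,Q'_A$. The substance is entirely in the reverse inclusion, and there you have correctly located the obstruction: at a coordinate $g\in A\cap Ah$ the lift must simultaneously equal $P_A(g)$ and $P'_A(gh^{-1})$, and agreement of the $\kappa$-images (the last entries of the chains) does not force agreement of the full tuples, since a given $V\in\V_n$ may sit inside several distinct chains because each $\V_i$ is a cover, not a partition. However, neither of your proposed repairs closes this gap. Enlarging $A$ only enlarges $A^{-1}A$, so it cannot remove an offending $h$; and the claim that $\Phi_g(x)\in V_i^g$ for all $i$ ``forces agreement along the chain'' is false for the same reason --- a point of a metric space generally lies in more than one element of each $\V_i$, and in any case $P_A(a)$ and $P'_A(a')$ are given data that you are not free to re-choose canonically.

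In fact the difficulty you found is a defect of the stated equality, not merely of your argument: for $h=a'^{-1}a\in A^{-1}A$, membership of $h$ in the left-hand set forces $P_A(a)=P'_A(a')$ as full tuples, whereas membership in the right-hand set only forces $\kappa(P_A(a))=\kappa(P'_A(a'))$, so choosing two distinct chains with the same last entry produces an $h$ in the right-hand set but not the left. The paper's own proof establishes only your first inclusion and dismisses the converse with ``the converse is similar,'' so it does not resolve this either. The correct statement, parallel to Lemma \ref{l:6.3}, is
$$N(C(Q_A)\cap\Or(\V_n),C(Q'_A)\cap\Or(\V_n))\setminus(A^{-1}A)\subset N(C(P_A)\cap\Or'(\U_n),C(P'_A)\cap\Or'(\U_n)),$$
together with the unrestricted forward inclusion; for $h\notin A^{-1}A$ your lifting construction goes through verbatim, since $A$ and $Ah$ are then disjoint, the lift $u$ is well defined, and $x\in\bigcap_{g\in G}\Phi_{g^{-1}}(\kappa(u(g)))$ places $u$ in $\Or''(\U_n)\subset\Or'(\U_n)$. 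Since the two hitting sets then differ only within the finite set $A^{-1}A$, this weaker version still suffices for the application in Theorem \ref{t:1.4}, where only transitivity-type properties insensitive to finite modifications of hitting sets are transferred.
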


\begin{proof}
	Let $Q_A,Q'_A$ be defined as in the lemma.
	Since $\bigcap_{g\in A}\Phi_{g^{-1}}(\kappa(P_A(g)))\nempty$,
	$Q_A\in\mathcal{L}_A(\Or(\V_n))$.
	Also, $Q'_A\in\mathcal{L}_A(\Or(\V_n))$.
	
	Fix any $h\in N(C(P_A)\cap\Or'(\U_n),C(P'_A)\cap\Or'(\U_n))$.
	Then there exists $u=\{u_g\}_{g\in G}\in\Or'(\U_n)$ such that $u\in C(P_A)\cap\Or'(\U_n)$ and $\sigma_h(u)\in C(P'_A)\cap\Or'(\U_n)$.
	For some $a,a'\in A$ with $a=a'h$, notice that $P_A(a)=P'_A(a')$ since $\pi_a(u)=\pi_{a'h}(u)=\pi_{a'}(\sigma_h(u))$.
	Thus $$\bigcap_{a\in A}\Phi_{a^{-1}}(\kappa(P_A(a)))\cap\Phi_{{(ah)}^{-1}}(\kappa(P'_A(a)))\nempty.$$
	Let $v=\kappa^G(u)$. So $v\in C(Q_A)\cap\Or(\V_n)$ and $\sigma_hv\in C(Q'_A)\cap\Or(\V_n)$, which implies that $h\in N(C(Q_A)\cap\Or(\V_n),C(Q'_A)\cap\Or(\V_n))$.
	The converse is similar.
\end{proof}

Now, we will define a bonding map $\iota'$ to construct an inverse system which consists of $\Or'(\U_n)$.
Notice that $\kappa(\iota^{n+k}_n(u))\supset \kappa(u)$ for any $u\in\U_{n+k}$.
Then for any $\{u_g\}_{g\in G}\in\Or''(\U_{n+k})$, we have
$$\bigcap_{g\in G}\Phi_{g^{-1}}(\kappa(\iota^{n+k}_n(u_g)))\supset\bigcap_{g\in G}\Phi_{g^{-1}}(\kappa(u_g))\nempty,$$
which implies that $\{\iota^{n+k}_n(u_g)\}_{g\in G}\in\Or''(\U_n)$.
Thus the map $\iota^{n+k}_n$ induces the map $\iota':\Or'(\U_{n+k})\To\Or'(\U_n)$ by $$\iota'(\{u_g\}_{g\in G})=\{\iota^{n+k}_n(u_g)\}_{g\in G}.$$
It is clear that $\iota'$ is continuous and commutes with $\sigma$.
\begin{lemma}
	For any $n,k\in\N$, the map $\iota':\Or'(\U_{n+k})\To\Or'(\U_n)$ is surjective.
\end{lemma}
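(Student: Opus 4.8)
The plan is to prove surjectivity first on the dense subset $\Or''(\U_{n+k})$ of $\Or'(\U_{n+k})$ and then pass to the closure. Since $\U_{n+k}$ is a finite set and $G$ is countable, $\U_{n+k}^G$ is compact metrizable, so $\Or'(\U_{n+k})$ is compact and, by continuity of $\iota'$, $\iota'(\Or'(\U_{n+k}))$ is a closed subset of $\Or'(\U_n)$. Hence it suffices to show $\iota'(\Or''(\U_{n+k}))\supset\Or''(\U_n)$, because $\Or''(\U_n)$ is dense in $\Or'(\U_n)$.

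So fix $w=\{w_g\}_{g\in G}\in\Or''(\U_n)$ and write $w_g=(V^g_0,V^g_1,\dots,V^g_n)$ with $V^g_0\supset\dots\supset V^g_n$. By definition of $\Or''(\U_n)$ there is a point $x\in\bigcap_{g\in G}\Phi_{g^{-1}}(\kappa(w_g))$, i.e. $\Phi_g(x)\in V^g_n$ for every $g\in G$. I would now use property (1) of the sequence $\{\V_m\}$ repeatedly: given $\Phi_g(x)\in V^g_n\in\V_n$, property (1) provides some $V^g_{n+1}\in\V_{n+1}$ with $V^g_{n+1}\subset V^g_n$ and $\Phi_g(x)\in V^g_{n+1}$; iterating $k$ times yields a chain $V^g_n\supset V^g_{n+1}\supset\dots\supset V^g_{n+k}$ with $V^g_{n+j}\in\V_{n+j}$ and $\Phi_g(x)\in V^g_{n+j}$ for $1\le j\le k$. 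Setting $u_g=(V^g_0,\dots,V^g_n,V^g_{n+1},\dots,V^g_{n+k})\in\U_{n+k}$ gives $u=\{u_g\}_{g\in G}$ with $\iota^{n+k}_n(u_g)=w_g$, hence $\iota'(u)=w$; moreover $\kappa(u_g)=V^g_{n+k}\ni\Phi_g(x)$ for all $g$, so $x\in\bigcap_{g\in G}\Phi_{g^{-1}}(\kappa(u_g))$ and $u\in\Or''(\U_{n+k})$. This gives the required dense inclusion, and hence the lemma.

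I do not expect a genuine obstacle. The substance of the argument is simply that $\iota^{n+k}_n$ is surjective at the level of alphabets and that condition (1) on $\{\V_m\}$ allows one to extend an admissible decreasing chain downward while keeping a prescribed point inside every newly appended set, so that the orbit-space condition $\bigcap_{g\in G}\Phi_{g^{-1}}(\kappa(\cdot))\nempty$ is preserved under the extension. The only point needing a little care is that $\Or'(\U_{n+k})$ is defined as a closure rather than as $\Or''(\U_{n+k})$ itself; this is handled by the compactness of $\U_{n+k}^G$ together with the continuity of $\iota'$ already recorded (equivalently: lift a convergent sequence $w^{(i)}\to w$ with $w^{(i)}\in\Or''(\U_n)$ to $u^{(i)}\in\Or''(\U_{n+k})$, extract a convergent subsequence $u^{(i_l)}\to u\in\Or'(\U_{n+k})$, and note $\iota'(u)=\lim_l\iota'(u^{(i_l)})=\lim_l w^{(i_l)}=w$).
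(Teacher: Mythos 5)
Your proof is correct and follows essentially the same route as the paper: pick a point $x$ witnessing $w\in\Or''(\U_n)$ and use property (1) of $\{\V_m\}$ to extend each decreasing chain downward while keeping $\Phi_g(x)$ inside the appended sets, so the lift stays in $\Or''(\U_{n+k})$. The only difference is cosmetic — you iterate the extension $k$ times at once and spell out the compactness/density argument for passing from $\Or''$ to the closure $\Or'$, both of which the paper compresses into "without loss of generality".
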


\begin{proof}
	Without loss of generality, we only prove that for any $\{u^n_g\}_{g\in G}\in\Or''(\U_n)$, there exists $\{u^{n+1}_g\}_{g\in G}\in\Or''(\U_{n+1})$ such that $\iota'(\{u^{n+1}_g\}_{g\in G})=\{u^n_g\}_{g\in G}$.
	
	Fix any $\{u^n_g\}_{g\in G}\in\Or''(\U_n)$.
	Take $x\in\bigcap_{g\in G}\Phi_{g^{-1}}(\kappa(u^n_g))$.
	Then by the construction of $\{\V_n\}$, for each $g\in G$, there exists $V_g\in\V_{n+1}$ with $V_g\subset\kappa(u^n_g)$ such that $\Phi_g(x)\in V_g$. Let $u^{n+1}_g\in\U_{n+1}$ satisfy $\iota^{n+1}_n(u^{n+1}_g)=u^n_g$ and $\kappa(u^{n+1}_g)=V_g$. Since $x\in\bigcap_{g\in G}\Phi_{g^{-1}}(V_g)$, we have $\{u^{n+1}_g\}_{g\in G}\in\Or''(\U_{n+1})$. And since $\iota^{n+1}_n(u^{n+1}_g)=u^n_g$, it is clear that $\iota'(\{u^{n+1}_g\}_{g\in G})=\{u^n_g\}_{g\in G}$, which ends the proof.
\end{proof}

So we can conclude that $(\iota',(\Or'(\U_n),G,\sigma))$ is an ML inverse system.
\iffalse
Here, we recall some basic lemma for a factor map.
\begin{lemma}
	For two dynamical systems $(X,G,\Phi^X)$ and $(Y,G,\Phi^Y)$, let $\phi$ be a factor map from $(X,G,\Phi^X)$ to $(Y,G,\Phi^Y)$.
	Then for any two nonempty open sets $U,V\subset Y$, $N(\phi^{-1}(U),\phi^{-1}(V))=N(U,V)$.
\end{lemma}

\begin{proof}
	Fix any two nonempty open sets $U,V\subset Y$.
	For any $g\in N(\phi^{-1}(U),\phi^{-1}(V))$, there exists $x\in X$ such that $x\in\phi^{-1}(U)$ and $\Phi^X_g(x)\in\phi^{-1}(V)$.
	Thus $\phi(x)\in U$ and $\Phi^X_g(\phi(x))\in V$, which implies that $g\in N(U,V)$.
	
	For the converse, fix any $g\in N(U,V)$.
	Then there exists $y\in Y$ such that $y\in U$ and $\Phi^Y_g(y)\in V$.
	Take $x\in\phi^{-1}(y)$.
	Thus $x\in\phi^{-1}(U)$ and $\Phi^X_g(x)\in\phi^{-1}(V)$.
\end{proof}

So factor map preserves some dynamical properties.
\begin{corollary}
	For two dynamical systems $(X,G,\Phi^X)$ and $(Y,G,\Phi^Y)$, let $\phi$ be a factor map from $(X,G,\Phi^X)$ to $(Y,G,\Phi^Y)$. If $(X,G,\Phi)$ is transitive (totally transitive, weakly mixing, or mixing), then $(Y,G,\Phi^Y)$ is transitive (totally transitive, weakly mixing, or mixing).
\end{corollary}
\fi
Thus we give the proof of Theorem \ref{t:1.4}.

%\begin{theorem}\label{t:7.14}
	%Suppose that $(X,G,\Phi)$ is a dynamical system, where $X$ is compact metric space. Let property $P$ be one of the following properties: transitivity, totally transitivity, weakly mixing, and mixing. Then $\Phi$ has property $P$ if and only if $(X,G,\Phi)$ is a factor of the inverse limit of an ML inverse system of subshifts with property $P$.
%\end{theorem}

\begin{proof}[Proof of Theorem \ref{t:1.4}]
	Let $\V_n$ and $\U_n$ be defined as above.
	By Lemma \ref{l:6.7} and Lemma \ref{l:6.14}, $(\Or'(\U_n),G,\sigma)$ also has property $P$.
	Therefore, by Lemma \ref{l:6.1}, the inverse limit of $(\iota',(\Or'(\U_n),G,\sigma))$ also has property $P$.
	Define $\phi:\ilim\{\iota',\Or'(\U_n)\}\To X$ satisfy
	$$\{\phi(\{u_i\}_{i\in\N})\}=\bigcap_{i\in\N}\overline{\pi_{e_G}(\kappa(u_i))}.$$
	Notice that $\pi_{e_G}(\kappa(u_{i+1}))\subset \pi_{e_G}(\kappa(u_i))$ and $\diam(\U_i)\To 0$, which implies that $\phi$ is well defined.
	It can be seen that $\phi$ is continuous, and $\phi\circ\sigma^*_g=\Phi_g\circ\phi$.
	
	We claim that $\phi$ is surjective.
	For any $x\in X$, let
	%$O_i(x)\subset \Or'(\U_i)$ be the subset of all orbit patterns for $x$.
	$$O_i(x)=\{\{u_g\}_{g\in G}\in\Or'(\U_i):x\in\bigcap_{g\in G}\Phi_{g^{-1}}(\kappa(u_g))\}$$
	Let $O(x)=\ilim\{\iota',\Or'(\U_n)\}\cap\bigcap_{i\in\N}\pi_i^{-1}\left(\overline{O_i(x)}\right)$.
	Thus $O(x)$ is nonempty, and $\phi(O(x))=\{x\}$, which implies that $\phi$ is surjective.
	Therefore, $(X,G,\Phi)$ is a factor of the inverse limit of an ML inverse system of subshifts with property $P$.
	
	For the converse, denote by $(X^*,G,\sigma^*)$ the inverse limit in theorem, and by Lemma \ref{l:6.6}, $(X^*,G,\sigma^*)$ has property $P$.
	For a factor map $\phi$ from $X^*$ to $X$, it is clear that $N(U,V)=N(\phi^{-1}(U),\phi^{-1}(V))$ for any two nonempty open subsets $U,V\subset X$.
	Thus $(X,G,\Phi)$ has property $P$.
\end{proof}

\begin{remark}
	Similar with Theorem \ref{t:1.3}, Theorem \ref{t:1.4} holds when property $P$ is $\mathcal{F}$-transitive, where  $\mathcal{F}\subset\mathcal{F}_{\mathrm{inf}}$ is a Furstenberg family satisfying that $F\setminus A\in\mathcal{F}$ for any $F\in\mathcal{F}$ and any finite subset $A\subset G$.
\end{remark}

Similar to Corollary \ref{c:6.13}, we have the following corollary.

\begin{corollary}
	Suppose that $f:X\To X$ is a continuous surjection, where $X$ is a compact metric space. Let property $P$ be one of the following properties: transitivity, totally transitivity, weakly mixing, mixing. Then $f$ has property $P$ if and only if it is conjugate to the inverse limit of an ML inverse system of subshifts with property $P$.
\end{corollary}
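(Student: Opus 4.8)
The plan is to prove this corollary exactly as its totally disconnected counterpart Corollary~\ref{c:6.13}, by transcribing the Section~6 machinery from a countable group action $(X,G,\Phi)$ to the single continuous surjection $(X,f)$ and then coding $(X,f)$ as an inverse limit of orbit-space subshifts. Throughout I would use the hitting time set $N(U,V)=\{n\in\N^*:U\cap f^{-n}(V)\nempty\}$ introduced just before Corollary~\ref{c:6.13}, and I recall that each of the four listed properties is $\mathcal{F}$-transitivity for a Furstenberg family contained in $\mathcal{F}_{\mathrm{inf}}$ that is closed under deleting finite subsets of $G$.

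First I would establish the two $\N$-analogue transfer lemmas. The argument of Lemma~\ref{l:6.1} transcribes verbatim: for an ML inverse limit $(X^*,\sigma^*)$ of subshifts, every pair of nonempty open subsets of $X^*$ pulls back, at all sufficiently large levels, to nonempty open sets whose hitting time set is contained in $N(U,V)$; together with the product and diagonal identifications used before Lemma~\ref{l:6.6} this yields the $\N$-version of Lemma~\ref{l:6.6}, so that an ML inverse limit of subshifts each having property $P$ again has property $P$. Dually, the argument of Lemma~\ref{l:6.3} transcribes to give the $\N$-version of Lemma~\ref{l:6.7}: for $\U\in\FOC(X)$ the orbit-space subshift $\Or(\U)$ inherits transitivity, total transitivity, weak mixing and mixing from $(X,f)$, because $N(C(P_A)\cap\Or(\U),C(P'_A)\cap\Or(\U))$ and the corresponding $N(U,V)$ differ only by the finite set $A^{-1}A$, which lies outside each of the four families.

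Next I would realize $(X,f)$ as the required inverse limit, following the construction in the proof of Theorem~\ref{t:1.4}. Choose a refining sequence $\{\V_n\}\subset\FOC(X)$ with the nesting property and $\diam(\V_n)\To0$, form the finer subshifts $\Or'(\U_n)$ together with the surjective bonding maps $\iota'$, so that $(\iota',(\Or'(\U_n),\N,\sigma))$ is an ML inverse system; by Lemma~\ref{l:6.14} and the $\N$-version of Lemma~\ref{l:6.7} each $\Or'(\U_n)$ has property $P$, and by the $\N$-version of Lemma~\ref{l:6.6} the inverse limit has property $P$ too. The coding map sending $\{u_i\}$ to the point of $\bigcap_{i}\overline{\pi_{e_G}(\kappa(u_i))}$ is continuous, surjective, and intertwines $\sigma^*$ with $f$, giving a semiconjugacy; to reach the \emph{conjugacy} asserted in the statement I would then prove this coding map injective. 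For the converse, if $(X,f)$ is conjugate to such an inverse limit, the $\N$-version of Lemma~\ref{l:6.6} places property $P$ on the inverse limit and the conjugacy transports it to $(X,f)$.

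The main obstacle is precisely this injectivity, that is, upgrading the semiconjugacy to a conjugacy. By the $\N$-analogue of Theorem~\ref{t:3.7} each coordinate intersection $\bigcap_{i}\overline{\pi_n(\kappa(u_i))}$ equals $\{f^n(x)\}$, so the image $x$ already determines the full orbit $(f^n(x))_{n}$; injectivity therefore reduces to each pattern entry $\pi_n(u_i)$ being the \emph{unique} cover element through $f^n(x)$. In Corollary~\ref{c:6.13} one works with clopen \emph{partitions}, so $f^n(x)$ lies in a single coordinate set and the analogue of Corollary~\ref{c:4.2} already delivers a bijective coding map; when the elements of $\V_n$ merely overlap, distinct inverse-limit points sharing the same orbit collapse to one point of $X$, and the inverse limit (being totally disconnected) cannot be homeomorphic to a positive-dimensional $X$. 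I would thus isolate this uniqueness of coordinate membership as the crux: it holds exactly when the clopen-partition refinement of Corollary~\ref{c:6.13} is available, in which case the scheme above gives the conjugacy, and this is the hypothesis the statement must carry for the conjugacy (rather than only the factor relation of the $\N$-analogue of Theorem~\ref{t:1.4}) to hold.
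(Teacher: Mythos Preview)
Your approach is essentially the paper's own: transcribe the Section~6 machinery (Lemmas~\ref{l:6.1}, \ref{l:6.3}, \ref{l:6.6}, \ref{l:6.7}, \ref{l:6.14}) to the $\N$ case and run the construction from the proof of Theorem~\ref{t:1.4}. The paper gives no separate argument for this corollary beyond the line ``Similar to Corollary~\ref{c:6.13}, we have the following corollary,'' so on the level of method you are aligned with it.

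You have, however, correctly put your finger on a genuine issue with the \emph{statement} rather than with your proof. The corollary as printed says ``conjugate,'' but the group-action version it parallels, Theorem~\ref{t:1.4}, says ``factor.'' Your observation is decisive: an inverse limit of subshifts is zero-dimensional, so no positive-dimensional compact metric space $X$ can be \emph{conjugate} to such an inverse limit, and the coding map $\{u_i\}\mapsto\bigcap_i\overline{\pi_{e_G}(\kappa(u_i))}$ constructed in the proof of Theorem~\ref{t:1.4} is only a factor map in general. The word ``conjugate'' in the corollary is therefore a slip for ``factor''; with that correction your outline goes through exactly as written, and your discussion of why injectivity fails (overlapping cover elements versus clopen partitions) is the right diagnosis. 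There is no missing idea on your side --- the gap lies in the corollary's wording, not in your argument.
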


%For a factor map $\phi$ from $(X,G,\Phi^X)$ to $(Y,G,\Phi^Y)$, notice that $N(\phi^{-1}(U),\phi^{-1}(V))\subset N(U,V)$ for any two nonempty open sets $U,V\subset Y$. Combining with Theorem \ref{t:3.7}, we have the following result.

\section*{Acknowledgements}
The second author was supported by NNSF of China (11671208 and 11431012). We would like to express our gratitude to Tianyuan Mathematical Center in Southwest China, Sichuan University and Southwest Jiaotong University for their support and hospitality.

\end{document}